\documentclass[ twocolumn,
 amsmath,amssymb,
 aps, pre, amsthm, amsfonts,
]{revtex4-2}

\usepackage{amsthm}

\newtheorem{theorem}{Theorem}
\newtheorem{lemma}{Lemma}

\usepackage{bbm}

\usepackage{graphicx,caption}%

\usepackage{dcolumn}
\usepackage{bm}
\usepackage{hyperref}

\usepackage{mathtools,enumitem}

\usepackage{csquotes}
\usepackage{xcolor}
\usepackage{todonotes}

\RequirePackage{subcaption}

\DeclareMathOperator{\EX}{\mathbb{E}}

\newcommand{\e}{\mathrm{e}}
\newcommand{\prob}{\mathbb{P}}
\newcommand{\nn}{\nonumber}

\newcommand{\sss}{\scriptscriptstyle}
\newcommand{\convd}{\stackrel{d}{\longrightarrow}}

\begin{document}

\title{Clustering without geometry in sparse networks with independent edges}

\date{\today}

\author{Alessio Catanzaro$^{1,2,3}$}
\author{Remco van der Hofstad$^4$}
\author{Diego Garlaschelli$^{1,2}$}

\affiliation{$^1$IMT School for Advanced Studies, Lucca, Italy}
\affiliation{$^2$Instituut-Lorentz for Theoretical Physics, {Leiden University, The Netherlands}}
\affiliation{$^3$Universit\`a di Palermo, Palermo, Italy
}
\affiliation{$^4$Department of Mathematics and Computer Science, Eindhoven University of Technology, The
Netherlands.
}%

\begin{abstract}
The coexistence of sparsity and clustering (non-vanishing average fraction of triangles per node) is one of the few structural features that, irrespective of finer details, are ubiquitously observed across large real-world networks. This fact calls for generic models  producing sparse clustered graphs. 
Earlier results suggested that sparse random graphs with independent edges fail to reproduce clustering, unless edge probabilities are assumed to depend on underlying metric distances that, thanks to the triangle inequality, naturally favour triadic closure. 
This observation has opened a debate on whether clustering implies (latent) geometry in real-world networks. 
Alternatively, recent models of higher-order networks can replicate clustering by abandoning edge independence.
In this paper, we mathematically prove, and numerically confirm, that a sparse random graph with independent edges, recently identified in the context of network renormalization as an invariant model under node aggregation, produces finite clustering without any geometric or higher-order constraint.
The underlying mechanism is an infinite-mean node fitness, which also implies a power-law degree distribution. Further, as a novel phenomenon that we characterize rigorously, we observe the breakdown of self-averaging of various network properties.
Therefore, as an alternative to geometry or higher-order dependencies, node aggregation invariance emerges as a basic route to realistic network properties.
\end{abstract}

\maketitle

\paragraph*{Introduction: clustering versus geometry.}

While real-world networks differ in various structural details, the vast majority of them are characterized by virtually ubiquitous features, the most popular of which are the broad (i.e., infinite-variance) distribution of node degrees (numbers of links per node), the \emph{(ultra)small-world} property (i.e., average shortest path length growing (at most) logarithmically with the number of nodes), and the coexistence of \emph{sparsity} (i.e., vanishing link density) and \emph{local clustering} (i.e., finite average density of triangles around nodes) in the limit of infinitely large networks~\cite{caldarelli2007scale,newman2018networks}. 
While several models based on different generic mechanisms can successfully replicate broad degree distributions and short path lengths~\cite{newman2018networks,van2016random}, the coexistence of sparsity and local clustering is much more difficult to replicate -- and hence more informative, as it requires specific mechanisms that can discriminate models at a finer level. 

The prototypical example of this difficulty is illustrated by \emph{random graph models with independent edges}, where different pairs of nodes are independently connected.
In the Erd\H{o}s-R\'enyi (ER) model~\cite{erdHos1960evolution}, where edges are independent and identically distributed (i.i.d.) with the same connection probability $p$, it is easy to show that the link density (expected fraction of realized links) and the local clustering coefficient (defined as the fraction of triangles around a node, averaged over all nodes) have the same expected value $p$ for networks with a large number $n$ of nodes. 
This automatically implies that, as $n$ diverges, one cannot simultaneously produce finite local clustering and vanishing link density. 
To overcome this limitation of the ER model, 
two main routes have been explored. 

One route introduces dependencies between edges. This can happen via the addition of explicit triadic interactions in an Exponential Random Graph setting~\cite{newman2003properties,newman2009random} (which may however still lead to comparable levels of density and clustering), the random superposition of small subgraphs~\cite{bollobas2011sparse} or the monopartite projection~\footnote{In monopartite projections of bipartite networks, the original nodes are first initially connected (possibly independently of each other) to `auxiliary’ nodes of a different type (representing for instance the possibile affiliations or memberships of the original nodes), and the auxiliary nodes are then eliminated while connecting the original nodes to each other, if they share connections to the same auxiliary nodes.} of random bipartite networks~\cite{newman2003social} -- both producing networks of overlapping cliques (the edges of which are all mutually dependent) where local clustering can remain very large even for vanishing density. 
More recent attempts generalize these ideas to higher-order networks such as simplicial complexes and hypergraphs~\cite{battiston2022higher,boccaletti2023structure}, where edges belonging to the same simplex or hyperedge are mutually dependent. 
While these more complicated settings can realize the coexistence of sparsity and clustering at the expense of introducing higher-order dependencies, whether that coexistence can be achieved for more parsimonious models with independent edges remains an open question.

The second route explores that question explicitly by preserving edge independence while introducing heterogeneity. 
More precisely, one can turn the \emph{unconditional} link independence of the ER model into a \emph{conditional} one, by replacing, for each specific pair of nodes $i$ and $j$ ($i,j=1,\dots,n$), the constant connection probability $p$ with a probability $p_{ij}$ that is a function $f(w_i,w_j)$ of the realized values $w_i$, $w_j$ of a certain node-specific variable $w$ (called \emph{fitness} or weight, hidden variable, latent variable, Lagrange multiplier, etc.) that is preliminarily assigned to vertices via an i.i.d.\ draw from a certain probability density function (pdf) $\rho(w)$~\cite{caldarelli2002scale,van2016random, boguna2003class,squartini2017maximum}.
Typical choices of $f(w_i,w_j)$ are monotonically increasing in both arguments (and necessarily symmetric for undirected graphs) -- implementing the idea that, the larger the fitness, the larger the probability that a node connects to other nodes~\footnote{Some popular examples are  $f(w_i,w_j)=zw_i w_j$, $f(w_i,w_j)=\Theta(w_i+w_j-z)$, $f(w_i,w_j)=zw_i w_j/(1+zw_i w_j)$, and $f(w_i,w_j)=1-\e^{-zw_i w_j}$, where $z>0$ is a global parameter controlling for the overall link density and $w_i\ge0$ $\forall i$.}.

In the studies carried out so far, it turned out that, while several combinations of $f$ and $\rho$ can generate sparse small-world networks with a broad degree distribution~\cite{caldarelli2002scale,van2016random, boguna2003class,squartini2017maximum}, they systematically fail to generate (positive) clustering, casting doubts on whether models with independent edges can replicate local clustering. 
The only exception that has become popular is that of \emph{random geometric graphs}, where the fitness $w$ acquires the role of a \emph{node coordinate} (which, more generally, can be a $D$-dimensional vector), and the connection function $f$ depends on $w_i$ and $w_j$ through some \emph{metric distance} $d_{ij}=d(w_i,w_j)$ (in which case, $f$ generally loses the property of being monotonic in $w_i$~\footnote{For instance, if $w_i$ and $w_j$ are scalar (one-dimensional) coordinates and their distance is defined as $d_{ij}=|w_i-w_j|$, then $w_i$ and $w_j$ can simultaneously increase while keeping $d_{ij}$ (and hence $p_{ij}$) unchanged.}).
A remarkable example of this success is the \emph{hyperbolic model}~\cite{krioukov2016clustering, aliakbarisani2025clustering, serrano2006clusteringI, serrano2006clusteringII, boguna2021network, newman2009random, krioukov2010hyperbolic, michielan2022detecting}, which assumes that nodes are sprinkled uniformly at random in a $D$-dimensional hyperbolic space and links are formed with a probability that is a certain decreasing function of the geodesic distance between nodes.
In this setting, the triangle inequality ($d_{ij}\le d_{ik}+d_{jk}$ $\forall i,j,k$) guarantees that pairs of nodes $i,j$ that successfully connect to a third node $k$ are very likely to connect to each other (\emph{triadic closure}), provably leading to positive clustering~\cite{allard2024geometric, candellero2014clustering}.
Additionally, hyperbolicity ensures that other desirable properties such as power-law degree distributions emerge~\cite{krioukov2010hyperbolic}.
These investigations have convincingly concluded that, in random graphs with conditionally independent edges and distance-dependent connection probabilities, {\em geometry implies clustering}.

On the other hand, the potential reverse (non-obvious) implication that \emph{clustering implies geometry}, i.e., that the presence of finite clustering could suffice to indicate that the network is (with high probability) the realization of a random geometric graph, is highly debated~\cite{krioukov2016clustering, aliakbarisani2025clustering, serrano2006clusteringI, serrano2006clusteringII, boguna2021network, newman2009random, krioukov2010hyperbolic, michielan2022detecting}.
Whether clustering implies geometry is crucial for network theory (because, if true, it would demand for explanations and interpretations of the hidden geometry of real-world networks~\cite{allard2024geometric, boguna2021network}), for data science (for instance because techniques such as graph embedding and topological data analysis could then be optimized for real-world network geometries), and for the control of processes on networks (because geometry could aid network navigability~\cite{krioukov2010hyperbolic,boguna2009navigability,boguna2010sustaining}, and the understanding of how clustering impacts epidemic and other processes~\cite{newman2003properties}). 

In this Letter, we prove that positive clustering can emerge {\em independently} of geometry or higher-order constraints in random graphs with independent edges conditional on infinite-mean fitness~\cite{garuccio2023multiscale, lalli2024geometry}. While the finite-mean case has been systematically found to produce vanishing clustering, we show that in the infinite-mean case clustering is positive and strong. 
Notably, we find that the infinite-mean fitness also implies the breakdown of self-averaging in certain properties, including clustering itself.

\paragraph*{The Multi-Scale Model.}  
We consider the inhomogeneous random graph model introduced in~\cite{garuccio2023multiscale} and studied in~\cite{avena2022inhomogeneous,lalli2024geometry,  catanzaro2025spectra}. 
Each node $i$ is assigned a positive weight (fitness) $w_i$ ($i=1,\ldots, n$), sampled i.i.d.\ from a certain pdf $\rho(w)$. 
Given $w_i,w_j$, nodes $i,j$ connect with probability 
\begin{equation}
    p_{ij}=1-\e^{-\delta_n w_i w_j}, \label{eq:pizza}
\end{equation}
where the scaling $\delta_n$ can be chosen in such a way that the link density vanishes as $n\to\infty$.
The connection probability~\eqref{eq:pizza} is also known in the literature as the Norros–Reittu model~\cite{norros2006conditionally} and has been used for various purposes~\cite{rodgers2005eigenvalue,bhamidi2010scaling}, by considering a finite-mean $\rho(w)$. 
In this work we are however interested in the specific `annealed' regime considered within the framework of network renormalization~\cite{gabrielli2025network}, where the model emerges as a fixed point of a renormalization flow in the space of models, induced by coarse-graining the network by aggregating nodes into super-nodes and recalculating the induced connection probability between super-nodes~\cite{garuccio2023multiscale}. 
The specific connection rule in~\eqref{eq:pizza} makes the model invariant, provided the weight of each block is defined as (a random variable equal to) the sum of the weights of the constituent nodes~\cite{garuccio2023multiscale}. 
If one further demands the invariance of the weight distribution, i.e., that the coarse-grained weights are distributed according to the same pdf as the constituent weights, then $\rho(w)$ should be a positively-supported, one-sided $\alpha$-stable law $\tilde{\rho}_\alpha(w)$ with stability parameter $0<\alpha<1$~\cite{samorodnitsky1994stable,Nola20} and tails decaying as $w^{-1-\alpha}$ for large $w$, hence with diverging mean and higher moments~\cite{garuccio2023multiscale,avena2022inhomogeneous}.
In this way, one gets a Multi-Scale Model (MSM) interpretable as describing connections between arbitrarily nested (blocks of) nodes, thereby describing the graph at multiple scales of resolution (coarse-graining) simultaneously~\cite{garuccio2023multiscale}.

Since a stable law $\tilde{\rho}_\alpha(w)$ with $0<\alpha<1$ is known in closed form only for $\alpha=\tfrac{1}{2}$ (L\'evy distribution), we first follow~\cite{avena2022inhomogeneous} and replace it with a pure Pareto distribution
\begin{equation}
    \rho_\alpha(w) = \alpha w^{-1-\alpha}, \quad w\ge1, \; 0<\alpha<1, \label{eq:pareto}
\end{equation}
(which has exactly the same tail behaviour as an $\alpha$-stable distribution and serves as an analytically tractable counterpart~\cite{samorodnitsky1994stable,avena2022inhomogeneous}) and then discuss the extension to actual $\alpha$-stable laws $\tilde{\rho}_\alpha(w)$ via both analytical and numerical arguments.
Given~\eqref{eq:pareto}, one can show that choosing $\delta_n=n^{-1/\alpha}$ implies that the link density vanishes as
$\log n/n$, i.e., the average degree grows only as $\log n$, and the degree distribution is power-law~\cite{avena2022inhomogeneous}.

We stress that the MSM can be enriched with additional dyadic features~\cite{garuccio2023multiscale,lalli2024geometry} which allow the connection probabilities to depend, for instance, on geometric distances between coordinates assigned to nodes. Nonetheless, to investigate whether clustering can be produced \emph{without} geometry, here we obviously consider the simplest, distance-independent version in~\eqref{eq:pizza}.

\begin{figure*}[bt!]
    \includegraphics[width=0.32\linewidth, height= 0.18\linewidth]{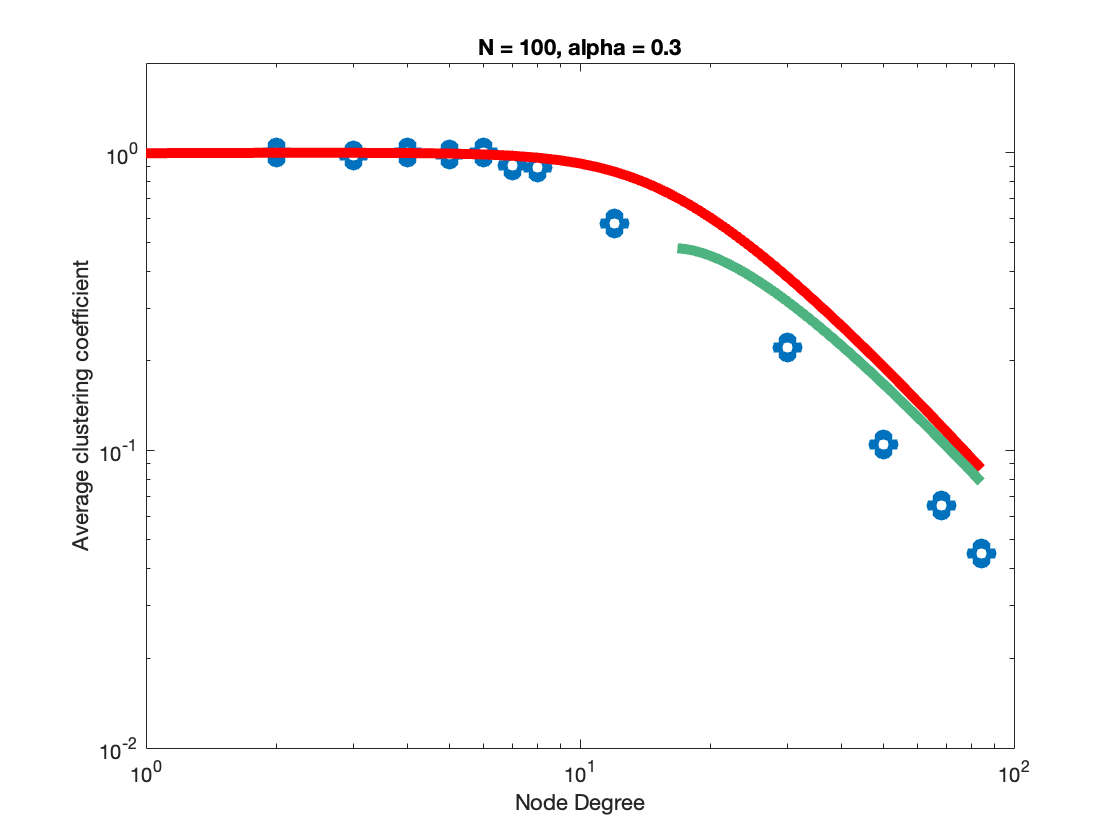} 
    \includegraphics[width=0.32\linewidth, height= 0.18\linewidth]{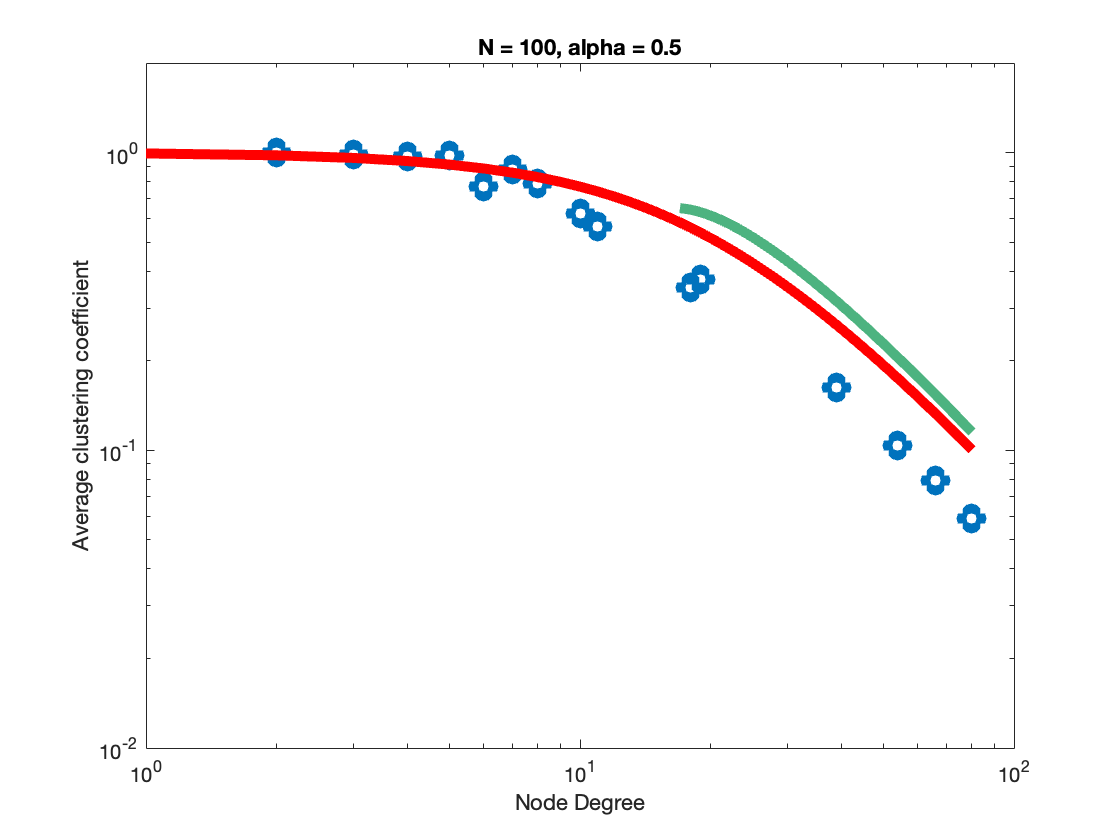}
    \includegraphics[width=0.32\linewidth, height= 0.18\linewidth]{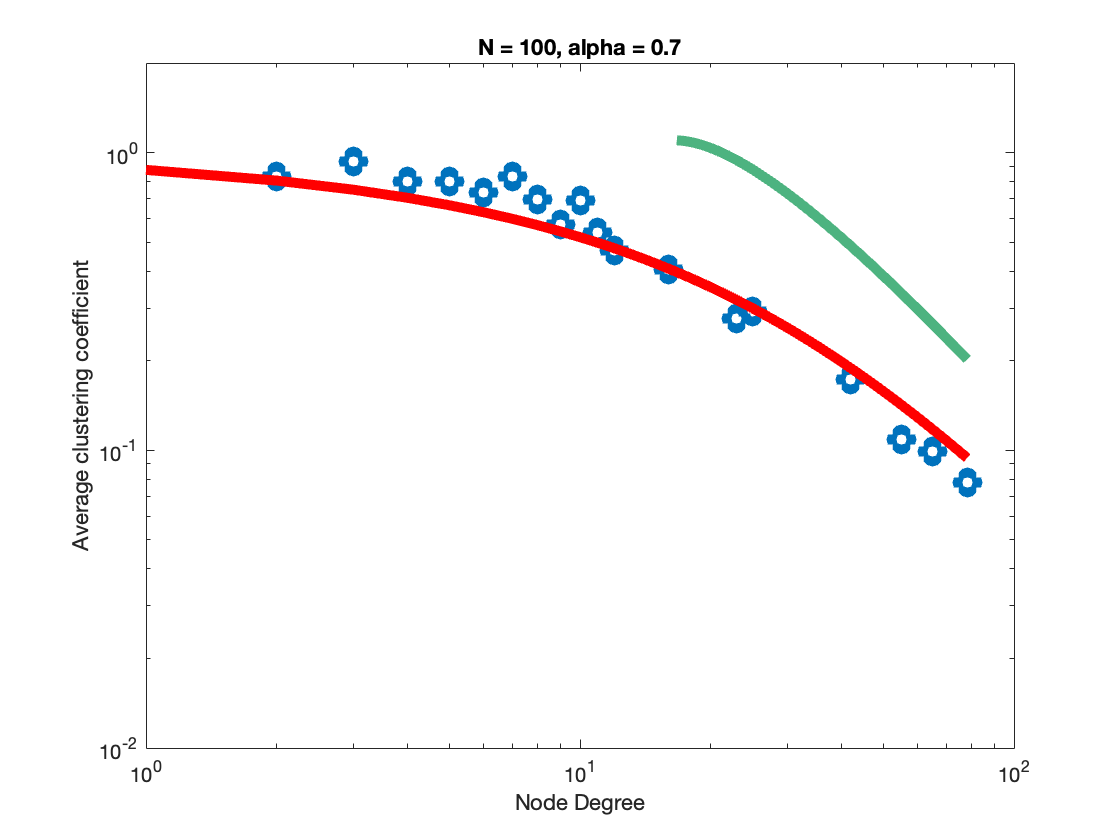}\\
    \includegraphics[width=0.32\linewidth, height= 0.18\linewidth]{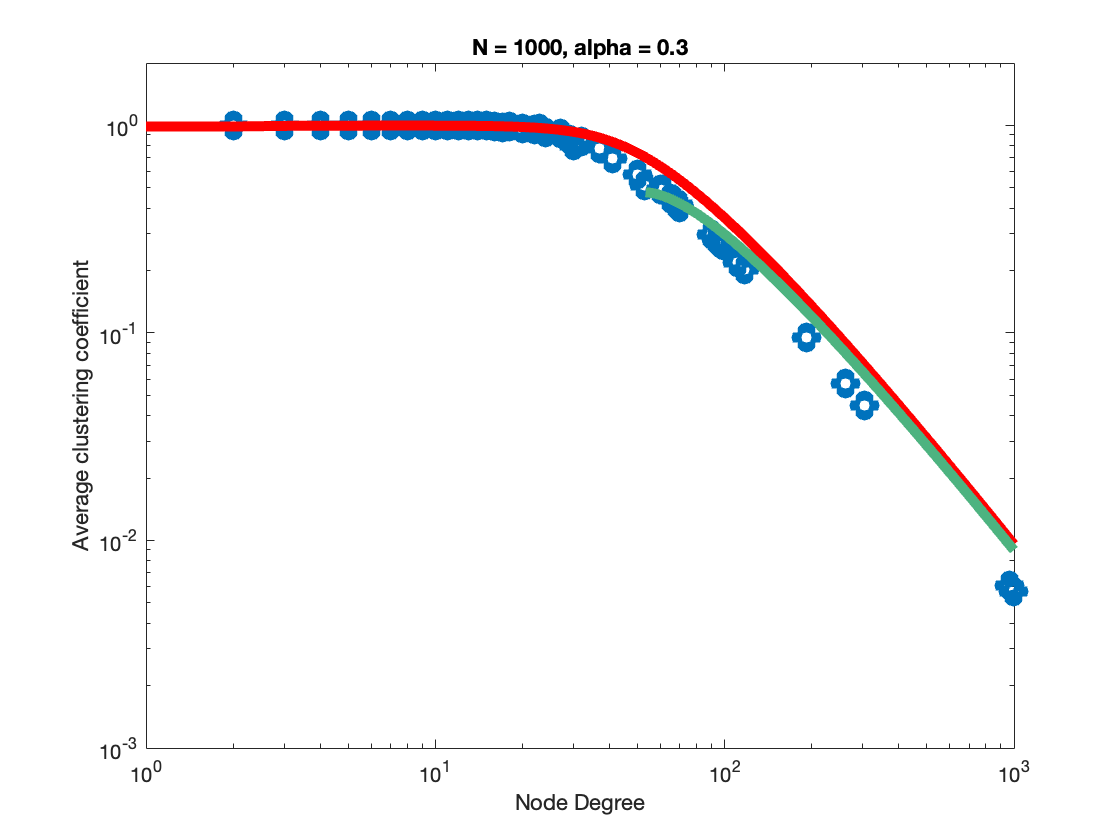} 
    \includegraphics[width=0.32\linewidth, height= 0.18\linewidth]{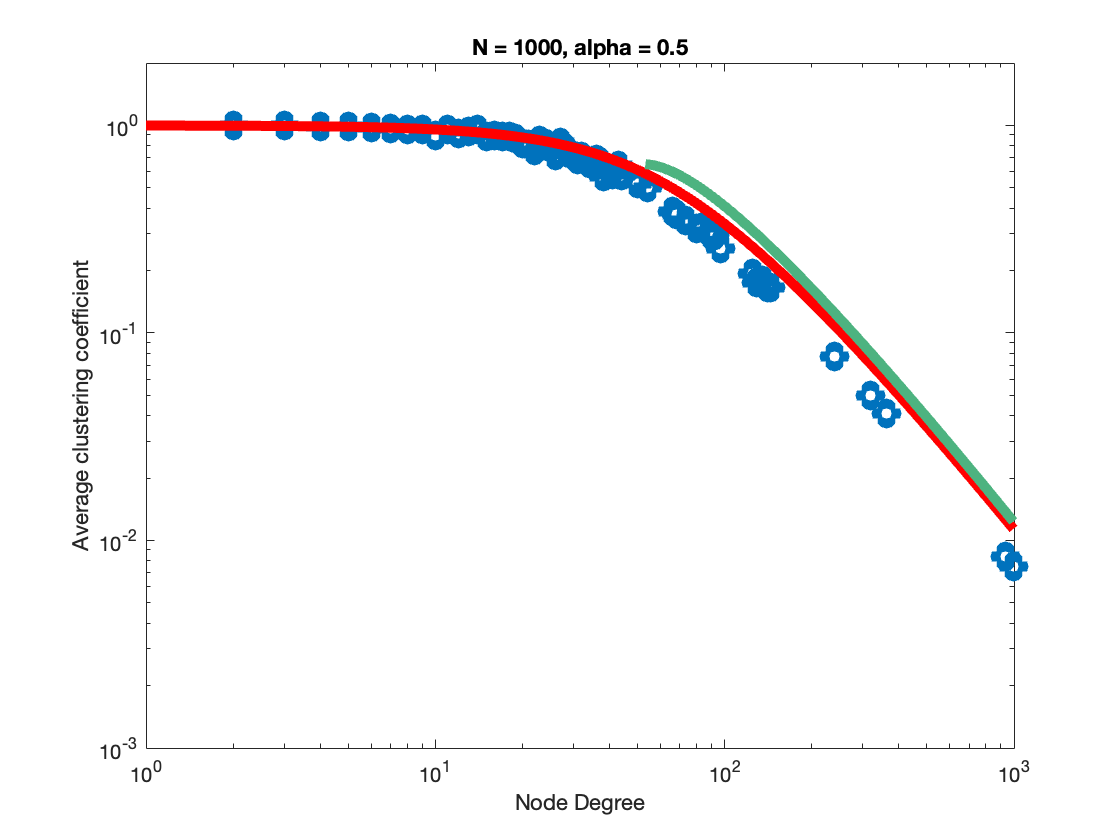}
    \includegraphics[width=0.32\linewidth, height= 0.18\linewidth]{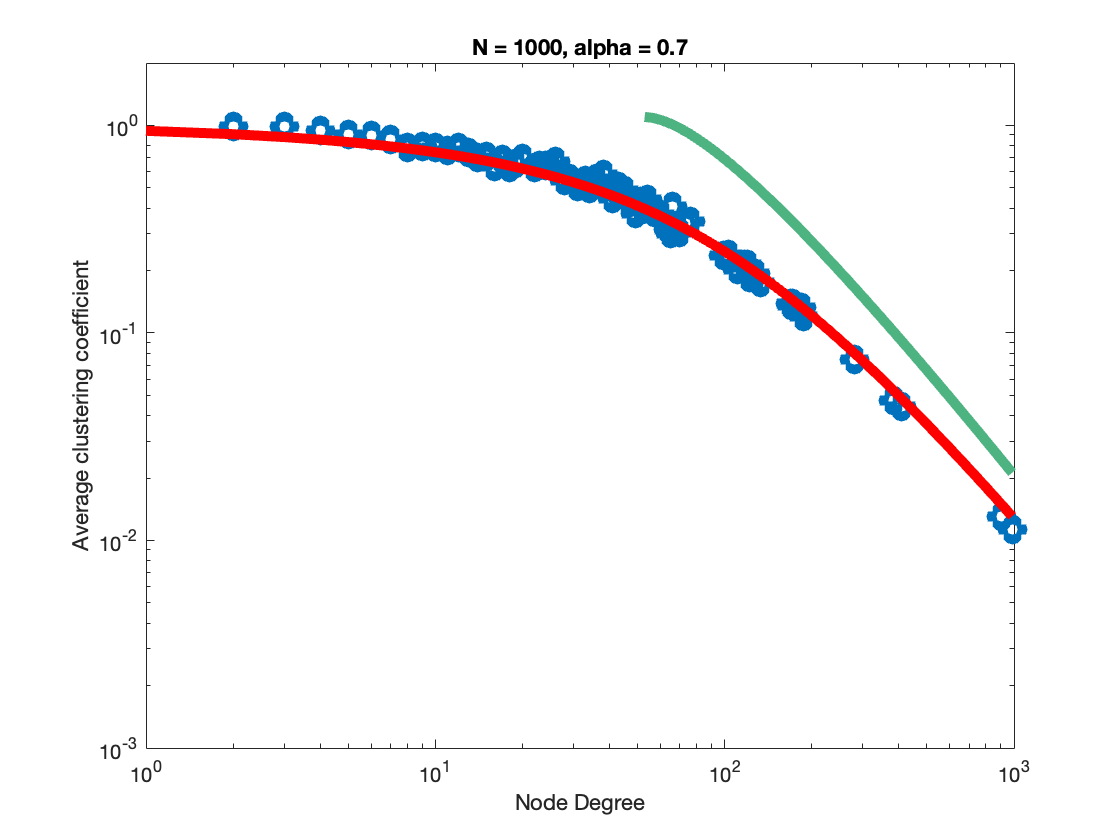}\\
    \includegraphics[width=0.32\linewidth, height= 0.18\linewidth]{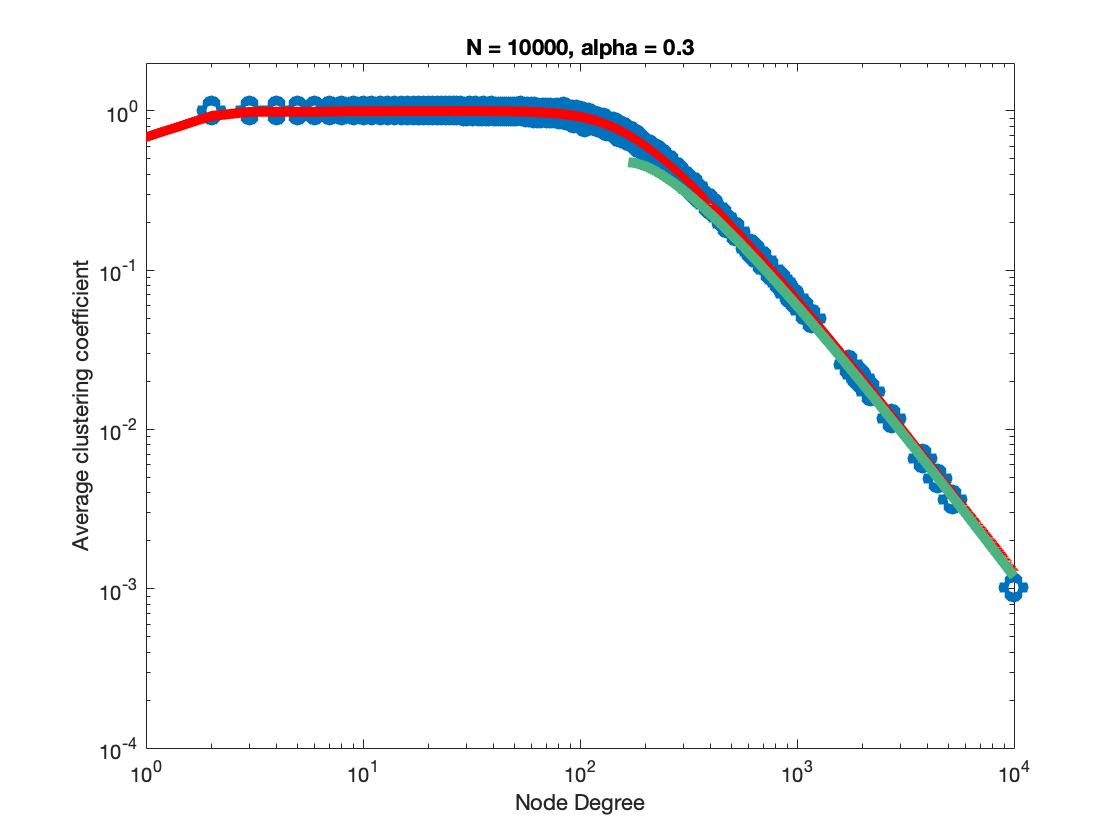} \includegraphics[width=0.32\linewidth, height= 0.18\linewidth]{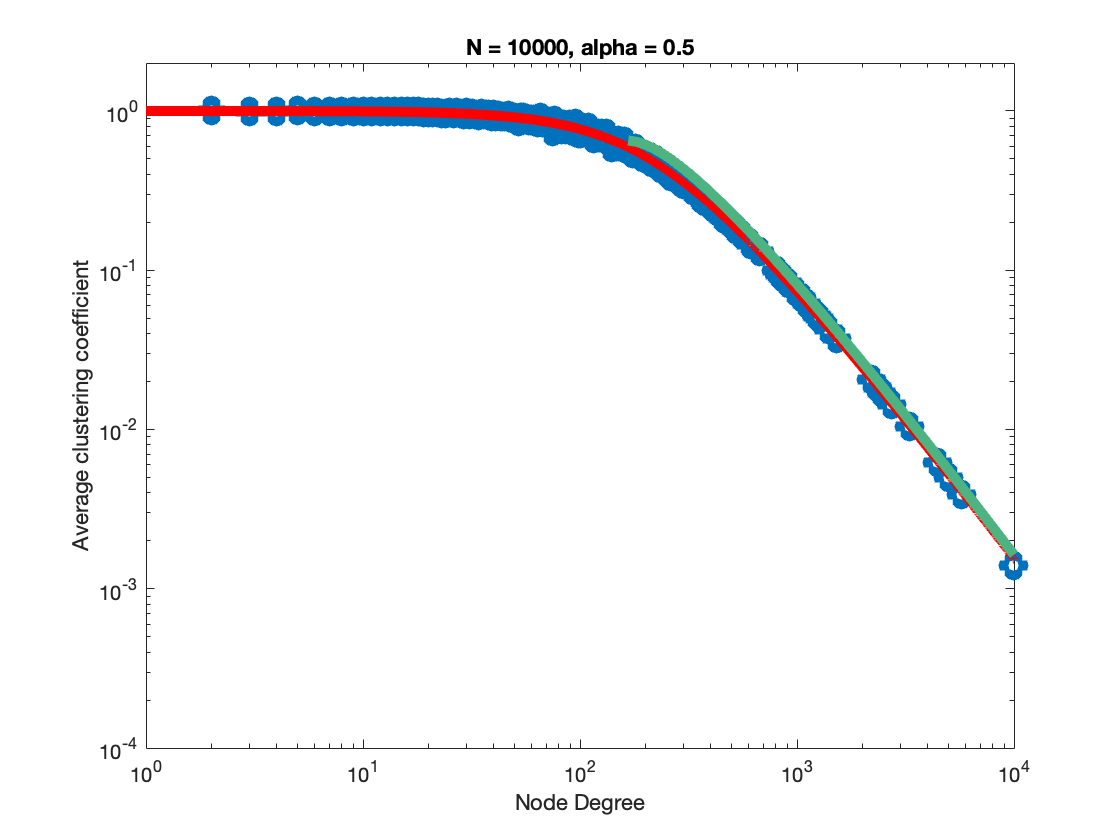}
    \includegraphics[width=0.32\linewidth, height= 0.18\linewidth]{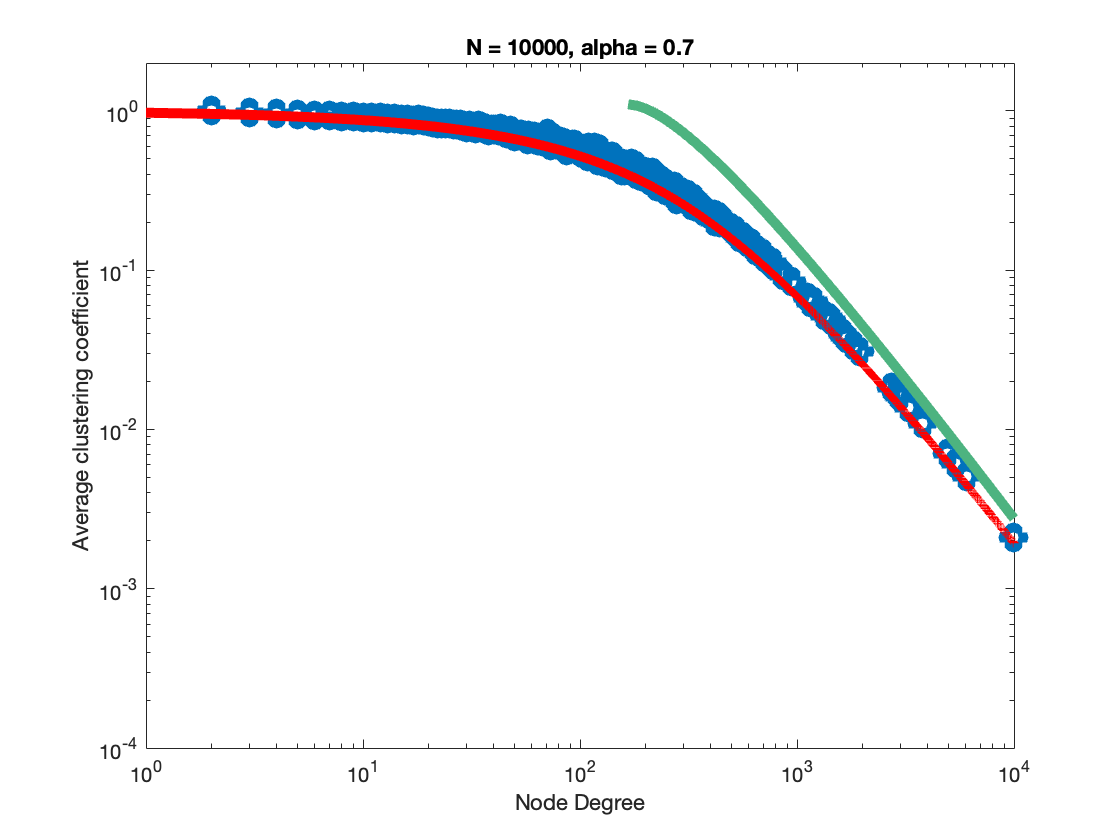}
    \caption{{\bf Clustering functions for different values of $n$ and $\alpha$ for the MSM model~\eqref{eq:pizza} with Pareto weights~\eqref{eq:pareto}.} Blue circles: empirical clustering function~\eqref{eq:cc_empirical} (versus the reduced degree $a=k/\sqrt{n}$) computed on actual realized graphs sampled from the model (obtained by sampling the weights once, and sampling the graph once conditionally on the realized weights). Red curves: our analytical expression~\eqref{eq:analytical_clustering} for the annealed clustering function. 
    Green curves: our asymptotic calculation~\eqref{eq:CC_hub} valid for diverging reduced degrees. We evaluate and plot all functions only for degrees larger than $1$, to avoid ambiguities in the definition of the clustering coefficient for $k<2$.
    From top to bottom: $n=10^2,10^3,10^4$. 
    From left to right: $\alpha=0.3,0.5,0.7$.}
    \label{fig:clusterings}
\end{figure*}

\paragraph*{The Local Clustering Coefficient.}
Abstractly, the clustering coefficient quantifies the tendency of a graph to form closed triangles. Globally, it can be defined as the ratio between the total number of closed triangles and the number of potential  (either open or closed) triangles (also called cherries, or wedges). In our regime with $0<\alpha<1$, this ratio vanishes for the MSM model~\cite{avena2022inhomogeneous}, in line with what is observed in sparse real-world networks. 
Our interest here is the (more distinctive) {\em local} clustering coefficient $C_v$, which quantifies the tendency of the neighbours of the specific node $v$ to form closed triangles around $v$.
For a node $v$ with degree $D_v$, $C_v$ is defined as the ratio between the number $\Delta_v$ of triangles containing $v$ and the number of wedges centered at $v$. In terms of the adjacency matrix $A=(A_{ij})_{i,j=1}^n$, this reads 
\begin{equation}
    C_v = \frac{\Delta_v}{\binom{D_v}{2}} = \frac{\sum_{i\ne v}\sum_{j\ne v,i} A_{vi}A_{ij}A_{jv}}{D_v (D_v - 1)},\quad D_v > 1.\label{eq:CC_exact}
\end{equation}
The behaviour of $C_v$ versus $D_v$, as well as the average value $C$ of $C_v$ over nodes, can effectively characterize real-world networks and discriminate among different models -- especially in determining whether a finite (expected) value of $C$ can emerge in large sparse graphs, as observed in real-world networks. 
Note that $C_v$ in~\eqref{eq:CC_exact} is defined only for nodes with $D_v > 1$, as only nodes with at least two neighbours can host a triangle. 
For nodes with $D_v = 0, 1$, one can either leave $C_v$ undefined (and exclude it from the calculation of $C$) or conventionally set $C_v\equiv 0$ (and include it in the calculation of $C$). We will consider both cases and discuss their implications.

\paragraph*{Clustering Function.}
For a given graph, we consider the average clustering coefficient as a function of the degree $k$ of each node, i.e., the \emph{empirical clustering function}
\begin{equation} 
C(k) = \frac{1}{N_{k}} \sum_{v\colon D_v = k} \frac{\Delta_v}{\binom{k}{2}},
\label{eq:cc_empirical}
\end{equation}
where $N_k = \sum_{v\colon D_v = k} \delta_{D_v,k}$ is the number of nodes with degree $k$. In terms of expectations ($\EX$) over the model, we introduce the {\em annealed clustering function} as
\begin{equation}
    \bar{C}(k) = \frac{1}{\EX[N_k]} \EX \left[ \sum_{v\colon D_v = k} \frac{\Delta_v}{\binom{k}{2}} \right]. \label{eq:cc_annealed}
\end{equation}
One of our main results is a rigorous calculation of $\bar{C}(k)$ in the asymptotic (large $n$) limit, valid for both small and large $k$, which shows excellent agreement with the empirical clustering function $C(k)$~\footnote{We also believe that, through a second-moment analysis, it could be proven that in the large $n$ limit, with high probability, the two clustering functions become the same. We will however not prove this claim in the present work.}. 
In this setting, we redefine $\bar{C}(\cdot)$ as a function of the \textit{reduced degree} $a = {k}/{\sqrt{n}}$, as this rescaling allows us to prove that, defining $g(x)\equiv 1-\e^{-x}$, the limiting form of $\bar{C}(a)$ as $n \to \infty$ is 
\begin{equation}
    \bar{C}(a) = \frac{\alpha^2}{a^2}\! \int_{0}^\infty \!\!\!\!\!{\mathrm d}x
    \int_{0}^\infty  \!\!\!\!\!{\mathrm d}y\,\frac{g(a^{1/\alpha} {\tau_\alpha}x)}{x^{1+\alpha}}
    \frac{g(a^{1/\alpha} {\tau_\alpha}y)}{y^{1+\alpha}} g(x y), 
    \label{eq:analytical_clustering}
\end{equation}
where ${\tau_\alpha} \equiv \Gamma(1-\alpha)^{-1/\alpha}$, and $\Gamma(\cdot)$ is the Euler Gamma function. The proof is in Supplementary Information (SI). In Figure~\ref{fig:clusterings}, we compare the numerical evaluation of the integral~\eqref{eq:analytical_clustering} with the direct computation of the empirical clustering function~\eqref{eq:cc_empirical} on the actual realizations of the random graph. The agreement is remarkably good, already for moderate values of $n$. 

\paragraph*{Hub and Leaf Behaviour.}
To refine the characterization of clustering in the model, we investigate the limiting behavior of the annealed clustering function for nodes in two distinct regimes of the degree spectrum: \emph{leaf} and \emph{hub} nodes, corresponding to poorly and highly connected ones, respectively. 
For leaf ($L$) nodes, defined as nodes with vanishing reduced degree $a \to 0$, we find (see SI) that the function in~\eqref{eq:analytical_clustering} has the asymptotic plateau 
\begin{equation}
    \bar{C}_L \equiv \lim_{a \to 0} \bar{C}(a) = 1,
    \label{eq:CC_leaf}
\end{equation}
which implies that in the low-degree limit the local neighborhood of a node is almost fully clustered (excluding of course nodes with $k=0,1$).
For hub ($H$) nodes, defined as nodes with diverging reduced degree $a \to \infty$, we find (see SI) that the function~\eqref{eq:analytical_clustering} decays as
\begin{equation}
    \bar{C}(a) \sim \bar{C}_{H}(a) \equiv 2 \Gamma(1-\alpha)\frac{\log a}{a^2},\quad a\to\infty.
    \label{eq:CC_hub}
\end{equation}
Figure~\ref{fig:clusterings} shows that the asymptotics~\eqref{eq:CC_leaf} and~\eqref{eq:CC_hub} are in excellent agreement both with  the empirical clustering function~\eqref{eq:cc_empirical} computed on actual realizations of the random graph and with the full analytical expression~\eqref{eq:analytical_clustering} for the annealed clustering function~\eqref{eq:cc_annealed}.

\paragraph*{Average Clustering Coefficient.}
The previous results finally allow us to estimate the overall expected value $\bar{C}$ of the average local clustering coefficient $C$ by averaging the annealed clustering function $\bar{C}(a)$ over the distribution $P(a)$ of the reduced degree $a$ as
\begin{eqnarray}
    \bar{C} &\equiv& \int da~ \bar{C}(a) P(a) \nonumber\\
    &\simeq& \int_{a\in L} {\mathrm d}a~ \bar{C}_L(a) P(a) + \int_{a\in H} {\mathrm d}a~ \bar{C}_H(a) P(a),
    \label{eq:cc_avg} 
\end{eqnarray}
where we have formally split the integral into two contributions, produced by leaf ($a\in L$) and hub ($a\in H$) nodes respectively, consistent with the two regimes characterized asymptotically in~\eqref{eq:CC_leaf} and~\eqref{eq:CC_hub}. 
A rigorous definition of this split is provided in SI, where we also show that, in order to compute $\bar{C}$ exactly, it is not necessary to identify precisely the crossover value(s) of $a$ separating the two regions. 
Notably, it turns out that the integral over $a\in H$ in~\eqref{eq:cc_avg} vanishes, implying that hubs no not contribute to the overall clustering (see SI). 
The remaining contribution of the leaf nodes $a\in L$ is always finite, but its precise value depends on whether $C_{k}$ for nodes with degree $k<2$ is defined as $C_{k<2}\equiv0$ and hence included in the integration over $L$ (in which case $\bar{C}_L(a)\equiv 0$ for $0\le a\le 1/\sqrt{n}$) or undefined and not included in the integration. 
In SI we find that, depending on the choice,
\begin{align}
    \lim_{n\to\infty} \bar{C} = \begin{cases}
        1 & \text{ if } C_{k<2} \text{ undefined}, \\
        1 - r_{0/1} & \text{ if } C_{k<2} \equiv 0,
    \end{cases} \label{eq:cc_final} 
\end{align}
where $r_{0/1}$ is the expected fraction of nodes with degree $0$ or $1$. We plot the two versions of the average local clustering coefficient in Figure~\ref{fig:averagec}, confirming the agreement with the theoretical calculations. This result means that the network is strongly clustered: for almost all nodes that can form triangles, almost all triangles are closed. 

\begin{figure}[t]
    \centering \includegraphics[width=\linewidth, height= 0.5\linewidth]{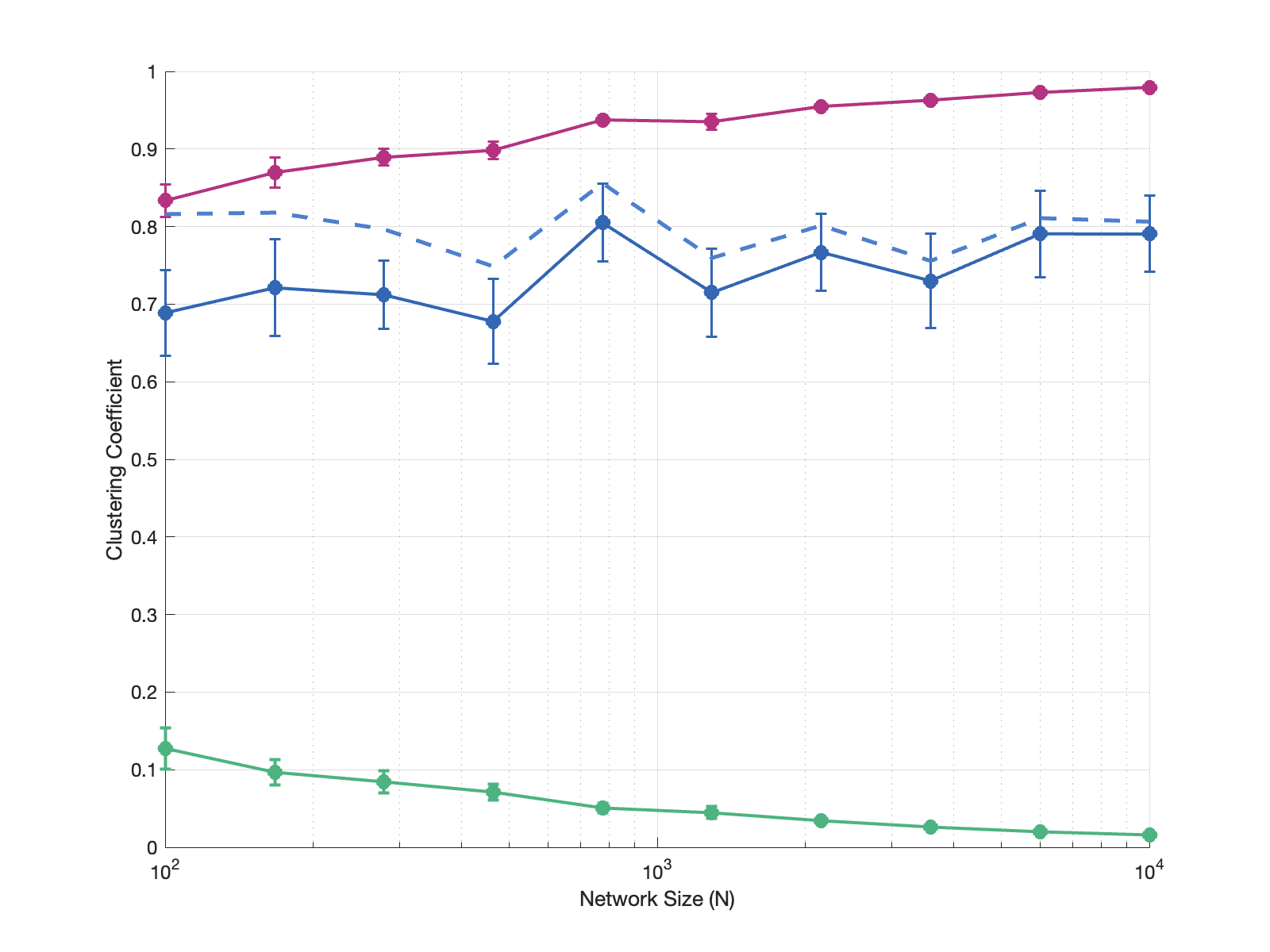}
    \caption{\textbf{Average local clustering coefficient $C$ and distance to $r_{0/1}$ versus network size $n$ (for $\alpha=0.5$).} 
    All results are obtained by sampling the weights 10 times independently for each value of $n$, sampling a single graph conditionally on each realization of the weights, computing the relevant quantities on that single graph, and finally calculating averages and error bars over the 10 realizations.
    The trends show the node-averaged local clustering coefficient ${C}$, both including (blue symbols) and excluding (purple symbols) nodes with degree $k<2$: note that the latter evolves smoothly towards 1 with shrinking error bars as $n$ increases, while the former fluctuates with non-vanishing error bars (as a result of non-self-averaging) and becomes progressively closer to     
    $1-r_{0/1}$ over realizations (dashed blue line), which is also fluctuating. The difference between $1-r_{0/1}$ and ${C}$ computed including nodes with $k<2$ (green symbols) converges to zero with shrinking error bars.}
    \label{fig:averagec}
\end{figure}

\paragraph*{Lack of Self-Averaging.}
The result~\eqref{eq:cc_final} is calculated conditionally on the realized weights $\{w_i\}_{i=1}^n$.
It shows that, if nodes with $k<2$ are excluded, $\bar{C}$ converges to the (deterministic) maximum value 1, with vanishing fluctuations. 
By contrast, as we show in SI, for finite but large $n$, the fractions of nodes with degree zero and one concentrate around functions of the rescaled total weight $S_n=\delta_n\sum_{j=1}^nw_j$, which in turn converges in distribution to an $\alpha$-stable random variable, and thus remains a fluctuating quantity. 
This means that, for given $n$ and $\alpha$, different realizations of the weights produce different values of $r_{0/1}$ and of $\bar{C}$, if nodes with $k<2$ are included. 
Therefore, under this second definition, $\bar{C}$ converges in distribution to the random variable $1-r_{0/1}$, remaining macroscopically fluctuating even in the large graph limit. 
This behaviour follows from the infinite-mean nature of the weights and is a notable manifestation of the (rather unique) lack of self-averaging in macroscopic network properties (the fractions of nodes with $k<2$, and $\bar{C}$ itself), which is not typically observed in models with independent edges and finite-mean weights.
All these results are proven in SI and confirmed numerically in Figure~\ref{fig:averagec}.

\paragraph*{Concluding Remarks.}
This Letter contributes to the open debate about whether, in models with conditionally independent edges, the coexistence of sparsity and local clustering (along with other desirable features such as a broad degree distribution and the small-world property) can be attained \emph{without geometry}, i.e., without assuming that the connection probability $p_{ij}$ depends on the node attributes $w_i$ and $w_j$ necessarily through a function of some metric distance $d(w_i,w_j)$~\cite{krioukov2016clustering, aliakbarisani2025clustering, serrano2006clusteringI, serrano2006clusteringII, boguna2021network, newman2009random, krioukov2010hyperbolic, michielan2022detecting,allard2024geometric}.
We considered a geometry-free model with independent edges and infinite-mean fitness~\cite{garuccio2023multiscale, lalli2024geometry}, recently introduced in the context of network renormalization~\cite{gabrielli2025network}, and calculated its full clustering function -- rigorously proving the resulting \emph{finite} local clustering in the large $n$ limit and highlighting that clustering does \emph{not} imply geometry.
The infinite-mean nature of the fitness arises from a requirement of model invariance under node aggregation~\cite{garuccio2023multiscale} and
generates other realistic properties including sparsity, a power-law degree distribution~\cite{lalli2024geometry}, and the breakdown of self-averaging of various network properties (including the fraction of isolated nodes and its contribution to the average clustering itself), which is a novel result characterized here.
The model then exposes, as an alternative to geometry, the hypothesis of node aggregation invariance as a basic candidate mechanism producing realistic network properties.

\paragraph*{Acknowledgments}
This publication is part of the projects ``Network renormalization: from theoretical physics to the resilience of societies’’ with file number NWA.1418.24.029 of the research programme NWA L3 - Innovative projects within routes 2024, which is (partly) financed by the Dutch Research Council (NWO) under the grant \url{https://doi.org/10.61686/AOIJP05368}, and ``Redefining renormalization for complex networks’’ with file number OCENW.M.24.039 of the research programme Open Competition Domain Science Package 24-1, which is (partly) financed by the Dutch Research Council (NWO) under the grant \url{https://doi.org/10.61686/PBSEC42210}.
The work of van der Hofstad is supported in part by the Netherlands Organisation for Scientific Research (NWO) through the Gravitation {\sc Networks} grant 024.002.003.
Early interactions between RvdH and DG took place between 29 January and 02 February 2024 during the ICTS-NETWORKS workshop ``Challenges in Networks'' (ICTS/NETWORKS2024/01) at the International Centre for Theoretical Sciences (ICTS-TIFR) in Bangalore, India.

\nocite{*}

\bibliography{Biblio_clustering}

%
%
\clearpage
\newpage
\setcounter{equation}{0}

\setcounter{figure}{0}
\setcounter{table}{0}
\setcounter{page}{1}
\setcounter{section}{0}

\renewcommand{\theequation}{S\arabic{equation}}
\renewcommand{\thetable}{S\arabic{table}}
\renewcommand{\thefigure}{S\arabic{figure}}
\renewcommand{\thesection}{S.\Roman{section}} 
\renewcommand{\thesubsection}{\thesection.\roman{subsection}}

\onecolumngrid
{\center
\textbf{SUPPLEMENTARY INFORMATION}\\
$\quad$\\
accompanying the paper\\
\emph{``Clustering without geometry in sparse networks with independent edges''}\\
by A. Catanzaro, R. van der Hofstad and D. Garlaschelli\\
$\quad$\\
$\quad$\\
}

\section{Annealed Clustering Function}
\label{app:firstMoments}

\begin{theorem}
Let the vertex space of the network be $[n]=\{1, \ldots, n\}$. 
Let $(W_v)_{v\in[n]}$ be i.i.d.\ Pareto random variables, so that their density equals

\begin{equation}
\label{density-Pareto}
    \rho_\alpha(w)=\frac{\alpha}{w^{\alpha+1}}, \qquad w\geq 1,
\end{equation}
with $\alpha\in(0,1)$, so that $\EX[W]=\infty.$ Conditionally on $(W_v)_{v\in[n]}$, edges are present independently, with the edge between $u,v\in [n]$ being present with probability
\begin{equation}
    p_{uv}=1-\e^{-W_u W_v/n^{1/\alpha}}.
\end{equation}
Throughout, we let $(I_{u,v})_{1\leq u<v\leq n}$ denote the edge indicator variables (or the elements of the adjacency matrix), so that
\begin{equation}
    \prob\big(I_{u,v}=1\mid (W_v)_{v\in[n]}\big)=1-\e^{-W_uW_v/n^{1/\alpha}},
\end{equation}
and $(I_{u,v})_{1\leq u<v\leq n}$ are conditionally independent given $(W_v)_{v\in[n]}$. Let the Annealed Clustering Function be defined as
\begin{equation}
    \bar{C}(k) = \frac{1}{\EX\left[N_k\right]} \EX \left[ \sum_{v/D_v = k} \frac{\Delta_v}{\binom{k}{2}} \right] \label{eq:CC_annealed_app}
\end{equation}
Then, as $n\to\infty$, for $k = a \sqrt{n}$,
\begin{equation}
    \bar{C}_n(a\sqrt{n})\to \bar{C}(a)=\frac{\alpha^2}{a^2} \int_{0}^{\infty} \int_{0}^{\infty}[1-\e^{-xa^{1/\alpha} {\tau_\alpha}}][1-\e^{-ya^{1/\alpha} {\tau_\alpha}}][1-\e^{-xy}](xy)^{-(\alpha+1)}
	{\mathrm d}x {\mathrm d}y, \label{eq:analytical_clustering_app}
\end{equation}
where ${\tau_\alpha} = \Gamma(1-\alpha)^{-1/\alpha}$.
\end{theorem}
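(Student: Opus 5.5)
By exchangeability, $\EX[N_k]=n\,\prob(D_1=k)$ and $\EX\big[\sum_{v:D_v=k}\Delta_v\big]=n\,\EX[\Delta_1\mathbbm{1}\{D_1=k\}]$. The theorem therefore reduces to showing
\[
\EX[\Delta_1\mid D_1=k]\approx \binom{k}{2}\bar C(a), \qquad k=a\sqrt n .
\]
I would condition on $W_1=w$ throughout. Given $W_1=w$, the edges from vertex $1$ are independent with probabilities $p_{1j}=1-\e^{-wW_j/n^{1/\alpha}}$. Since the $W_j$ are i.i.d., $D_1$ given $W_1=w$ is exactly Binomial$(n-1,\phi_n(w))$, where
\[
\phi_n(w)=\EX[1-\e^{-wW/n^{1/\alpha}}].
\]
For $wn^{-1/\alpha}\to0$ a Tauberian computation gives
\[
\phi_n(w)\approx \Gamma(1-\alpha)\,w^\alpha/n .
\]
Hence $D_1\approx \Gamma(1-\alpha)W_1^\alpha$ up to Poisson fluctuations of order $\sqrt{D_1}$. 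The condition $D_1=a\sqrt n$ then forces
\[
W_1\approx (a\sqrt n/\Gamma(1-\alpha))^{1/\alpha}=a^{1/\alpha}\tau_\alpha\, n^{1/(2\alpha)} .
\]
The scale $n^{1/(2\alpha)}$ is exactly the one at which $W_1W_j/n^{1/\alpha}$ is of order one for $W_j$ of the same size, which explains the rescaling $k=a\sqrt n$. I would show that the conditional law of $W_1/n^{1/(2\alpha)}$ given $D_1=a\sqrt n$ concentrates at $w_a\equiv a^{1/\alpha}\tau_\alpha$. This uses a local limit theorem for the binomial, and the fact that the Pareto density of $W_1$ varies smoothly on the relevant window of relative width $O(n^{-1/4})$.

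Next I would compute the conditional triangle count. Given $W_1=w$ and $D_1=k$, the neighbours of $1$ are, by exchangeability and independence, $k$ vertices whose weights are i.i.d. with the tilted law
\[
\tilde\rho(x)\propto\rho_\alpha(x)\,\big(1-\e^{-wx/n^{1/\alpha}}\big),
\]
and they remain conditionally independent given $D_1=k$. Hence
\[
\EX[\Delta_1\mid W_1=w,D_1=k]=\binom{k}{2}\,\EX\big[1-\e^{-X X'/n^{1/\alpha}}\big],
\]
with $X,X'$ i.i.d. $\sim\tilde\rho$; the edge among two neighbours is independent of their edges to $1$ given the weights. I would substitute $x\mapsto x\, n^{1/(2\alpha)}$ and $w=w_a n^{1/(2\alpha)}$. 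The normalizing constant becomes
\[
\int \alpha x^{-1-\alpha}(1-\e^{-w_a x})\,\mathrm dx=\alpha\Gamma(1-\alpha)\,w_a^\alpha/\alpha = a/\alpha ,
\]
with the factors of $n$ cancelling. Each tilted density is therefore $\alpha^2 x^{-1-\alpha}g(w_ax)/a$ after scaling, which gives exactly the expression~\eqref{eq:analytical_clustering_app} with prefactor $\alpha^2/a^2$. The lower cutoff $x\ge n^{-1/(2\alpha)}$ disappears in the limit because $g(w_ax)x^{-1-\alpha}\sim w_ax^{-\alpha}$ is integrable at $0$.

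The main obstacle is the concentration step, together with justifying the exchange of limit and integrals. Three points need care.
\begin{enumerate}[label=(\roman*)]
\item Uniform convergence of $n\,\phi_n(w)$ to $\Gamma(1-\alpha)w^\alpha$ is needed for $w$ in a window around $w_an^{1/(2\alpha)}$. This is fine since $wn^{-1/\alpha}\to0$ there; the error term is of order $(w n^{-1/\alpha})^{\min(1,\dots)}$ relative, which is small.
\item The contribution to $\EX[\Delta_1\mathbbm 1\{D_1=k\}]$ from $W_1$ outside this window must be negligible relative to $\binom{k}{2}\prob(D_1=k)$. I would handle this with Chernoff bounds on the Binomial, using that $\Delta_1\le\binom{k}{2}$ deterministically on $\{D_1=k\}$. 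This makes the ratio a bounded conditional expectation, so only weak concentration of $W_1$ is needed.
\item The scaled integrand must be continuous in $w$ near $w_a$ and dominated, which holds by dominated convergence since $g\le1$ and $g(u)\le u$.
\end{enumerate}
Combining these gives $\bar C_n(a\sqrt n)\to\bar C(a)$ for each fixed $a>0$.
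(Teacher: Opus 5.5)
Your argument is essentially the paper's: you condition on $W_1$, use that $D_1$ is binomial with mean $\approx\Gamma(1-\alpha)W_1^\alpha$, concentrate $W_1$ at $a^{1/\alpha}\tau_\alpha n^{1/(2\alpha)}$, and rescale by $n^{1/(2\alpha)}$. Your tilted-neighbour representation is an exact rewriting of the paper's factorisation $\binom{n-1}{2}\prob(D_1'=k-2\mid W_1)\,\prob(I_{1,2}I_{1,3}I_{2,3}=1\mid W_1)$ via $\binom{n-1}{2}\binom{n-3}{k-2}=\binom{n-1}{k}\binom{k}{2}$; it makes the paper's heuristic cancellation of binomial factors exact, so you need only weak posterior concentration of $W_1$ rather than the precise asymptotics of $\EX[N_k]$.

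There is one arithmetic slip. In fact $\int_0^\infty\alpha x^{-1-\alpha}(1-\e^{-w_ax})\,\mathrm{d}x=\Gamma(1-\alpha)w_a^\alpha=a$, not $a/\alpha$, so each rescaled tilted density is $\alpha x^{-1-\alpha}g(w_ax)/a$. As you wrote it, the density would integrate to $\alpha$ and the product would give prefactor $\alpha^4/a^2$; with the correction you do get the claimed prefactor $\alpha^2/a^2$.
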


\begin{proof}
    We compute 
\begin{equation}
\EX[N_k]=n\EX\big[\prob(D_1=k\mid W_1)\big],
\end{equation}
where we note that, conditionally on $W_1=w$, $D_1 \sim \operatorname{Bin}\left(n-1, q_w\right),$ where

\begin{equation}
    q_w=\EX\big[1-\e^{-w W/n^{1/\alpha}}\big].
\end{equation}

We also compute the numerator 
\begin{equation}
    \EX\Big[\sum_{v\in [n]}\Delta_v\mathbbm{1}_{D_v=k}\Big] =n\EX\Big[\EX\Big[\Delta_1\mathbbm{1}_{D_1=k}\mid W_1\Big]\Big] =n{{n-1}\choose{2}}\EX\Big[\prob(D_1'=k-2\mid W_1)\prob(I_{1,2}I_{1,3}I_{2,3}=1\mid W_1)\Big],
\end{equation}
where $D_1'\sim \operatorname{Bin}(n-3, q_w)$, and we have used conditional independence. Here, we can identify
\begin{equation}
D_1'=\sum_{u\in[n]\setminus [3]} I_{1,u}.
\end{equation}

We see that all these expectations involve binomial random variables, which we continue to investigate, starting with the conditional success probability $q_w$.

\subsection{Analysis of success probabilities}
We compute
\begin{align}
    q_w &=\EX\big[1-\e^{-w W/n^{1/\alpha}}\big]
	\\
    & =\int_1^{\infty} [1-\e^{-w s/n^{1/\alpha}}]\frac{\alpha}{s^{\alpha+1}}{\mathrm d}s =\big(\frac{w}{n^{1/\alpha}}\big)^\alpha \int_{w n^{-1/\alpha}}^{\infty} [1-\e^{-s}]\frac{\alpha}{s^{\alpha+1}}{\mathrm d}s\nn\\
    &=\frac{w^{\alpha}}{n} \int_{0}^{\infty} [1-\e^{-s}]\frac{\alpha}{s^{\alpha+1}}{\mathrm d}s
	-\frac{w^{\alpha}}{n}\int_0^{w n^{-1/\alpha}}[1-\e^{-s}]\frac{\alpha}{s^{\alpha+1}}{\mathrm d}s.\nn
\end{align}
We next use that
\begin{equation}
    \int_{0}^{\infty} [1-\e^{-s}]\frac{\alpha}{s^{\alpha+1}}{\mathrm d}s
	=\int_{0}^{\infty} \int_0^s \e^{-y}{\mathrm d}y\frac{\alpha}{s^{\alpha+1}}{\mathrm d}s  
	=\int_{0}^{\infty} \e^{-y} \int_y^\infty  \frac{\alpha}{s^{\alpha+1}}{\mathrm d}s{\mathrm d}y 
	=\int_{0}^{\infty} \e^{-y} y^{-\alpha}{\mathrm d}y=\Gamma(1-\alpha),
\end{equation}
and
\begin{equation}
    \int_0^{w n^{-1/\alpha}}[1-\e^{-s}]\frac{\alpha}{s^{\alpha+1}}{\mathrm d}s
	\leq \int_0^{w n^{-1/\alpha}}\frac{\alpha}{s^{\alpha}}{\mathrm d}s
	=\frac{\alpha}{\alpha-1} \big(w n^{-1/\alpha}\big)^{1-\alpha}
	=\frac{\alpha}{\alpha-1} w^{1-\alpha}  n^{1-1/\alpha}.
\end{equation}
This gives excellent control over $q_w$ for all $w=o(n^{1/\alpha})$.

\subsection{Binomial local limit approximations}
Let $S_m\sim \operatorname{Bin}(m,p)$. Then, a local central limit approximation shows that
\begin{equation}
    \label{Bin-LCLT-pre}
    \prob(S_m=k)=\frac{1}{\sqrt{2 \pi mp(1-p)}}{\mathrm{e}}^{-(k-mp)^2/[2mp(1-p)]}(1+o(1)),
\end{equation}
when $|k-mp|=o((mp)^{2/3})$, and uniformly in $k$ and $p$. Such approximations have a long history, we refer to the books by Bollob\'as~\cite{bollobas2001cambridge} and Janson, {\L}uczak and Ruci\'nsky~\cite{janson2000wiley}. In particular, the upper bound in~\eqref{Bin-LCLT-pre} follows from~\cite[Theorem 1.2]{bollobas2001cambridge}, the lower bound from~\cite[Theorem 1.5]{bollobas2001cambridge}, and an analysis of the error terms present there, showing that they are $o(1)$ when $|n-mp|=o((mp)^{2/3})$.

In our case, we have that $m=n-1$ or $m=n-2$, $p=q_w$ with $w=W_1$, so that
\begin{equation}
    \prob(D_1=k\mid W_1=w)
	=\frac{1}{\sqrt{2 \pi mq_w(1-q_w)}}{\mathrm{e}}^{-(k-mq_w)^2/[2mq_w(1-q_w)]}(1+o(1)),
\end{equation}
when $|k-mq_w|=o((mq_w)^{2/3})$, and uniformly in $k$ and $w$. Since, for us, $k$ is fixed, while $w$ is being integrated out over, the above means that the main contribution for $W_1=w$ comes from $w$ for which $mq_w\approx k,$ with values of $w$ for which
\begin{equation}
    |k-mq_w|\geq C\sqrt{k}\log{k}
\end{equation}
receiving probability at most
\begin{equation}
    {\mathrm{e}}^{-C\log{k}}=k^{-C},
\end{equation}
which is negligible. Thus, we can safely think of $mq_w=k$, which means
\begin{equation}
    mq_w=w^{\alpha} \Gamma(1-\alpha)(1+o(1))=k,
\end{equation}
so that
\begin{equation}
    w=k^{1/\alpha} \Gamma(1-\alpha)^{-1/\alpha} (1+o(1)).
\end{equation}
This implies that, with $m=n-1$,
\begin{align}
    \EX[N_k] &=n\EX\big[\prob(D_1=k\mid W_1)\big]\\
	&=(1+o(1)) n\int_1^{\infty} \frac{1}{\sqrt{2 \pi mq_w(1-q_w)}}{\mathrm{e}}^{-(k-mq_w)^2/[2mq_w(1-q_w)]}\frac{\alpha}{w^{\alpha+1}}
	{\mathrm d}w\nn\\
	&= (1+o(1)) \int_1^{\infty} \frac{1}{\sqrt{2 \pi nq_w}}{\mathrm{e}}^{-(k-nq_w)^2/[2nq_w]}\frac{\alpha}{w^{\alpha+1}}
	{\mathrm d}w,\nn
\end{align}
as long as $nq_w\gg 1$, and thus, since $k\approx nq_w$, also $k\gg 1$. Since the above integral is dominated by $ nq_w$ that are close to $k$, we may replace $nq_w$ by $k$ everywhere, except in the $(k-nq_w)^2$ term. This gives
\begin{equation}
    \EX[N_k] = (1+o(1)) \frac{n}{\sqrt{2 \pi k}}\int_1^{\infty} {\mathrm{e}}^{-(k-nq_w)^2/[2k]}\frac{\alpha}{w^{\alpha+1}}
	{\mathrm d}w.
\end{equation}

We next substitute $nq_w\approx w^{\alpha} \Gamma(1-\alpha)$ to arrive at
\begin{align}
    \EX[N_k] &= (1+o(1)) \frac{n}{\sqrt{2 \pi k}}\int_1^{\infty} {\mathrm{e}}^{-(k-w^{\alpha} \Gamma(1-\alpha))^2/[2k]}\frac{\alpha}{w^{\alpha+1}}
	{\mathrm d}w \\
	&=(1+o(1)) \frac{n\alpha }{w_k^{\alpha+1}\sqrt{2 \pi k}}\int_1^{\infty} {\mathrm{e}}^{-(k-w^{\alpha} \Gamma(1-\alpha))^2/[2k]}
	{\mathrm d}w,\nn
\end{align}
where $w_k$ solves $w^{\alpha} \Gamma(1-\alpha)=k$, and thus
\begin{equation}
    w_k=k^{1/\alpha}  \Gamma(1-\alpha)^{-1/\alpha}.
\end{equation}

We then let $z=w^{\alpha} \Gamma(1-\alpha)$, so that ${\mathrm d}z=\alpha w^{\alpha-1} \Gamma(1-\alpha) {\mathrm d}w$, so  that

\begin{equation}
    {\mathrm d}w=\Gamma(1-\alpha)^{-1} (z/\Gamma(1-\alpha))^{(1-\alpha)/\alpha}{\mathrm d}z=z^{(1-\alpha)/\alpha}\Gamma(1-\alpha)^{-1/\alpha}{\mathrm d}z,
\end{equation}

so that

\begin{align}
    \EX[N_k] &= (1+o(1))
	\frac{n\alpha }{w_k^{\alpha+1}\sqrt{2 \pi k}}\int_1^{\infty} 
	{\mathrm{e}}^{-(k-z)^2/[2k]} z^{(1-\alpha)/\alpha}\Gamma(1-\alpha)^{-1/\alpha}
	{\mathrm d}z\\
	&=(1+o(1))
	\frac{n\alpha }{w_k^{\alpha+1}\sqrt{2 \pi k}}\Gamma(1-\alpha)^{-1/\alpha}
	k^{(1-\alpha)/\alpha}\int_1^{\infty} {\mathrm{e}}^{-(k-z)^2/[2k]} {\mathrm d}z\nn\\
	&=(1+o(1))
	\frac{n\alpha}{w_k^{\alpha+1}\sqrt{2\pi k}} k^{(1-\alpha)/\alpha}\Gamma(1-\alpha)^{-1/\alpha} \sqrt{2\pi k} \nn\\
	&=n(1+o(1))\alpha k^{-(\alpha+1)/\alpha} \Gamma(1-\alpha)^{(\alpha+1)/\alpha} k^{(1-\alpha)/\alpha}\Gamma(1-\alpha)^{-1/\alpha} \nn\\
    & = n(1+o(1))\frac{\alpha \Gamma(1-\alpha)}{k^2}.\nn
\end{align}

This confirms that $\EX[N_k]=\Theta(1)$ when $k=\Theta(\sqrt{n})$, so that one cannot expect that $N_k/\EX[N_k]\overset{p}{\to} 1$ for such values of $k$. 

Since~\eqref{Bin-LCLT-pre} depends on $k$, for $k$ large, in a relatively weak way, this means that, for values of $w$ close to $w_k$,
\begin{equation}
    \prob(D_1'=k-2\mid W_1=w)\approx \prob(D_1=k\mid W_1=w).
\end{equation}
Thus, since such factors appear both in $\EX[N_k]$ and $\EX\Big[\sum_{v\in [n]}\Delta_v\mathbbm{1}_{D_v=k}\Big]$, they cancel out asymptotically, so that
\begin{equation}
    \frac{\EX\Big[\sum_{v\in [n]}\Delta_v\mathbbm{1}_{D_v=k}\Big]}{\EX[N_k]}
	\approx 
    n{{n-1}\choose{2}}\prob(I_{1,2}I_{1,3}I_{2,3}=1\mid W_1=w_k).
\end{equation}
Recalling~\eqref{eq:CC_annealed_app}, this then shows that
\begin{equation}
    \bar{C}_n(k)=(1+o(1)) \frac{{{n-1}\choose{2}}}{{{k}\choose{2}}} \prob(I_{1,2}I_{1,3}I_{2,3}=1\mid W_1=w_k).
\end{equation}

We now take $k=a\sqrt{n}$, and obtain 
\begin{equation}
    \bar{C}_n(a\sqrt{n})=(1+o(1))\frac{n\alpha^2}{a^2} \int_1^{\infty} \int_1^{\infty}[1-\e^{-xw_k/n^{1/\alpha}}][1-\e^{-yw_k/n^{1/\alpha}}][1-\e^{-xy/n^{1/\alpha}}](xy)^{-(\alpha+1)}
	{\mathrm d}x {\mathrm d}y.
\end{equation}

We first note that, for $k=a\sqrt{n}$,
\begin{equation}
    w_{a\sqrt{n}}=n^{1/(2\alpha)} a^{1/\alpha} \Gamma(1-\alpha)^{-1/\alpha}
	\equiv n^{1/(2\alpha)} a^{1/\alpha} {\tau_\alpha}.
\end{equation}
We rescale $x$ and $y$ by $n^{-1/2\alpha}$ to obtain
\begin{align}
    \bar{C}_n(a\sqrt{n}) & =(1+o(1))\frac{\alpha^2}{a^2} \int_{n^{-1/2\alpha}}^{\infty} \int_{n^{-1/2\alpha}}^{\infty}[1-\e^{-xa^{1/\alpha} {\tau_\alpha}}][1-\e^{-ya^{1/\alpha} {\tau_\alpha}}][1-\e^{-xy}](xy)^{-(\alpha+1)}
	{\mathrm d}x {\mathrm d}y \\
	& =(1+o(1))\frac{\alpha^2}{a^2} \int_{0}^{\infty} \int_{0}^{\infty}[1-\e^{-xa^{1/\alpha} {\tau_\alpha}}][1-\e^{-ya^{1/\alpha} {\tau_\alpha}}][1-\e^{-xy}](xy)^{-(\alpha+1)}
	{\mathrm d}x {\mathrm d}y,\nn
\end{align}
as predicted in~\eqref{eq:analytical_clustering_app}.
\end{proof}

\section{Asymptotics of the Clustering function}
\label{app:asymptotics}

\begin{lemma}[$\bar{C}(a)$ is bounded by 1]
\label{lem-annealed-bd-1}
For all $a>0$, $\bar{C}(a)\leq 1$.
\end{lemma}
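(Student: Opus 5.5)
The plan is to bound the last factor $g(xy)=1-\e^{-xy}$ in~\eqref{eq:analytical_clustering_app} by $1$. After that the double integral factorizes into the square of a one-dimensional integral, and this integral can be computed in closed form with the same identity used in the analysis of the success probabilities $q_w$.

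\textbf{Step 1 (factorization).} Set $b\equiv a^{1/\alpha}\tau_\alpha>0$. Since $0\le 1-\e^{-xy}\le 1$ for $x,y\ge 0$ and the remaining integrand is nonnegative, the formula~\eqref{eq:analytical_clustering_app} gives
\begin{equation}
\bar{C}(a)\le \frac{1}{a^2}\Big(\alpha\int_0^\infty [1-\e^{-bx}]\,x^{-(\alpha+1)}\,{\mathrm d}x\Big)^2 .
\end{equation}

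\textbf{Step 2 (the one-dimensional integral).} Substitute $s=bx$. This gives
\begin{equation}
\alpha\int_0^\infty [1-\e^{-bx}]x^{-(\alpha+1)}{\mathrm d}x=b^\alpha \int_0^\infty [1-\e^{-s}]\frac{\alpha}{s^{\alpha+1}}{\mathrm d}s=b^\alpha\,\Gamma(1-\alpha).
\end{equation}
The last equality is exactly the Fubini computation already carried out in the subsection on success probabilities. The integral is finite because $1-\e^{-s}\le s$ near $0$ (and $\alpha<1$) and $1-\e^{-s}\le 1$ at infinity (and $\alpha>0$).

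\textbf{Step 3 (conclusion).} Since $\tau_\alpha^\alpha=\Gamma(1-\alpha)^{-1}$, we get $b^\alpha\Gamma(1-\alpha)=a\,\tau_\alpha^\alpha\,\Gamma(1-\alpha)=a$. The bound from Step 1 therefore equals $a^2/a^2=1$, which proves $\bar{C}(a)\le 1$ for all $a>0$.

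There is no real obstacle here. The only points needing care are the integrability checks justifying Fubini/Tonelli, which hold because all integrands are nonnegative, and the correct bookkeeping of the constant $\tau_\alpha$. This computation also suggests the limit in~\eqref{eq:CC_leaf}: as $a\to0$ one has $b\to 0$, the mass of the integrand sits at $x,y$ of order $1/b\to\infty$, so $g(xy)\to1$ there and the bound becomes sharp.
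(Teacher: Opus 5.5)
Your proof is correct and essentially identical to the paper's: bound $1-\e^{-xy}\le 1$, factor the double integral into a square, and evaluate the one-dimensional integral as $a/\alpha$, which makes the bound equal to $1$. The only cosmetic difference is that you get this value through the substitution $s=bx$ and the identity $\int_0^\infty[1-\e^{-s}]\alpha s^{-(\alpha+1)}\,{\mathrm d}s=\Gamma(1-\alpha)$ from the $q_w$ analysis, whereas the paper redoes that same Fubini computation directly.
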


\begin{proof}
    Estimating $1-\e^{-xy}\leq 1$ bounds $\bar{C}(a)$ by
    \begin{equation}
        \bar{C}(a) \leq \frac{\alpha^2}{a^2} \Big(\int_{0}^{\infty} [1-\e^{-xa^{1/\alpha} {\tau_\alpha}}]x^{-(\alpha+1)}
	{\mathrm d}x\Big)^2,
    \end{equation}
for which we note that
\begin{align}
    \label{single-integral}
	\int_{0}^{\infty} [1-\e^{-xa^{1/\alpha} {\tau_\alpha}}]x^{-(\alpha+1)}
	{\mathrm d}x
	&=\int_{0}^{\infty} \int_0^{xa^{1/\alpha} {\tau_\alpha}} \e^{-y} {\mathrm d}y x^{-(\alpha+1)}
	{\mathrm d}x \\
	&=\int_{0}^{\infty} \e^{-y} \int_{ya^{-1/\alpha}/{\tau_\alpha}}^{\infty} x^{-(\alpha+1)}{\mathrm d}x
	{\mathrm d}y
	=\frac{1}{\alpha} \int_{0}^{\infty} \e^{-y} (ya^{-1/\alpha}/{\tau_\alpha})^{-\alpha}{\mathrm d}y\nn\\
    &=\frac{a\tau_\alpha^{\alpha}}{\alpha} \int_{0}^{\infty} \e^{-y} y^{-\alpha}{\mathrm d}y
	=\frac{a\tau_\alpha^{\alpha}}{\alpha}\Gamma(1-\alpha) \nn\\
    & =\frac{a}{\alpha},\nn
\end{align}
using that ${\tau_\alpha}=\Gamma(1-\alpha)^{-1/\alpha}$. Thus, indeed, $\bar{C}(a)\leq 1$.
\end{proof}

We next investigate $\bar{C}(a)$ for $a\ll 1$:
\begin{lemma}[$\bar{C}(a)$ is close to 1 for $a$ small]
\label{lem-annealed-small-a}
As $a\searrow 0$,
	\begin{equation}
	    \bar{C}(a)=1+o(1).
	\end{equation}
\end{lemma}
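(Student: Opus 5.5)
By Lemma~\ref{lem-annealed-bd-1} we already have $\bar{C}(a)\le 1$, so it suffices to prove the matching lower bound $\bar{C}(a)\ge 1-o(1)$ as $a\searrow 0$. The natural route is to measure the defect. The computation~\eqref{single-integral} in the proof of Lemma~\ref{lem-annealed-bd-1} shows that $\frac{\alpha^2}{a^2}\big(\int_0^\infty g(a^{1/\alpha}\tau_\alpha x)x^{-(\alpha+1)}{\mathrm d}x\big)^2=1$. Therefore
\begin{equation}
1-\bar{C}(a)=\frac{\alpha^2}{a^2}\int_0^\infty\!\!\int_0^\infty \frac{g(a^{1/\alpha}\tau_\alpha x)}{x^{1+\alpha}}\frac{g(a^{1/\alpha}\tau_\alpha y)}{y^{1+\alpha}}\,\e^{-xy}\,{\mathrm d}x\,{\mathrm d}y ,
\end{equation}
and the task reduces to showing that this nonnegative quantity vanishes as $a\to0$.

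The key step is to rescale. Set $b=a^{1/\alpha}\tau_\alpha\to0$ and substitute $x=u/b$, $y=v/b$. Then ${\mathrm d}x\,x^{-(1+\alpha)}=b^{\alpha}u^{-(1+\alpha)}{\mathrm d}u$, and since $b^{2\alpha}=a^2\tau_\alpha^{2\alpha}=a^2/\Gamma(1-\alpha)^2$, we obtain
\begin{equation}
1-\bar{C}(a)=\frac{\alpha^2}{\Gamma(1-\alpha)^2}\int_0^\infty\!\!\int_0^\infty \frac{g(u)}{u^{1+\alpha}}\frac{g(v)}{v^{1+\alpha}}\,\e^{-uv/b^2}\,{\mathrm d}u\,{\mathrm d}v .
\end{equation}
For every fixed $(u,v)$ with $u,v>0$, the factor $\e^{-uv/b^2}$ tends to $0$ as $b\to0$. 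It is also bounded by $1$, and the dominating function $g(u)g(v)(uv)^{-(1+\alpha)}$ is integrable on $(0,\infty)^2$. Integrability holds because $g(u)\le\min(u,1)$ and $0<\alpha<1$; indeed its integral equals $\Gamma(1-\alpha)^2/\alpha^2$, again by~\eqref{single-integral}. Dominated convergence then gives $1-\bar{C}(a)\to0$, which proves the lemma.

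The only delicate point is the dominated-convergence step near the axes $u\to0$ or $v\to0$, where $\e^{-uv/b^2}$ does not become small uniformly. The integrability of the dominating function near the axes handles this, and no quantitative rate is required. If one wanted a rate, one could split the domain at $uv\le b$ versus $uv>b$: on the region $uv>b$ the integrand carries the factor $\e^{-1/b}$, and on the region $uv\le b$ a direct estimate gives mass $O(b^{\alpha'}\log(1/b))$-type bounds. For the statement as posed, however, dominated convergence suffices.
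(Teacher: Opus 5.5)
Your proof is correct. It shares the paper's first step: both use the normalization behind Lemma~\ref{lem-annealed-bd-1} to write $1-\bar{C}(a)$ as the part of the integral carrying $\e^{-xy}$. From there you finish differently.

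The paper stays in the original variables and splits according to whether $x$ or $y$ lies in $[0,a^{-\varepsilon}]$. On these strips it bounds one factor $1-\e^{-ya^{1/\alpha}\tau_\alpha}$ linearly, giving $O(a^{1/\alpha-1-\varepsilon(1-\alpha)})$. On the complement it uses $\e^{-xy}\le\e^{-a^{-2\varepsilon}}$ together with Lemma~\ref{lem-annealed-bd-1}.

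You instead rescale by $b=a^{1/\alpha}\tau_\alpha$, so that $a$ survives only in $\e^{-uv/b^2}$, and conclude by dominated convergence. The paper's split is really a hand-made, quantitative version of your argument: the strips near the axes are controlled by integrability of the dominating function, and the rest by uniform smallness of the exponential. What it buys is an explicit rate.

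Your route is shorter and has no $\varepsilon$ to tune or constants to track. It also exposes useful structure. With $g(x)=1-\e^{-x}$ and the $a$-independent probability measure $\mu({\mathrm d}u\,{\mathrm d}v)=\frac{\alpha^2}{\Gamma(1-\alpha)^2}g(u)g(v)(uv)^{-(1+\alpha)}{\mathrm d}u\,{\mathrm d}v$, you have $\bar{C}(a)=\int g(uv/b^2)\,\mu({\mathrm d}u\,{\mathrm d}v)$. This gives Lemma~\ref{lem-annealed-bd-1}, the monotone decrease of $\bar{C}(a)$ in $a$, and the limit as $a\searrow0$ in one stroke. The paper performs exactly this rescaling, but only for $\bar{C}_1(a)$ in the proof of Lemma~\ref{lem-annealed-large-a}.

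If you want a rate, your split at $uv=b$ works with exponent $1-\alpha$. Since $\mu(uv\le b)=O(b^{1-\alpha}\log(1/b))$, you get $1-\bar{C}(a)=O(a^{(1-\alpha)/\alpha}\log(1/a))$, comparable to the paper's bound.
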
 

\begin{proof}
    For $a\ll 1$, we write
\begin{align}
    \bar{C}(a)&=\frac{\alpha^2}{a^2} \int_{0}^{\infty} \int_{0}^{\infty}[1-\e^{-xa^{1/\alpha} {\tau_\alpha}}][1-\e^{-ya^{1/\alpha} {\tau_\alpha}}](xy)^{-(\alpha+1)}
	{\mathrm d}x {\mathrm d}y\\
	&\qquad-\frac{\alpha^2}{a^2} \int_{0}^{\infty} \int_{0}^{\infty}[1-\e^{-xa^{1/\alpha} {\tau_\alpha}}][1-\e^{-ya^{1/\alpha} {\tau_\alpha}}]\e^{-xy}(xy)^{-(\alpha+1)}
	{\mathrm d}x {\mathrm d}y.\nn
\end{align}

The first integral equals $1$ by Lemma~\ref{lem-annealed-bd-1}, so that
\begin{equation}
    \bar{C}(a) =1-\bar{C}_1(a),
\end{equation}
where
\begin{equation}
    \bar{C}_1(a)=\frac{\alpha^2}{a^2} \int_{0}^{\infty} \int_{0}^{\infty}[1-\e^{-xa^{1/\alpha} {\tau_\alpha}}][1-\e^{-ya^{1/\alpha} {\tau_\alpha}}]\e^{-xy}(xy)^{-(\alpha+1)}
	{\mathrm d}x {\mathrm d}y.
\end{equation}

We first bound the integrals where either $x$ or $y$ is in $[0,a^{-\varepsilon}]$, for some $\varepsilon>0$ sufficiently small, as 
\begin{align}
    &\frac{2\alpha^2}{a^2}\int_{0}^{\infty} \int_{0}^{a^{-\varepsilon}}[1-e^{-xa^{1/\alpha} {\tau_\alpha}}][1-e^{-ya^{1/\alpha} {\tau_\alpha}}]\e^{-xy}(xy)^{-(\alpha+1)}
	{\mathrm d}x {\mathrm d}y\\
    &\qquad \leq \frac{2\alpha^2}{a^2}\int_{0}^{\infty} [1-\e^{-xa^{1/\alpha} {\tau_\alpha}}] x^{-(\alpha+1)}{\mathrm d}x
	a^{1/\alpha} \int_{0}^{a^{-\varepsilon}} y^{-\alpha}{\mathrm d}x\nn\\
	&\qquad\leq \frac{2\alpha}{a(\alpha-1)} a^{1/\alpha-\varepsilon(1-\alpha)}=o(1),\nn
\end{align}
using~\eqref{single-integral} and the fact that $1/\alpha-\varepsilon(1-\alpha)>1$ for $\varepsilon>0$ sufficiently small since $\alpha\in (0,1)$. Thus,
\begin{equation}
    \bar{C}_1(a)=o(1)+\frac{\alpha^2}{a^2} \int_{a^{-\varepsilon}}^{\infty} \int_{a^{-\varepsilon}}^{\infty}[1-\e^{-xa^{1/\alpha} {\tau_\alpha}}][1-\e^{-ya^{1/\alpha} {\tau_\alpha}}] \e^{-xy}(xy)^{-(\alpha+1)}
	{\mathrm d}x {\mathrm d}y,
\end{equation}
which is bounded by $\e^{-a^{-2\varepsilon}}=o(1)$. This shows that $\bar{C}_1(a)=o(1)$ for $a\searrow 0$, and thus proves that $\bar{C}(a)=1+o(1)$.
\end{proof}

For $a\rightarrow \infty$, the integral should be dominated by small $x,y$, so that one might consider to expand $1-\e^{-xy}=xy$, to obtain
\begin{align}
    \bar{C}(a)&=(1+o(1))
	\frac{\alpha^2}{a^2} \int_{0}^{\infty} \int_{0}^{\infty}[1-\e^{-xa^{1/\alpha} {\tau_\alpha}}][1-\e^{-ya^{1/\alpha} {\tau_\alpha}}](xy)^{-\alpha}
	{\mathrm d}x {\mathrm d}y\\
	&=(1+o(1))
	\frac{\alpha^2}{a^2} \big(a^{1/\alpha} {\tau_\alpha}\big)^{2\alpha-2}\Big(\int_{0}^{\infty}[1-\e^{-x}]x^{-\alpha}
	{\mathrm d}x\Big)^2.\nn
\end{align}

However, the integral that is squared is infinite, so that we need to proceed alternatively. In fact, this analysis is quite intricate, and a careful split is necessary. The main result is as follows:
\begin{lemma}[Large $a$ asymptotics of $\bar{C}(a)$]
\label{lem-annealed-large-a}
As $a\nearrow \infty$,
	\begin{equation}
	    \bar{C}(a)=2 \Gamma(1-\alpha)\frac{\log{a}}{a^2} +O(a^{-2}).
	\end{equation}
\end{lemma}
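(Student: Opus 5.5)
The plan is to rescale so that the large parameter appears only inside the factor $g(xy)$, where $g(x)=1-\e^{-x}$. I would then isolate the logarithm as coming from the hyperbolic region $\{1\le u,v,\ uv\lesssim B\}$. Set $b=a^{1/\alpha}\tau_\alpha\to\infty$ and substitute $u=bx$, $v=by$ in \eqref{eq:analytical_clustering_app}. Since $b^{2\alpha}=a^2\tau_\alpha^{2\alpha}=a^2/\Gamma(1-\alpha)^2$, this gives
\begin{equation}
\bar{C}(a)=\frac{\alpha^2}{\Gamma(1-\alpha)^2}\, I(B),\qquad I(B)=\int_0^\infty\!\!\int_0^\infty g(u)g(v)\,g\big(uv/B\big)\,(uv)^{-(\alpha+1)}\,{\mathrm d}u\,{\mathrm d}v,\qquad B=b^2 .
\end{equation}
It then suffices to prove $I(B)=\frac{\Gamma(1-\alpha)}{\alpha}B^{-\alpha}\log B+O(B^{-\alpha})$. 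Indeed, $B^{-\alpha}=\Gamma(1-\alpha)^2/a^2$ and $\log B=\frac{2}{\alpha}\log a+O(1)$, so this yields exactly $2\Gamma(1-\alpha)\log a/a^2+O(a^{-2})$.

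\textbf{Main term.} On $\{u,v\ge1\}$ I replace $g(u)g(v)$ by $1$ and consider
\begin{equation}
J(B)=\int_1^\infty\!\!\int_1^\infty g(uv/B)(uv)^{-(\alpha+1)}\,{\mathrm d}u\,{\mathrm d}v .
\end{equation}
Changing variables to $t=uv$ with $u$ free, the Jacobian is $1/u$ and $u$ ranges over $[1,t]$. This produces the weight $\log t$, so that $J(B)=\int_1^\infty g(t/B)\,t^{-(\alpha+1)}\log t\,{\mathrm d}t$. Next substitute $t=Bs$:
\begin{equation}
J(B)=B^{-\alpha}\int_{1/B}^\infty g(s)s^{-(\alpha+1)}\big(\log B+\log s\big)\,{\mathrm d}s .
\end{equation}
By the computation in \eqref{single-integral} (equivalently the identity used for $q_w$), $\int_0^\infty g(s)s^{-(\alpha+1)}{\mathrm d}s=\Gamma(1-\alpha)/\alpha$. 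The piece of this integral over $[0,1/B]$ is $O(B^{\alpha-1})$, and $\int_0^\infty g(s)s^{-(\alpha+1)}|\log s|\,{\mathrm d}s<\infty$. Hence $J(B)=\frac{\Gamma(1-\alpha)}{\alpha}B^{-\alpha}\log B+O(B^{-\alpha})$.

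\textbf{Error terms, each to be shown $O(B^{-\alpha})$.} Throughout I use $g(x)\le\min(1,x)$. The key auxiliary estimate is that, for $u>0$,
\begin{equation}
\int_0^\infty g(uv/B)\,v^{-(\alpha+1)}{\mathrm d}v=(u/B)^{\alpha}\,\Gamma(1-\alpha)/\alpha .
\end{equation}
The error terms are as follows.
\begin{enumerate}
\item The region $u,v\le1$. Bounding $g(u)g(v)g(uv/B)\le (uv)^2/B$ gives at most $B^{-1}\big(\int_0^1u^{1-\alpha}{\mathrm d}u\big)^2=O(B^{-1})$.
\item The mixed region $u\le1\le v$, together with its symmetric counterpart. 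Use $g(u)\le u$ and then the auxiliary estimate in $v$. This gives at most $\int_0^1 u^{-\alpha}(u/B)^\alpha\,{\mathrm d}u\cdot\Gamma(1-\alpha)/\alpha=O(B^{-\alpha})$.
\item The replacement error on $\{u,v\ge1\}$. Here $1-g(u)g(v)\le \e^{-u}+\e^{-v}$. The $\e^{-u}$ part is bounded via the auxiliary estimate by $B^{-\alpha}\frac{\Gamma(1-\alpha)}{\alpha}\int_1^\infty \e^{-u}u^{-1}\,{\mathrm d}u=O(B^{-\alpha})$, and the $\e^{-v}$ part is symmetric.
\end{enumerate}
Summing the main term and the error terms gives the claim.

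The main obstacle I expect is the bookkeeping in the main term. One must check that the $\log t$ weight from the hyperbolic region produces exactly $\log B$ with coefficient $\Gamma(1-\alpha)/\alpha$, and that no other region also contributes a logarithm. The naive expansion $g(xy)\approx xy$ shown before the statement diverges precisely because it discards the cutoff $uv\approx B$. I would therefore double-check that in each error region at least one variable carries an integrable factor ($u^{-\alpha}$ on $[0,1]$, or $\e^{-u}u^{-1}$ on $[1,\infty)$), so that only a single power $B^{-\alpha}$ remains and no logarithm survives.
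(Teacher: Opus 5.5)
Your proof is correct. Its skeleton matches the paper's. The paper splits according to whether $x$ or $y$ lies below $a^{-1/\alpha}/\tau_\alpha$, which in your rescaled variables is exactly the split at $u=1$ and $v=1$. Its bounds on the region where both variables are small ($\bar{C}_2$) and on the mixed region ($\bar{C}_3$) are in substance your items 1 and 2; for $\bar{C}_3$ it splits at $xy=1$ instead of using your exact scaling identity.

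The routes differ at the two points where the paper leans on outside results.

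For the main term, the paper performs your rescaling and recognises $\alpha^2 J(B)=\EX[1-\e^{-W_1W_2/B}]$. It then quotes from Avena et al.\ the asymptotic $\EX[1-\e^{-\delta W_1W_2}]=(1+o(1))\,\alpha\Gamma(1-\alpha)\,\delta^{\alpha}\log(1/\delta)$. You derive this yourself via $t=uv$. That makes the origin of the logarithm (the hyperbolic region $u,v\ge 1$, $uv\lesssim B$) transparent and gives an explicit remainder.

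For the replacement error $\bar{e}_1(a)$, the paper introduces an exponentially tilted variable $X$ and invokes a Breiman-type result that $XW$ is regularly varying with index $\alpha$. You obtain the same $O(B^{-\alpha})$ bound in one line from $\int_0^\infty g(uv/B)\,v^{-(\alpha+1)}\,{\mathrm d}v=(u/B)^{\alpha}\Gamma(1-\alpha)/\alpha$.

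Beyond being self-contained, your version actually proves the error term as stated. With only a $(1+o(1))$ factor on the main term, the paper's argument gives $\bar{C}(a)=2\Gamma(1-\alpha)\,(\log a/a^2)\,(1+o(1))$, i.e.\ an error $o(\log a/a^2)$. Your estimate $J(B)=\frac{\Gamma(1-\alpha)}{\alpha}B^{-\alpha}\log B+O(B^{-\alpha})$ delivers the $O(a^{-2})$ remainder claimed in the lemma.
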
 

\begin{proof}
    We again split the integral conveniently. We split the integral depending on whether $x$ or $y$ is in $[0,a^{-1/\alpha}/{\tau_\alpha}]$ or not. This gives
    \begin{equation}
        \bar{C}(a)=\bar{C}_1(a)+\bar{C}_2(a)+\bar{C}_3(a),
    \end{equation}
where
\begin{equation}
    \bar{C}_1(a)=\frac{\alpha^2}{a^2} \int_{a^{-1/\alpha}/{\tau_\alpha}}^{\infty} \int_{a^{-1/\alpha}/{\tau_\alpha}}^{\infty}[1-\e^{-xa^{1/\alpha} {\tau_\alpha}}][1-\e^{-ya^{1/\alpha} {\tau_\alpha}}][1-\e^{-xy}](xy)^{-(\alpha+1)}
	{\mathrm d}x {\mathrm d}y,
\end{equation}
\begin{equation}
	\bar{C}_2(a)=\frac{\alpha^2}{a^2} \int_0^{a^{-1/\alpha}/{\tau_\alpha}}\int_0^{a^{-1/\alpha}/{\tau_\alpha}}[1-\e^{-xa^{1/\alpha} {\tau_\alpha}}][1-\e^{-ya^{1/\alpha} {\tau_\alpha}}][1-\e^{-xy}](xy)^{-(\alpha+1)}
	{\mathrm d}x {\mathrm d}y.
\end{equation}
We bound, again using~\eqref{single-integral} and $1-\e^{-xy}\leq xy$,
\begin{align}
    \bar{C}_2(a)& \leq \frac{\alpha^2}{a^2}\int_{0}^{a^{-1/\alpha}/{\tau_\alpha}} \int_{0}^{a^{-1/\alpha}/{\tau_\alpha}} [1-e^{-xa^{1/\alpha} {\tau_\alpha}}][1-\e^{-ya^{1/\alpha} {\tau_\alpha}}](xy)^{-\alpha}
	{\mathrm d}x {\mathrm d}y\\
	&=\frac{\alpha^2}{a^2} \Big(\int_{0}^{a^{-1/\alpha}/{\tau_\alpha}} y^{-\alpha}
	{\mathrm d}y\Big)^2\nn\\
    &=\Theta(1)a^{-2}(a^{-1/\alpha})^{2(1-\alpha)}=o(a^{-2}).\nn
\end{align}
\smallskip

We continue with $\bar{C}_3(a)$, which is the most involved. We bound $1-\e^{-ya^{1/\alpha} {\tau_\alpha}}\leq ya^{1/\alpha}$ to obtain
\begin{equation}
    \bar{C}_3(a)\leq \frac{2\alpha^2 a^{1/\alpha}}{a^2}\int_{0}^{a^{-1/\alpha}/{\tau_\alpha}} \int_{a^{-1/\alpha}/{\tau_\alpha}}^\infty
	(xy)^{-(\alpha+1)} y [1-\e^{-xy}]
	{\mathrm d}x {\mathrm d}y.
\end{equation}
We bound $1-\e^{-xy}\leq (xy)\wedge 1$. The integral where $x\leq 1/y$ is bounded by
\begin{equation}
    \Theta(1) a^{-2+1/\alpha}
	\int_{0}^{a^{-1/\alpha}/{\tau_\alpha}} y^{-\alpha+1} \int_{0}^{1/y}
	x^{-\alpha}
	{\mathrm d}x {\mathrm d}y \leq \Theta(1)a^{-2+1/\alpha} \int_{0}^{a^{-1/\alpha}/{\tau_\alpha}} {\mathrm d}y\leq \Theta(1)a^{-2}.
\end{equation}
Similarly, the integral where $x>1/y$ is bounded by
\begin{equation}
    \Theta(1) a^{-2+1/\alpha}
	\int_{0}^{a^{-1/\alpha}/{\tau_\alpha}} y^{-\alpha} \int_{1/y}^{\infty}
	x^{-(\alpha+1)}
	{\mathrm d}x {\mathrm d}y\leq \Theta(1)a^{-2+1/\alpha} \int_{0}^{a^{-1/\alpha}/{\tau_\alpha}}{\mathrm d}y\leq \Theta(1)a^{-2}.
\end{equation}
This shows that $\bar{C}_3(a)=O(a^{-2})$.
\smallskip

We finally compute $\bar{C}_1(a)$, which we rewrite as
\begin{align}
    \label{c1-bar-bd}
	\bar{C}_1(a) &=\alpha^2 \tau_\alpha^{2\alpha} 
	\int_{1}^{\infty} \int_{1}^{\infty}[1-\e^{-x}][1-\e^{-y}][1-\e^{-xya^{-2/\alpha}/\tau_\alpha^2}](xy)^{-(\alpha+1)}
	{\mathrm d}x {\mathrm d}y \\
	&=\tau_\alpha^{2\alpha}  \EX[1-\e^{-W_1W_2 a^{-2/\alpha}/\tau_\alpha^2}]+\bar{e}_1(a),\nn
\end{align}
where we let $\bar{e}_1(a)$ denote the contribution due to $\e^{-x}$ and $\e^{-y}$. We use~\cite[(4.6)]{avena2022inhomogeneous}, which states that 
\begin{equation}
    \EX[1-\e^{-\delta W_1 W_2}] =(1+o(1)) \alpha \Gamma(1-\alpha) \delta^{\alpha} \log(1/\delta).
\end{equation}

This gives

\begin{equation}
    \bar{C}_1(a)=\tau_\alpha^{2\alpha} \alpha  \tau_\alpha^{-2\alpha} \frac{2}{\alpha}\Gamma(1-\alpha) \frac{\log{a}}{a^2}+\bar{e}_1(a)
	=2\Gamma(1-\alpha)\frac{\log{a}}{a^2}+\bar{e}_1(a).
\end{equation}
Finally,
\begin{align}
    \bar{e}_1(a) & \leq 2\alpha^2 \tau_\alpha^{2\alpha} 
	\int_{1}^{\infty} \int_{1}^{\infty}\e^{-x}[1-\e^{-xya^{-2/\alpha}/\tau_\alpha^2}](xy)^{-(\alpha+1)}
	{\mathrm d}x {\mathrm d}y \\
	&=2\tau_\alpha^{2\alpha} \EX\big[\e^{-W_1}(1-\e^{-W_1W_2 a^{-2/\alpha}/\tau_\alpha^2})\big]=O(a^{-2}).\nn
\end{align}
This can be seen by letting $X$ be defined by
\begin{equation}
    \prob(X\in [a,b])=\frac{1}{\EX\big[\e^{-W_1}]} \EX\big[\e^{-W_1} \mathbbm{1}_{\{X\in [a,b]\}}\big],
\end{equation}
so that
\begin{equation}
    \bar{e}_1(a)=\Theta(1)  \EX\big[1-\e^{-XW a^{-2/\alpha}/\tau_\alpha^2}\big].
\end{equation}

Since $X$ has thin tails (it has an exponential moment), the random variable $XW$ is regularly varying with exponent $\alpha$, so that
\begin{equation}
    \EX\big[1-\e^{-XW a^{-2/\alpha}/\tau_\alpha^2}\big]=\Theta(a^{-2}),
\end{equation}
as claimed. The above follows from the fact that $XW$ is again heavy-tailed with tail exponent $\alpha$, as proved in~\cite[Proposition 5.2]{LeiSauKon23}.
\end{proof}

\section{Computation of Average Clustering Coefficient}

The limiting asymptotics for hubs and leaves allows us to derive an estimate of the average clustering coefficient of the network $\bar{C}$. Indeed, one needs to integrate the annealed clustering function $\bar{C}(a)$ over the distribution $P(a)$ of the reduced degree $a$, and from Figure~\ref{fig:clusterings} it is clear that one can split the integral into the two leaf ($L$) and hub ($H$) regimes as

\begin{align}
    \bar{C} =  \int_{a_{min}}^{\infty} {\mathrm d}a \bar{C}(a) P(a)
    = \int_{a_{min}}^{a^*} {\mathrm d}a \bar{C}_L(a) P(a) + \int_{a^*}^{\infty} {\mathrm d}a \bar{C}_H(a) P(a),
    \label{eq:avg_cc}
\end{align}
where $C_{H}(a) \sim 2 \Gamma(1-\alpha) \frac{\log a}{a^2}$ (for $a>a^*$) is the asymptotic clustering function of hubs, while $C_{L}(a)\sim 1$ (for $a_{min}<a<a^*$) is the asymptotic plateau for leaves, $a_{min}$ is the minimal reduced degree above which the clustering function starts to have meaning, and $a^*$ is an appropriate crossover value between the two regimes. This crossover is visible in Figure~\ref{fig:clusterings}, although its value is not easily identified. Luckily, as we will show in the following computations, the actual value of $a^*$ is not necessary for the computation of the clustering to go through, and we can proceed without specifying it.

The main issue in the evaluation of the integral in~\eqref{eq:avg_cc} is that the degree distribution is not known everywhere but only in the tail~\cite{avena2022inhomogeneous},  where it has the asymptotic behaviour $\Gamma(1-\alpha) k^{-2}$. Transforming to the distribution $P(a)$ of the reduced degree $a = \frac{k}{\sqrt{n}}$,
\begin{equation}
    P(a) \sim \frac{\Gamma(1-\alpha)}{\sqrt{n}} a^{-2}.
\end{equation}

Nonetheless, the first integral can be evaluated by realizing that by definition it evaluates to the fraction of leaves nodes or, equivalently

\begin{equation}
    \int_{a_{min}}^{a^*} {\mathrm d}a \bar{C}_L(a) P(a) = \int_{a_{min}}^{a^*} {\mathrm d}a  P(a) = \begin{cases}
        1 - \int_{a^*}^{\infty} da P(a) \quad &\text{if } C_{k<2} \text{ undefined,} \\
         1 - r_{0/1} - \int_{a^*}^{\infty} da P(a) \quad &\text{if } C_{k<2} = 0,
    \end{cases} 
\end{equation}
and the last term can be recomprised in the second integral as it is evaluated on the same interval, so that the average clustering coefficient has the expression
\begin{equation}
    \bar{C} = \begin{cases}
        1 + \int_{a^*}^{\infty} P(a) (\bar{C}_H(a) - 1)& \text{ if } C_{k<2} \text{ undefined,} \\
        1 - r_{0/1} +\int_{a^*}^{\infty} da P(a) (\bar{C}_H(a) - 1) & \text{ if } C_{k<2} = 0,
    \end{cases}
\end{equation}

We can evaluate this last integral as
\begin{eqnarray}
    \int_{a^*}^{\infty} da P(a) (\bar{C}_H(a) - 1) &=& \int_{a^*}^{\infty} da \left[ 2 \Gamma(1-\alpha) \frac{\log a}{a^2} - 1\right]\frac{\Gamma(1-\alpha)}{a^2 \sqrt{n}}\\
    &=& \frac{\Gamma(1-\alpha)}{\sqrt{n}} \frac{6 \Gamma(1-\alpha) \log a^* + 2 \Gamma(1-\alpha) - 9 (a^*)^2}{ 9 (a^*)^3}\nn\\
    &=& o(1),\nn
\end{eqnarray}
and see that it is suppressed by a square root of $n$ term, whatever the value of $a^*$. 

\section{Computation of $r_{0/1}$}
\label{app:rzerouno}
We start by computing the fraction of nodes that are disconnected from the graph as
\begin{eqnarray}
    r_{0} \equiv \frac{1}{n}\sum_{i=1}^n \prod_{j\colon j\neq i} (1-p_{ij})
    &=& \frac{1}{n}\sum_{i=1}^n\e^{-\delta_n w_i \sum_{j\neq i}w_j}\label{r0-computation-rep},
\end{eqnarray}
where we recall that  $\delta_n=n^{-1/\alpha}$. Since the $j=i$ term in the sum in the exponential makes little difference, we can approximate
\begin{eqnarray}
    r_{0}
    &\approx& \frac{1}{n}\sum_{i=1}^n\e^{-\delta_n w_i \sum_{j}w_j}\label{r0-computation-rep2}.
\end{eqnarray}
We next use that most of the weights have normal size, and they contribute most to the sum over $i\in\{1, \ldots, n\}$. Thus, 
\begin{eqnarray}
    r_{0}
    &\approx& \EX\Big[\e^{- W S_n}\mid S_n\Big]\label{r0-computation-rep3},
\end{eqnarray}
where we take the expectation over the Pareto variable $W$ in the above, we abbreviate
    \begin{equation}
    \label{rescaled-total-weight}
        S_n=\delta_n\sum_{j=1}^nw_j,
    \end{equation}
and condition on this total rescaled weight. We note that the random variable $S_n$ converges in distribution to an $\alpha$-stable distribution, since the random variables $(w_i)_{i=1}^n$ are Pareto variables with infinite mean. Thus,
\begin{eqnarray}
    r_{0}
    &\approx& \EX\Big[\e^{- W \mathcal{S}}\mid \mathcal{S}\Big]\label{r0-computation-rep4},
\end{eqnarray}
where $\mathcal{S}$ is an $\alpha$-stable random variable. 
Equation~\eqref{r0-computation-rep4} requires some interpretation. What it means is that the proportion of vertices with $r_0$ degree 0, given the vertex weights $(w_i)_{i=1}^n$, is close to the rhs of~\eqref{r0-computation-rep3}, i.e., it holds that
$r_0\approx \EX\Big[\e^{- W S_n}\mid S_n\Big]$, where the rescaled total weight $S_n$ is defined in~\eqref{rescaled-total-weight}.
In particular, this means that, when simulating the network several times (and thus redrawing the vertex weights), the proportion of vertices with degree 0 {\em fluctuates macroscopically}, even in the large graph limit. This is a type of {\em non-self-averaging} phenomenon that is rather unusual in network science. The simulations shown in Figures~\ref{fig:pan_03}~\ref{fig:pan_05}~\ref{fig:pan_07} confirm that~\eqref{r0-computation-rep4} is an excellent approximation of $r_0$, and that it remains truly random in the large-graph limit.
We can also use~\eqref{r0-computation-rep4} us to relate our approximation to~\cite[Proposition 4]{avena2022inhomogeneous}, in which $\EX[r_0]$ is shown to be approximated by
    \begin{equation}
    \EX[r_{0}]
    \approx \EX\Big[\e^{- \Gamma(1-\alpha)W^\alpha} \Big],
    \end{equation}
which indeed equals $\EX\big[\e^{- W \mathcal{S}}\big]$, since, for every $a\geq 0$,
    \begin{equation}
    \EX\Big[\e^{- a \mathcal{S}}\Big]
    =\e^{- \Gamma(1-\alpha)a^\alpha}.
    \end{equation}
This follows from~\cite[Theorem 5]{avena2022inhomogeneous}. For a practical computation of the approximation of $r_0$, we use the fact, again for $a\geq 0$,
    \begin{equation}
    \label{MGF-Pareto}
    \EX\big[\e^{- a W}\big]
    =\alpha a^{\alpha} \Gamma(-\alpha, a),
    \end{equation}
where we recall that $\Gamma(r, a)$ is the incomplete gamma function. Applying this to~\eqref{r0-computation-rep3}, we are led to
\begin{equation}
\label{r0-approximation}
    r_{0}
    \approx \alpha S_n^\alpha \Gamma(-\alpha, S_n).
\end{equation}

We show in Figures~\ref{fig:pan_03}~\ref{fig:pan_05}~\ref{fig:pan_07} that~\eqref{r0-approximation} is in very good accordance with the actual fraction of disconnected nodes (central panels).

We extend the above analysis to the proportion $r_1$ of vertices of degree 1. We start from
\begin{eqnarray}
\label{r1-first-step}
    r_{1} &\equiv& \frac{1}{n}\sum_{i,k\colon i\neq k} p_{ik} \prod_{j\colon j\neq i,k} (1-p_{ij})\\
    &=& \frac{1}{n}\sum_{i,k\colon i\neq k}
    (1-\e^{-\delta_n w_iw_k})
    \e^{-\delta_n w_i \sum_{j\colon j\neq i, k} w_j}\nn\\
    &\approx & \sum_{k}
    \EX\Big[(1-\e^{- W \delta_n w_k})
    \e^{-W \delta_n\sum_{j\colon j\neq k} w_j}\Big],\nn
\end{eqnarray}
where the asymptotics follows from the fact that the main contribution in the sum over $i$ arises from normal weight vertices, so that the average over $i$ can be replaced by an expectation with respect to $W$.
\smallskip

Note that the dominant contributions to the sum over $k$ in~\eqref{r1-first-step} correspond to those $k$ for which $w_k$ is of the order $1/\delta_n=n^{1/\alpha},$ which means that these are the maximal-weight vertices. This will be the guiding principle to derive the asymptotics of the above formula.
\smallskip

For simplicity, and without loss of generality, we will assume that the weights are {\em ordered} from large to small, so that $w_1\geq w_2\cdots\geq w_n$. We can rewrite
    \begin{equation}
        \delta_n\sum_{j\colon j\neq k} w_j
        =S_n-\delta_nw_k
        =S_n-y_k^{\sss(n)},
    \end{equation}
where we recall $S_n$ from~\eqref{rescaled-total-weight}, and we define the rescaled vertex weights $(y_k^{\sss(n)})_{k\geq 1}$ as
    \begin{equation}
    \label{ordered-resceled-weights}
        y_k^{\sss(n)}=\delta_nw_k.
    \end{equation}
In terms of these definitions, we obtain the finite-size approximation
    \begin{equation}
    \label{r1-approximation}
     r_1   \approx  \sum_{k}
    \EX\Big[(1-\e^{- W y_k^{\sss(n)}})
    \e^{-W (S_n- y_k^{\sss(n)})}\mid (y_k^{\sss(n)})_{k\geq 1}\Big].
    \end{equation}

This approximation is inspired by the asymptotics properties of $r_1$. Indeed, as $n\rightarrow \infty,$ we have the convergence in distribution 
    \begin{equation}
    \label{conv-sum-Pareto-order-statistics}
    \Big(S_n, (y_k^{\sss(n)})_{k\geq 1}\Big)
    \convd \Big(\sum_{j\geq 1} \Gamma_j^{-1/\alpha}, 
    (\Gamma_k^{-1/\alpha})_{k\geq 1}\Big),
    \end{equation}
where $\convd$ denotes convergence in distribution in the product topology, $(\Gamma_k)_{k\geq 1}$ are gamma random variables, i.e.,
    \begin{equation}
    \Gamma_k=\sum_{i=1}^k E_i,
    \end{equation}
where $(E_i)_{i\geq 1}$ are independent and identically distributed exponential random variables with mean 1. Because of this, we see that the random variables in~\eqref{r1-approximation} converge in distribution in the large graph limit, so that
    \begin{equation}
    r_1\convd 
    \sum_{k}
    \EX\Big[(1-\e^{- W y_k})
    \e^{-W (\mathcal{S}- y_k)}\mid (y_k)_{k\geq 1}\Big],
    \end{equation}
where
    \begin{equation}
     \Big(\mathcal{S}, (y_k)_{k\geq 1}\Big)
     \equiv \Big(\sum_{j\geq 1} \Gamma_j^{-1/\alpha}, 
    (\Gamma_k^{-1/\alpha})_{k\geq 1}\Big).
    \end{equation}
Thus, as for $r_0$, also $r_1$ converges in {\em distribution}, but it remains to fluctuate even in the large graph limit. 
\smallskip

For a practical computation of the approximation of $r_0$, we again use~\eqref{MGF-Pareto} on the representation
    \begin{equation}
    \label{r1-approximation-rep}
     r_1   \approx  \sum_{k=1}^n
    \EX\Big[\e^{-W (S_n-y_k^{\sss(n)})}
    -\e^{-WS_n}\mid (y_k^{\sss(n)})_{k\geq 1}\Big],
    \end{equation}
to arrive at
    \begin{equation}
    \label{r1-approximation-rep2}
     r_1   \approx  \alpha \sum_{k=1}^n
    \Big[(S_n-y_k^{\sss(n)})^\alpha \Gamma(-\alpha,S_n-y_k^{\sss(n)})
    -S_n^\alpha \Gamma(-\alpha, S_n)\Big],
    \end{equation}
which has the advantage that it no longer contains the expectation with respect to $W$. We show in Figures~\ref{fig:pan_03}~\ref{fig:pan_05}~\ref{fig:pan_07} that~\eqref{r1-approximation-rep2} is in very good accordance with the actual fraction of disconnected nodes (bottom panels).

\begin{figure}[h!]
    \centering
    \includegraphics[width=0.49\linewidth, height= 0.32\linewidth]{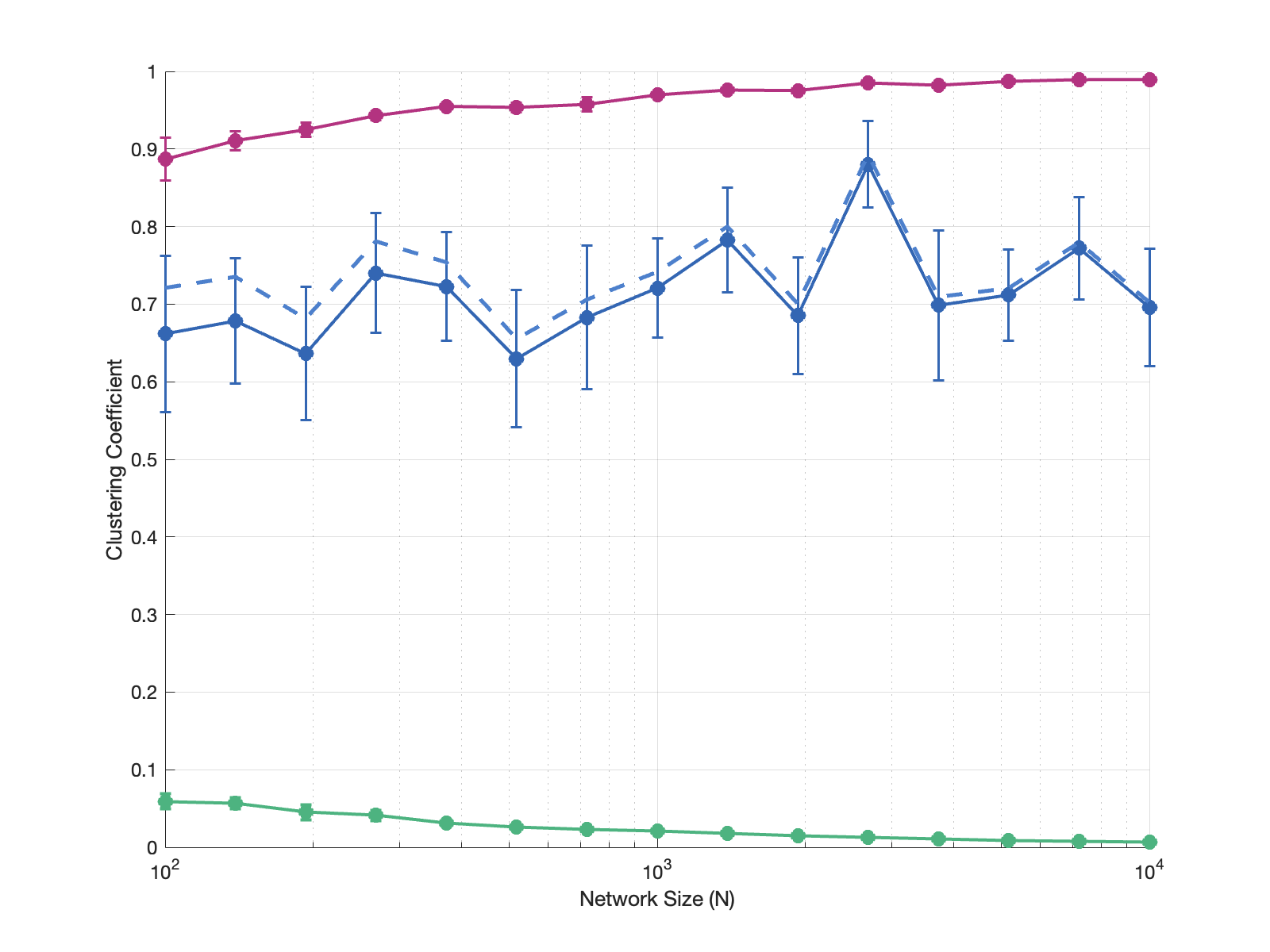}
    \includegraphics[width=0.49\linewidth, height= 0.32\linewidth]{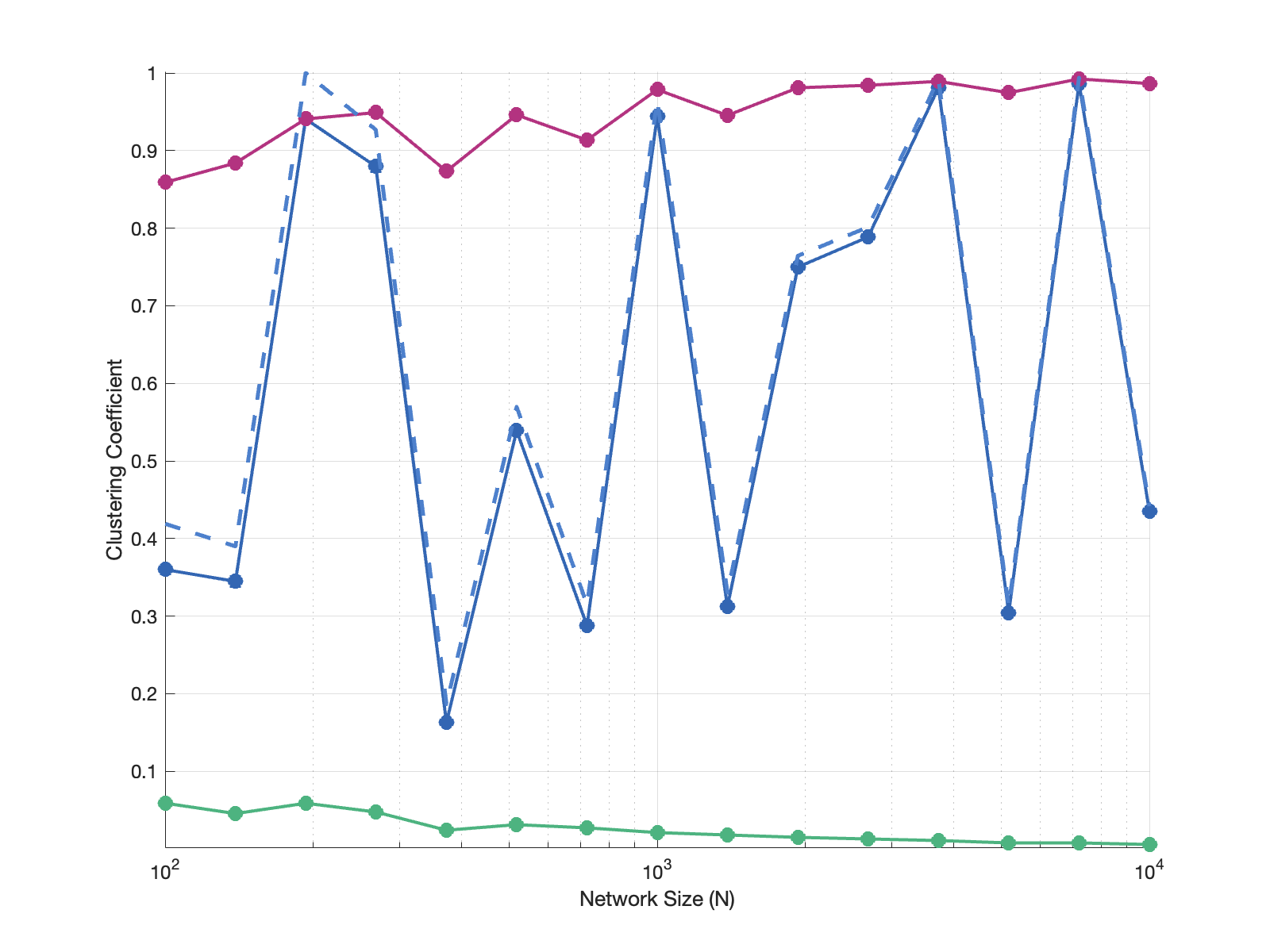}\\
    \includegraphics[width=0.49\linewidth, height= 0.32\linewidth]{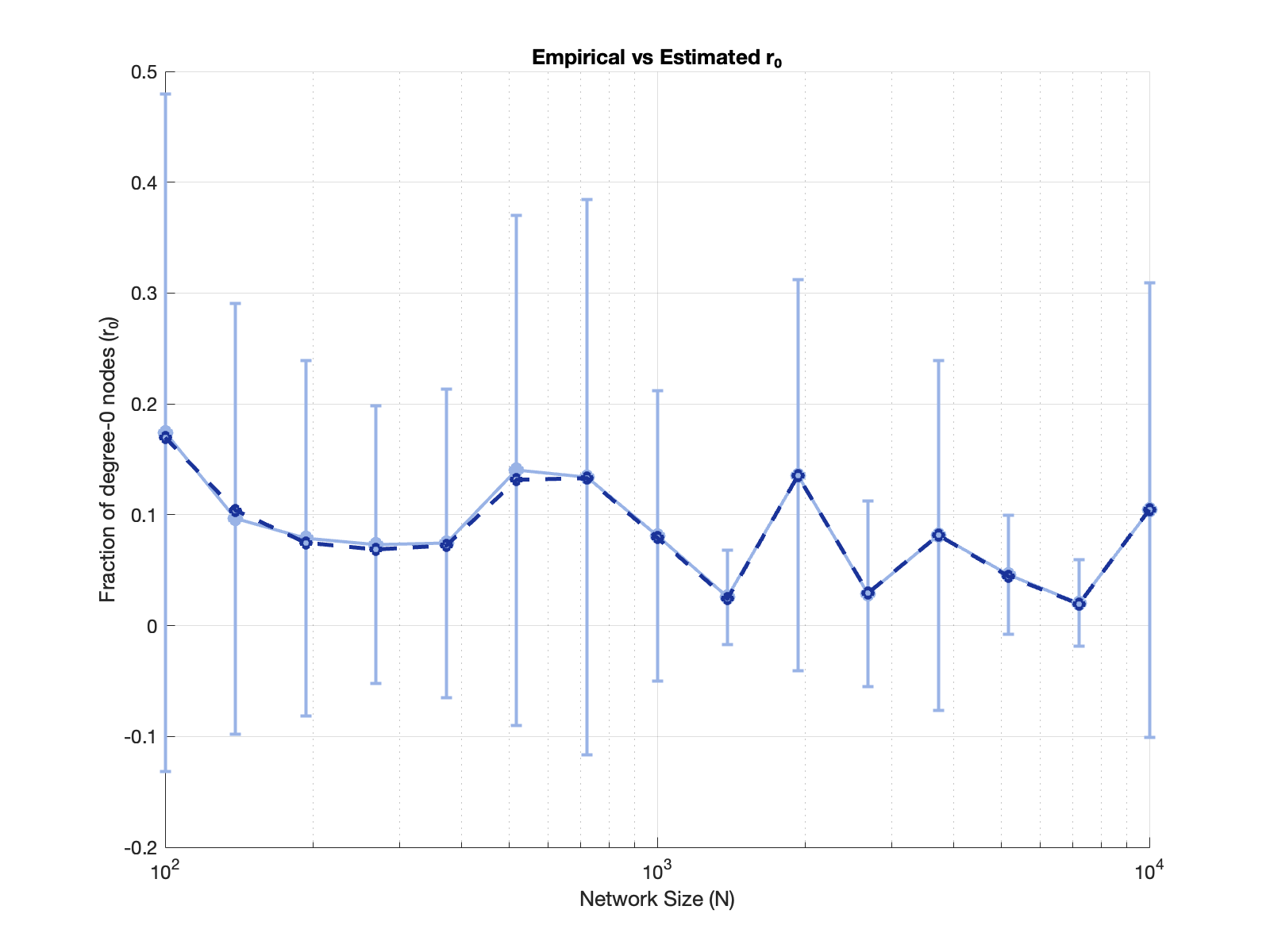}
    \includegraphics[width=0.49\linewidth, height= 0.32\linewidth]{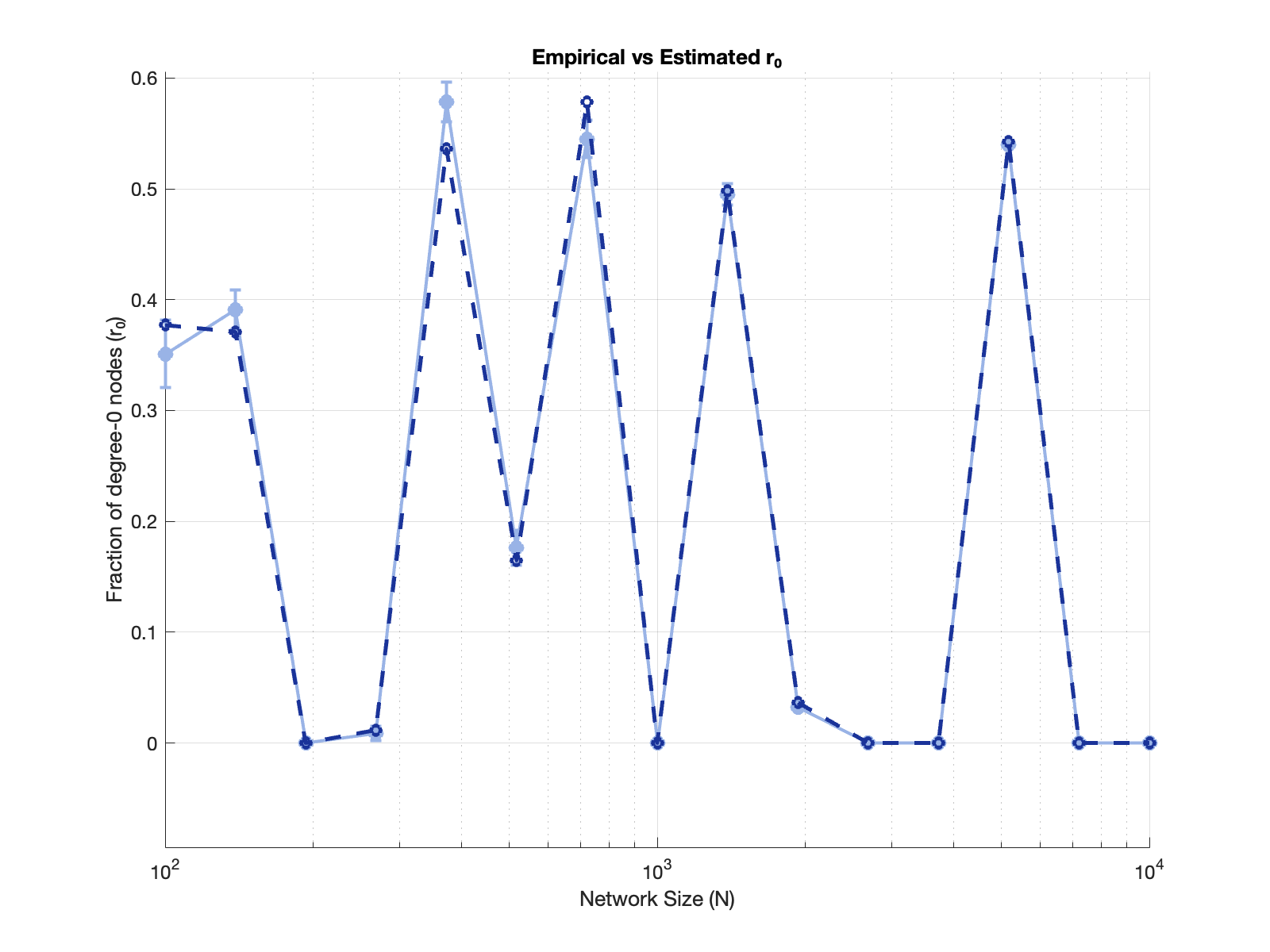}\\
    \includegraphics[width=0.49\linewidth, height= 0.32\linewidth]{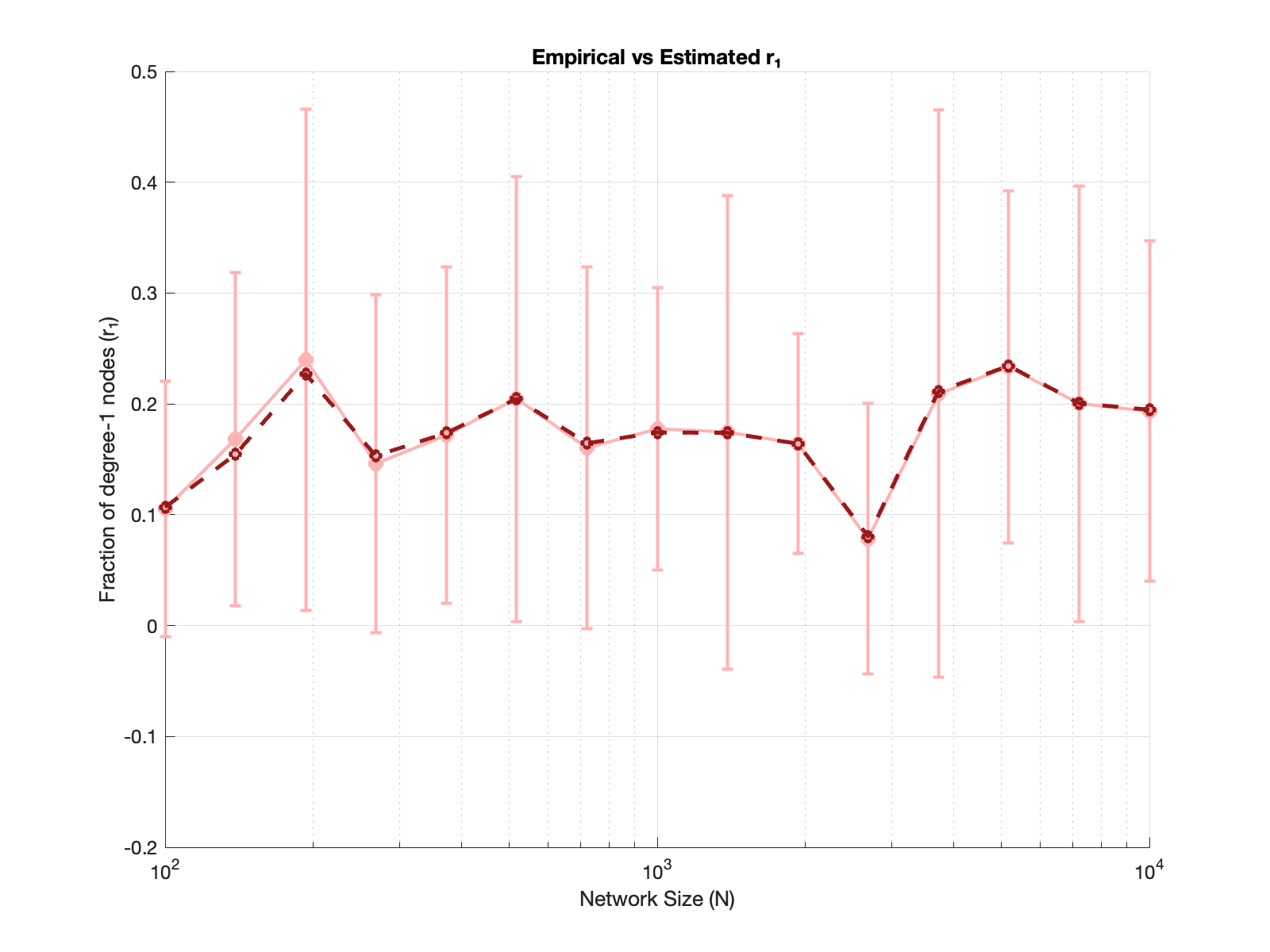}
    \includegraphics[width=0.49\linewidth, height= 0.32\linewidth]{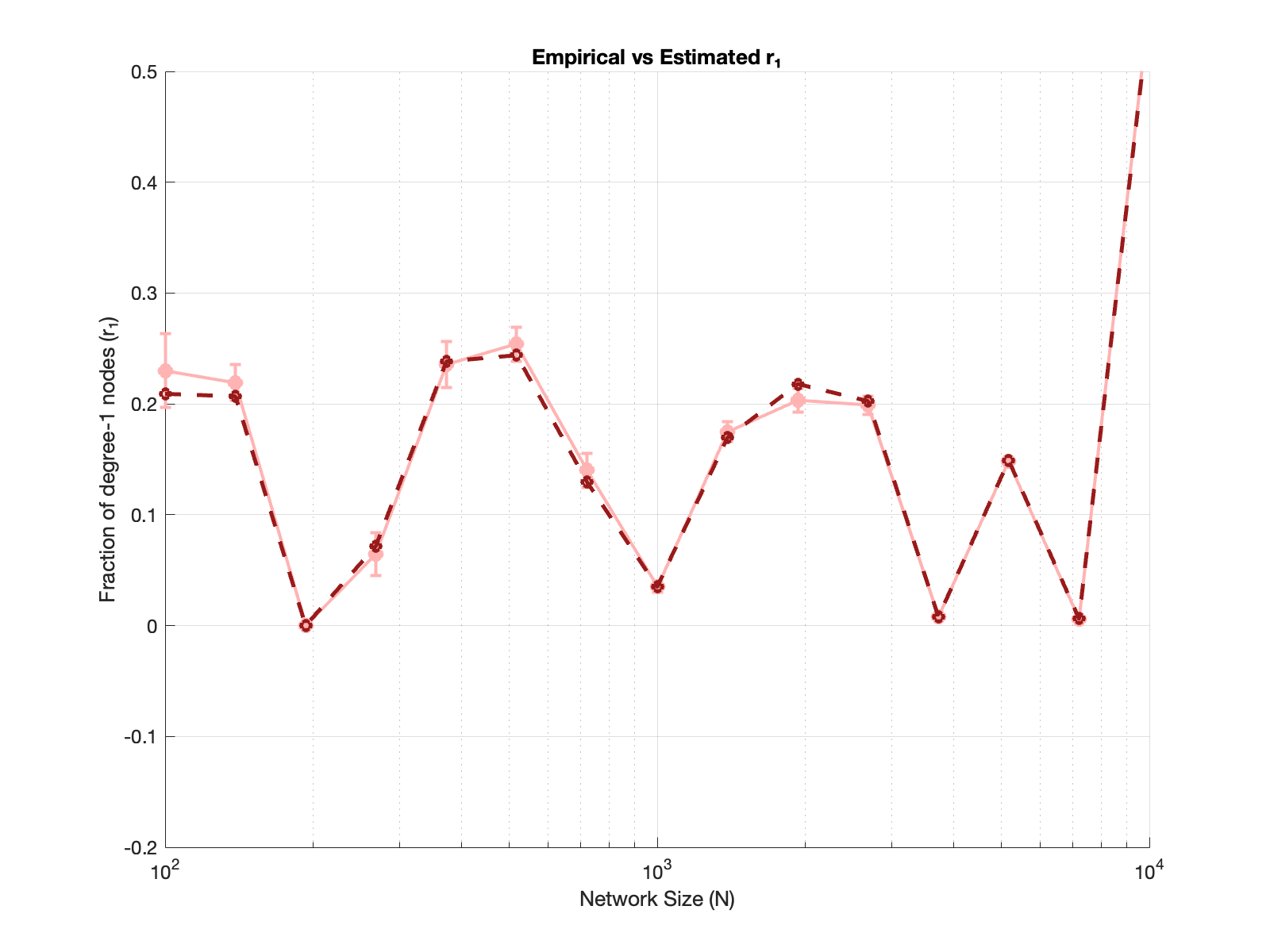}
    \caption{\textbf{Average local clustering coefficient $C$ and distance to $r_{0/1}$ versus network size $n$ (for $\alpha=0.3$).} 
    On the left, simulations are done by resampling weights every time an actual network is realized (sample of $10$), while on the left for each $n$ weights are extracted only once, and $10$ different adjacency matrices are realized from the same weights. On top
    we show the node-averaged local clustering coefficient ${C}$, both including (blue symbols) and excluding (purple symbols) nodes with degree $k=0,1$ (note that the latter evolves smoothly towards 1 with shrinking error bars as $n$ increases, while the former fluctuates with non-vanishing error bars, as a result of non-self-averaging).    
    The dashed blue line is $1$ minus the average of $r_{0/1}$ over realizations. Finally, in green the difference between $1-r_{0/1}$ and ${C}$ computed including nodes with $k<2$ (notice the shrinking error bars). In the middle, the light blue line with error bars is the actual value of $r_0$, while the dashed darker one is the approximation we derive in~\eqref{r0-approximation}. At the bottom, the light red line with error bars is the actual value of $r_1$, while the dashed darker one is the approximation we derive in~\eqref{r1-approximation}. }
    \label{fig:pan_03}
\end{figure}

\begin{figure}[h!]
    \centering
    \includegraphics[width=0.49\linewidth, height= 0.32\linewidth]{Figures/CC_avg.png}
    \includegraphics[width=0.49\linewidth, height= 0.32\linewidth]{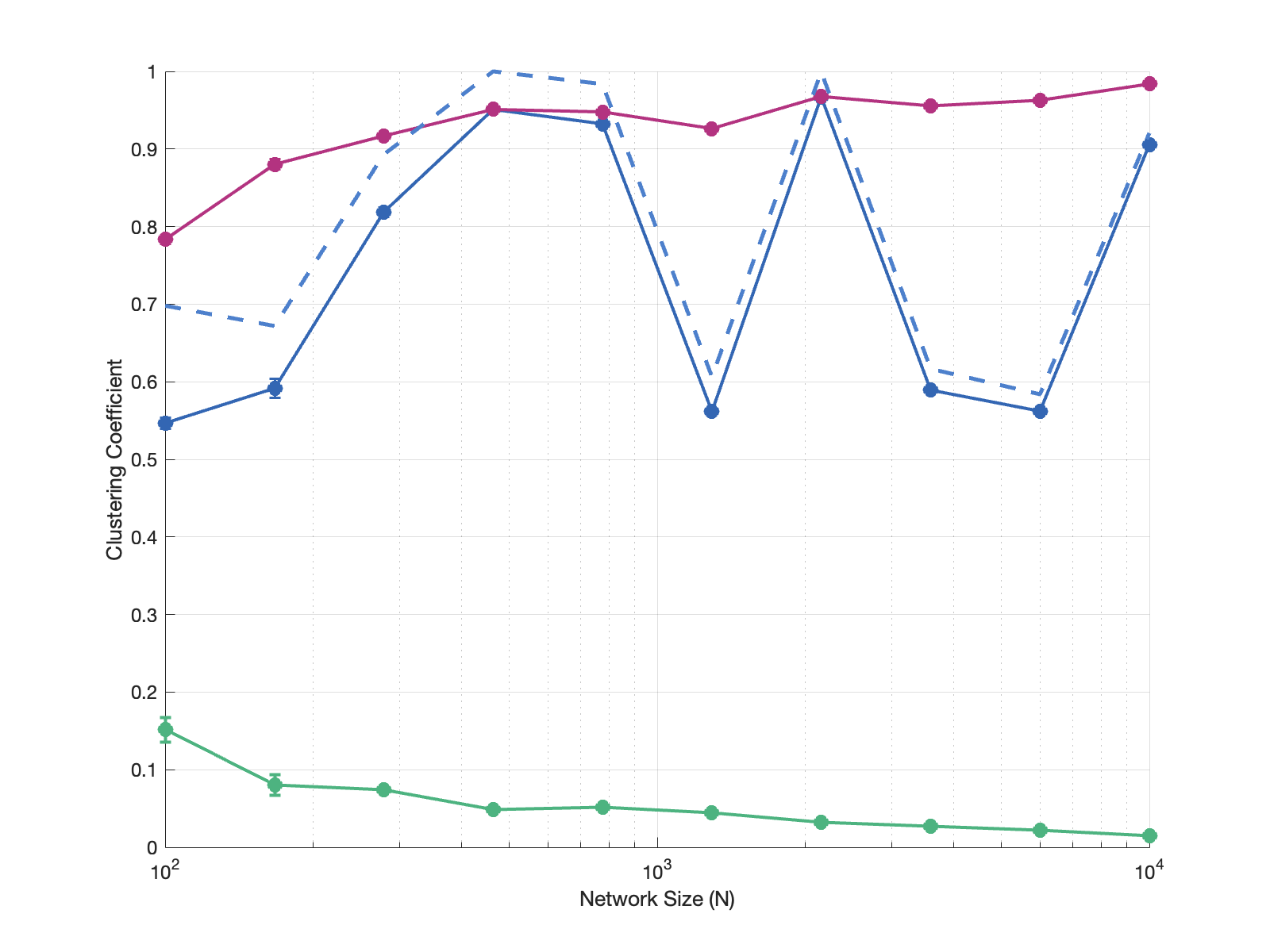}\\
    \includegraphics[width=0.49\linewidth, height= 0.32\linewidth]{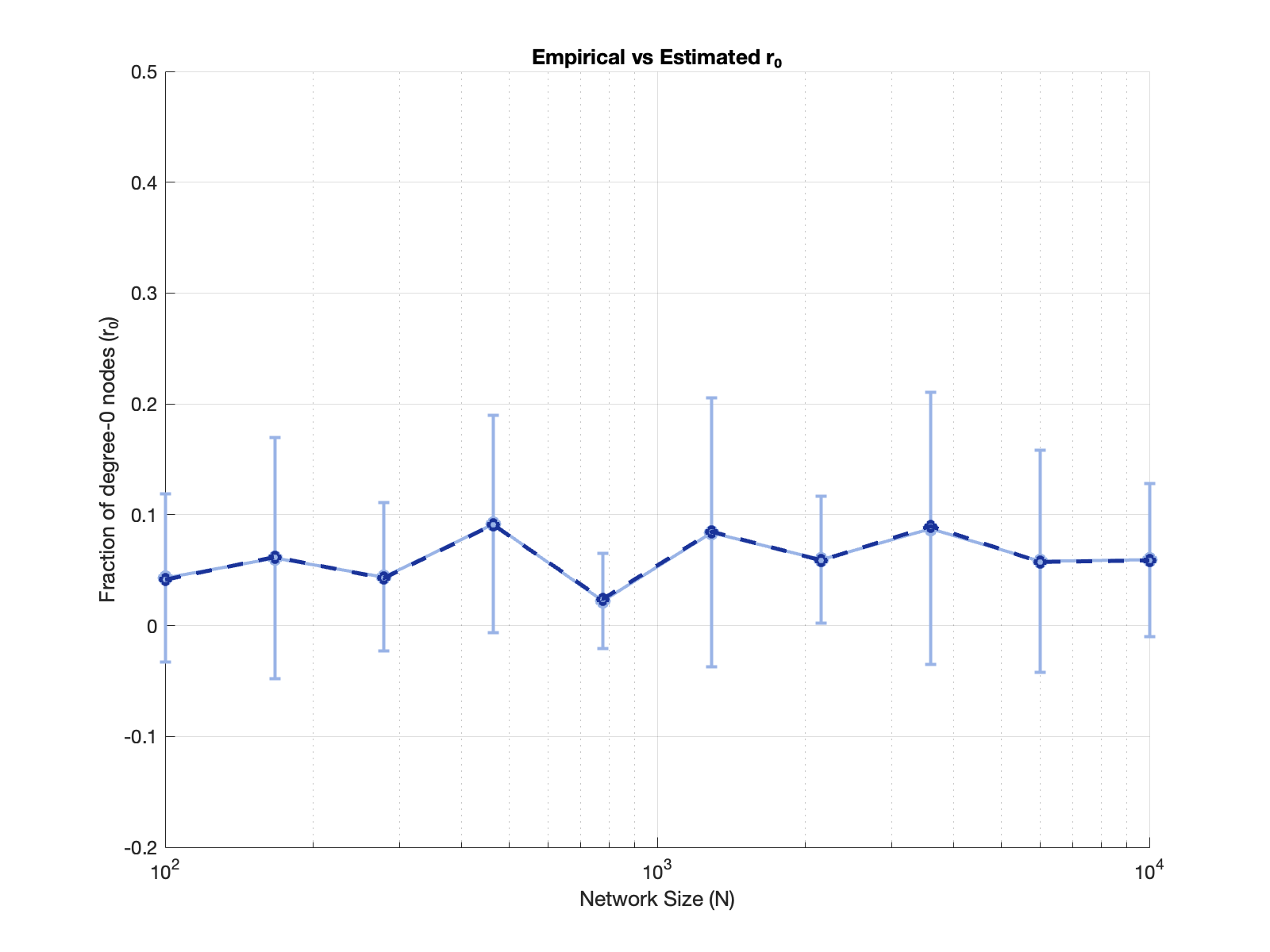}
    \includegraphics[width=0.49\linewidth, height= 0.32\linewidth]{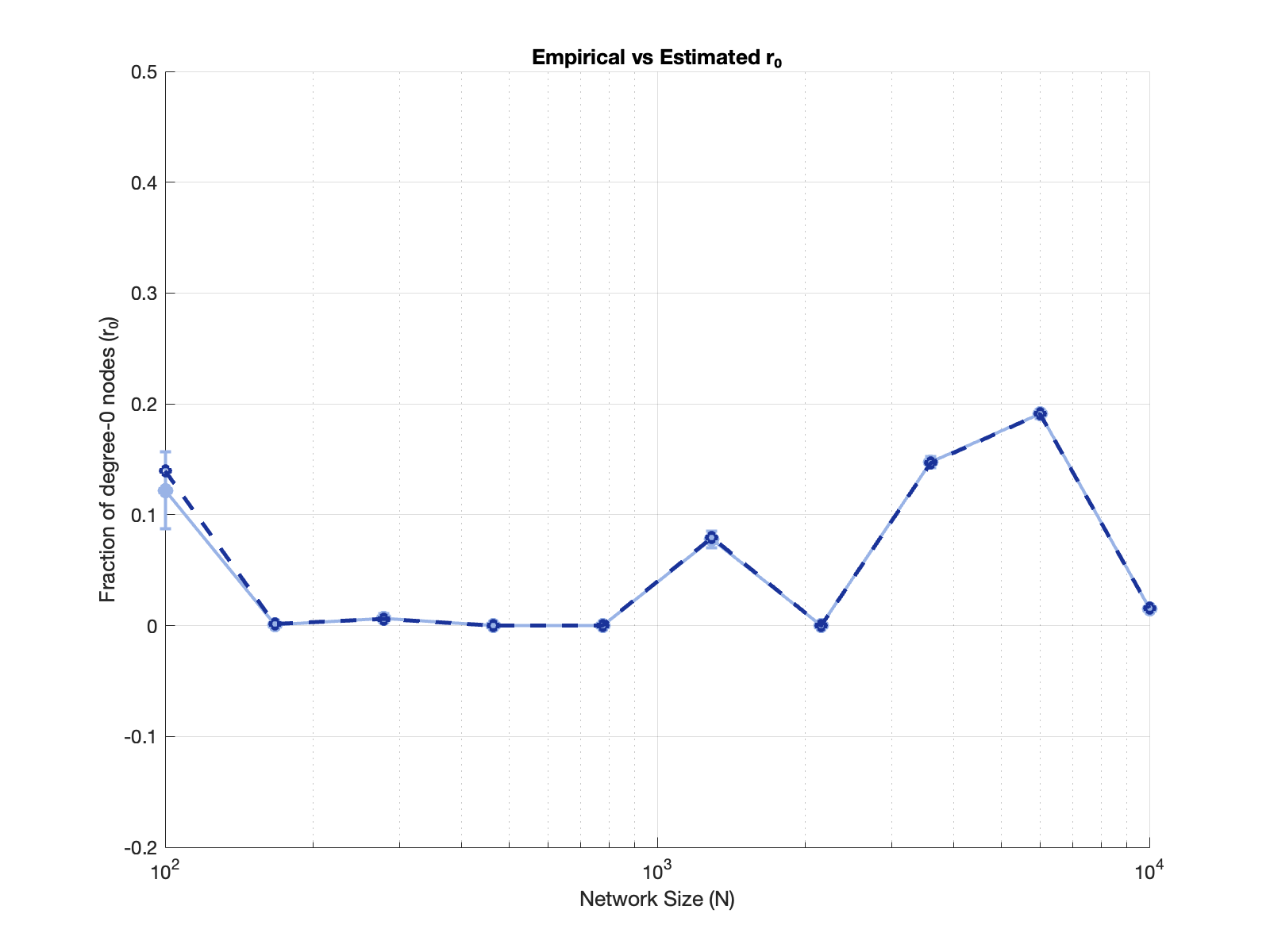}\\
    \includegraphics[width=0.49\linewidth, height= 0.32\linewidth]{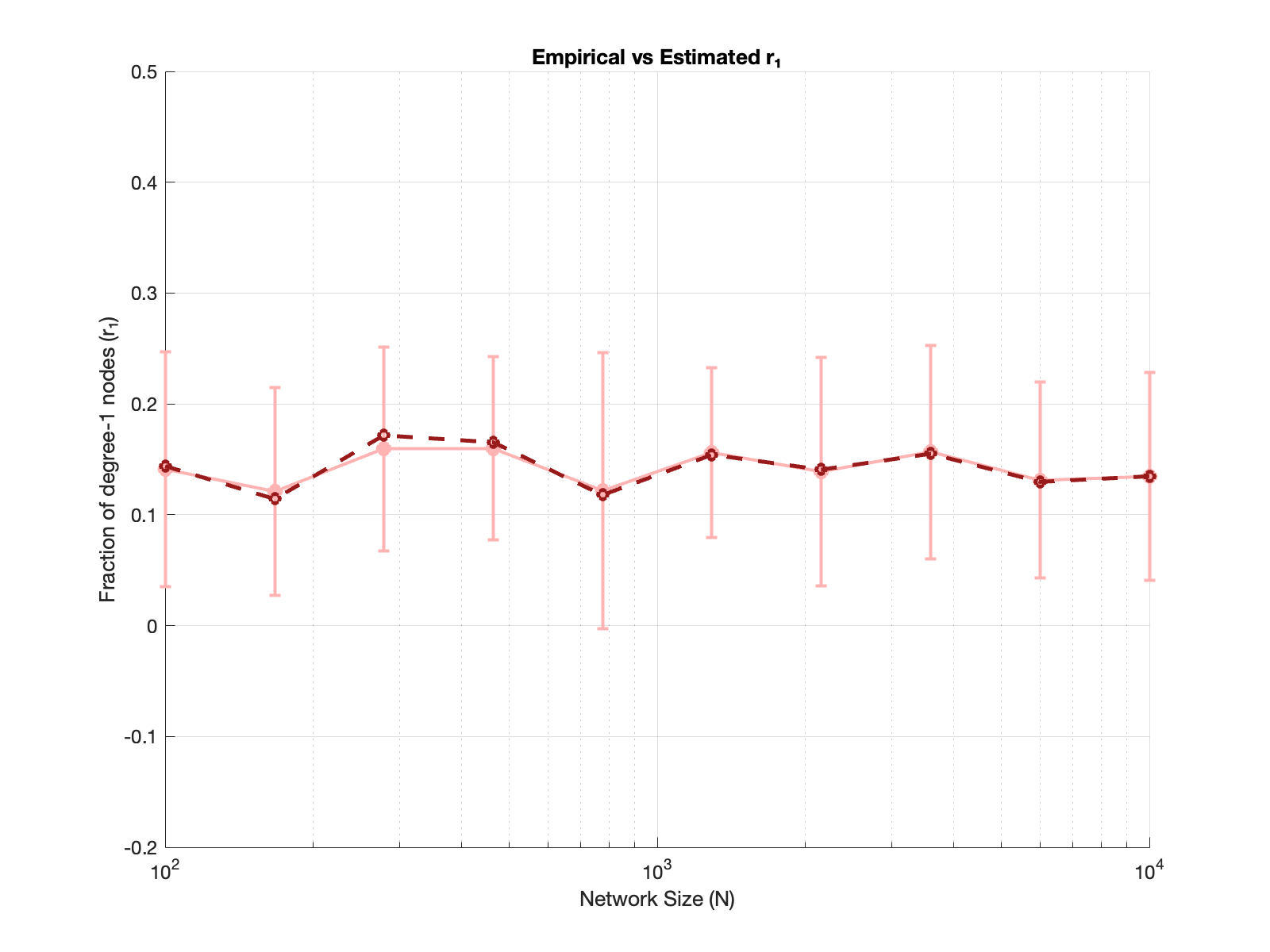}
    \includegraphics[width=0.49\linewidth, height= 0.32\linewidth]{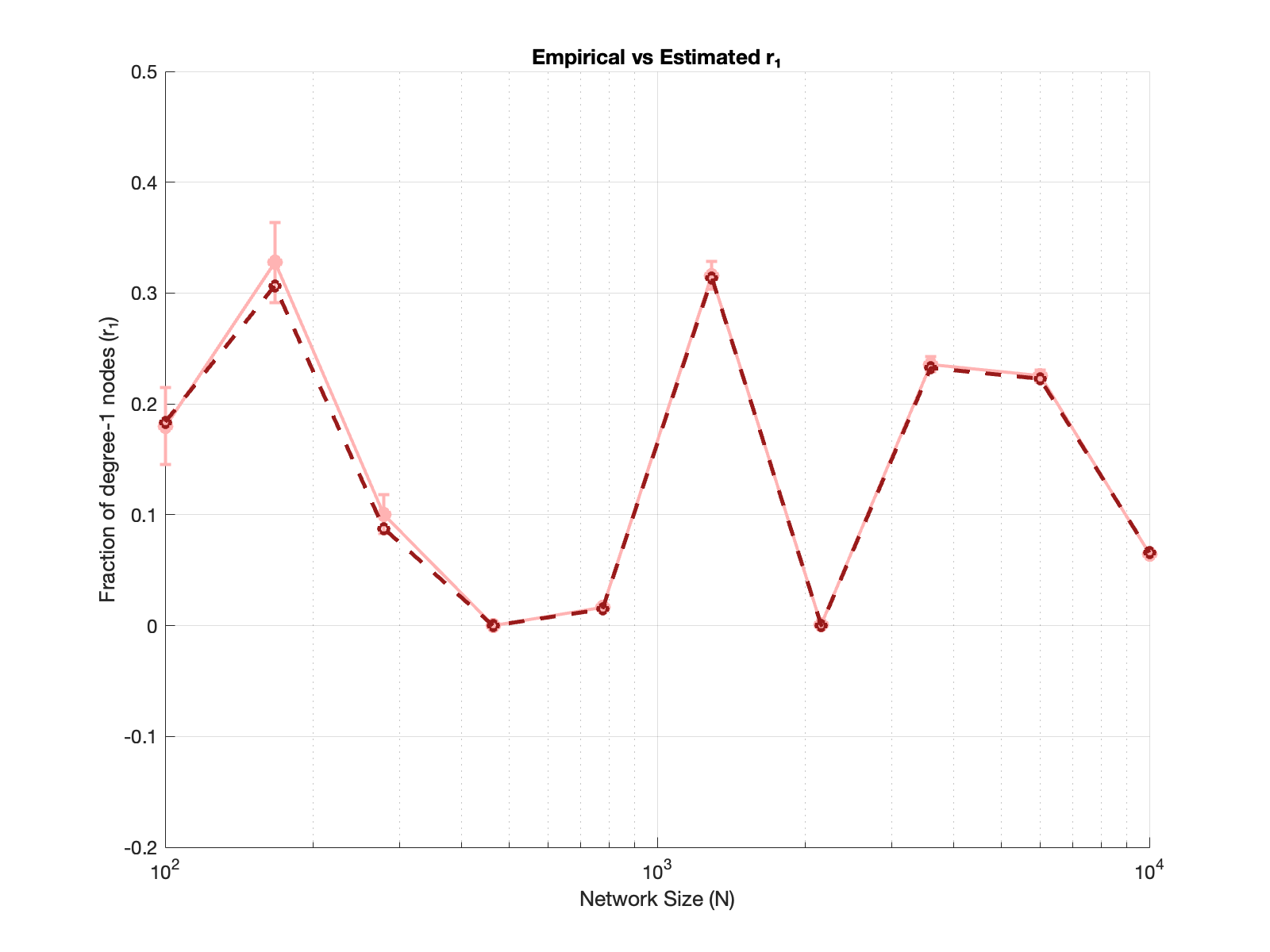}
    \caption{\textbf{Average local clustering coefficient $C$ and distance to $r_{0/1}$ versus network size $n$ (for $\alpha=0.5$).} 
    On the left, simulations are done by resampling weights every time an actual network is realized (sample of $10$), while on the left for each $n$ weights are extracted only once, and $10$ different adjacency matrices are realized from the same weights. On top
    we show the node-averaged local clustering coefficient ${C}$, both including (blue symbols) and excluding (purple symbols) nodes with degree $k=0,1$ (note that the latter evolves smoothly towards 1 with shrinking error bars as $n$ increases, while the former fluctuates with non-vanishing error bars, as a result of non-self-averaging).    
    The dashed blue line is $1$ minus the average of $r_{0/1}$ over realizations. Finally, in green the difference between $1-r_{0/1}$ and ${C}$ computed including nodes with $k<2$ (notice the shrinking error bars). In the middle, the light blue line with error bars is the actual value of $r_0$, while the dashed darker one is the approximation we derive in~\eqref{r0-approximation}. At the bottom, the light red line with error bars is the actual value of $r_1$, while the dashed darker one is the approximation we derive in~\eqref{r1-approximation}. }
    \label{fig:pan_05}
\end{figure}

\begin{figure}[h!]
    \centering
    \includegraphics[width=0.49\linewidth, height= 0.32\linewidth]{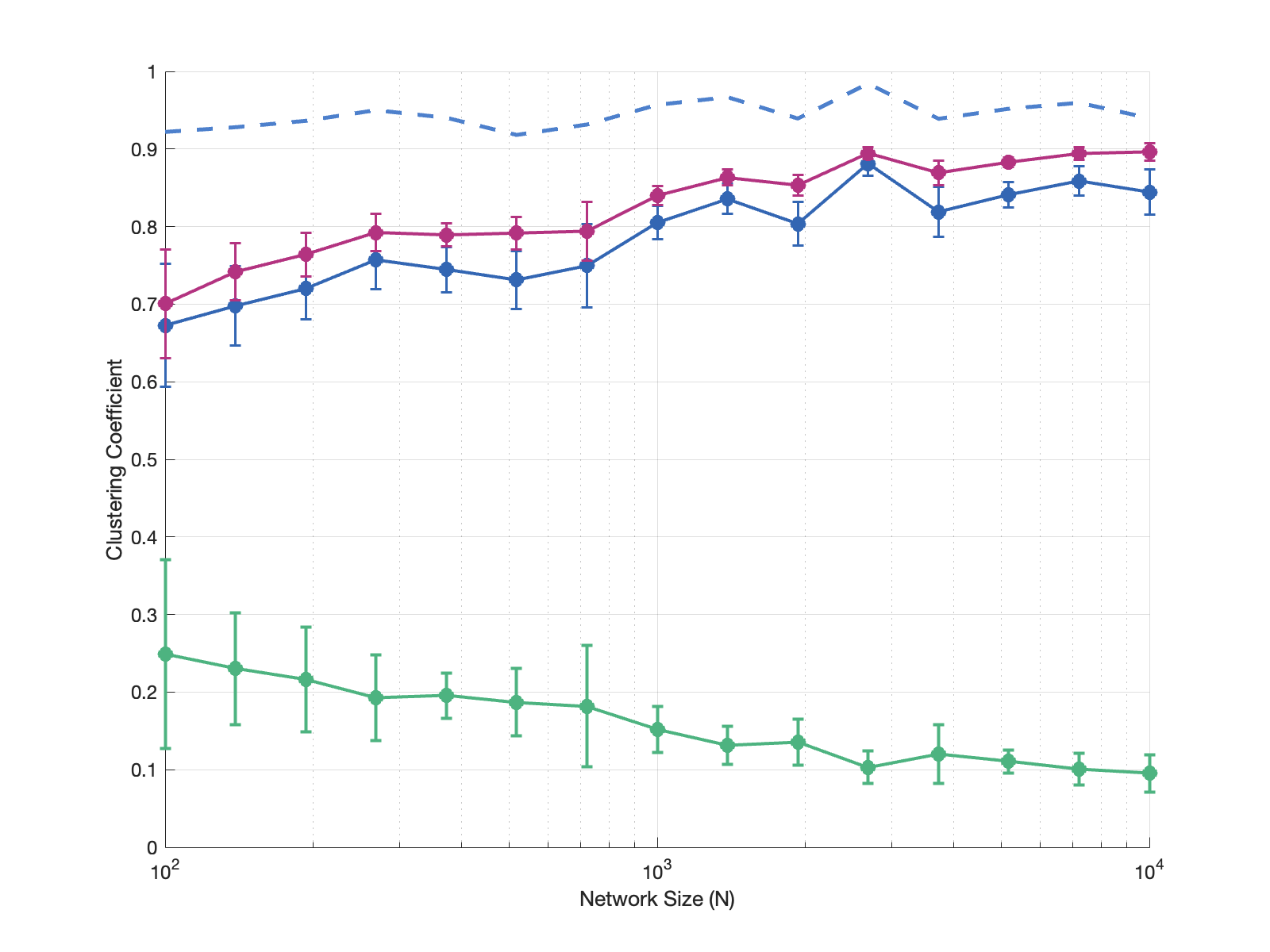}
    \includegraphics[width=0.49\linewidth, height= 0.32\linewidth]{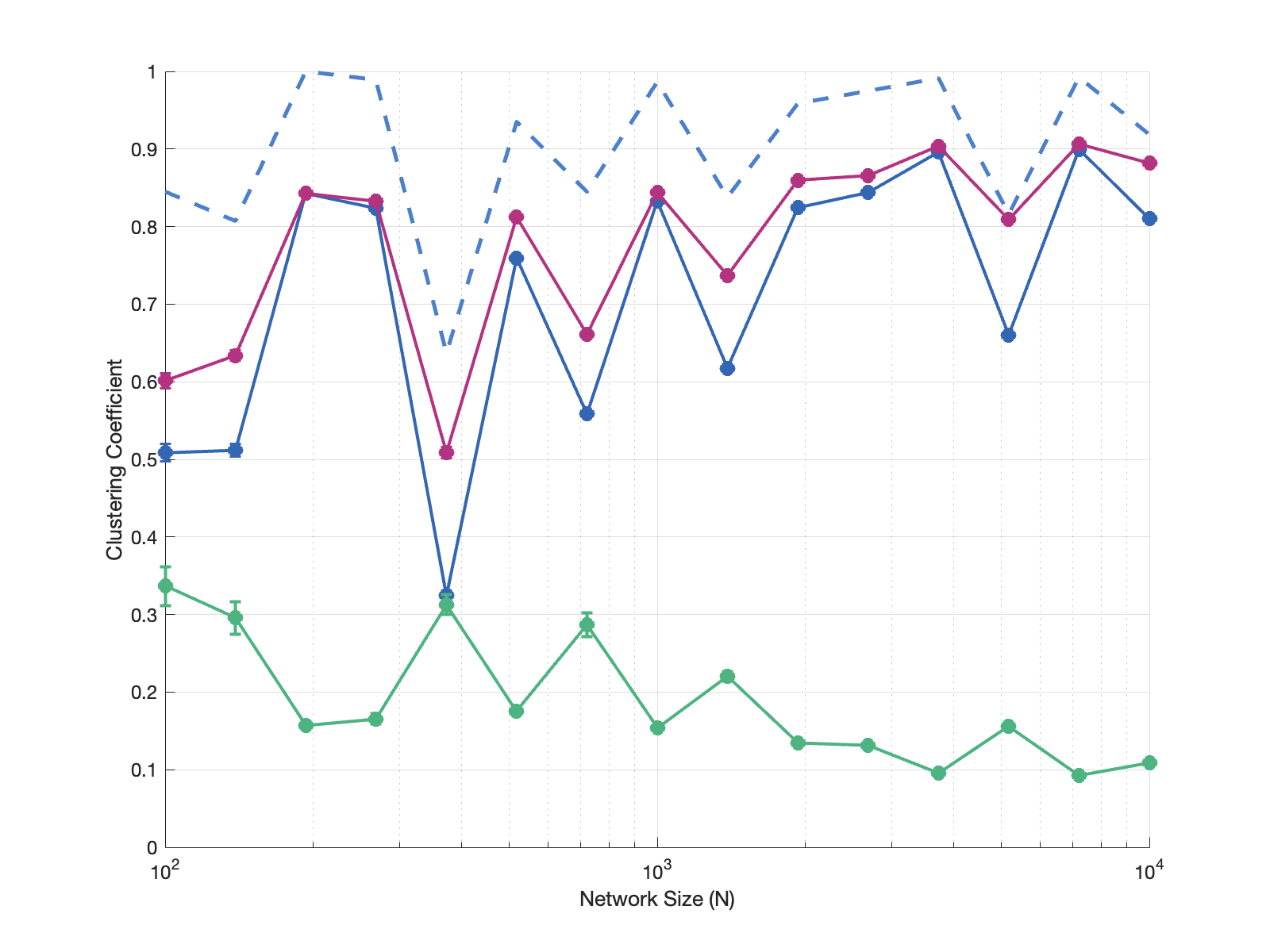}\\
    \includegraphics[width=0.49\linewidth, height= 0.32\linewidth]{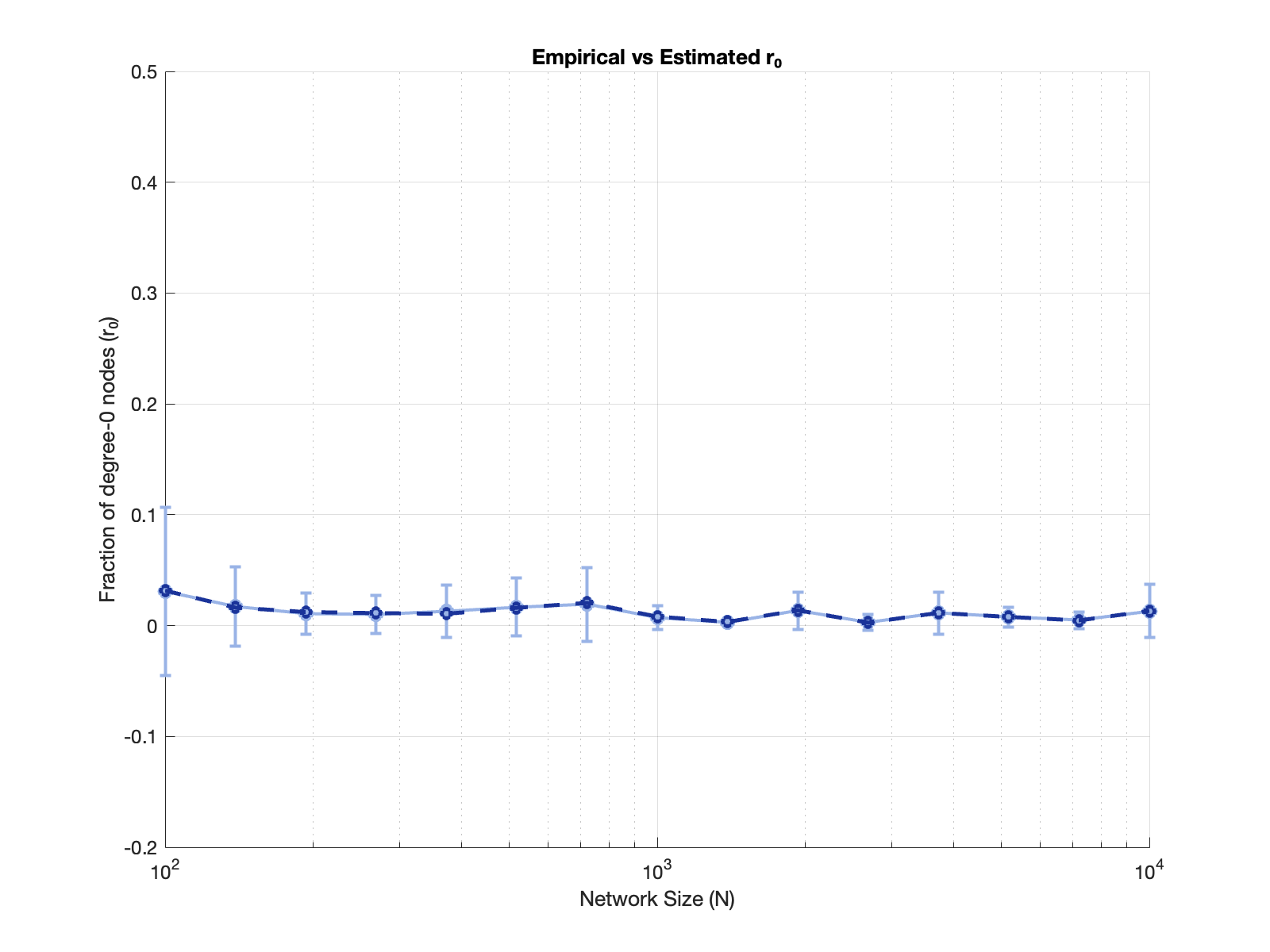}
    \includegraphics[width=0.49\linewidth, height= 0.32\linewidth]{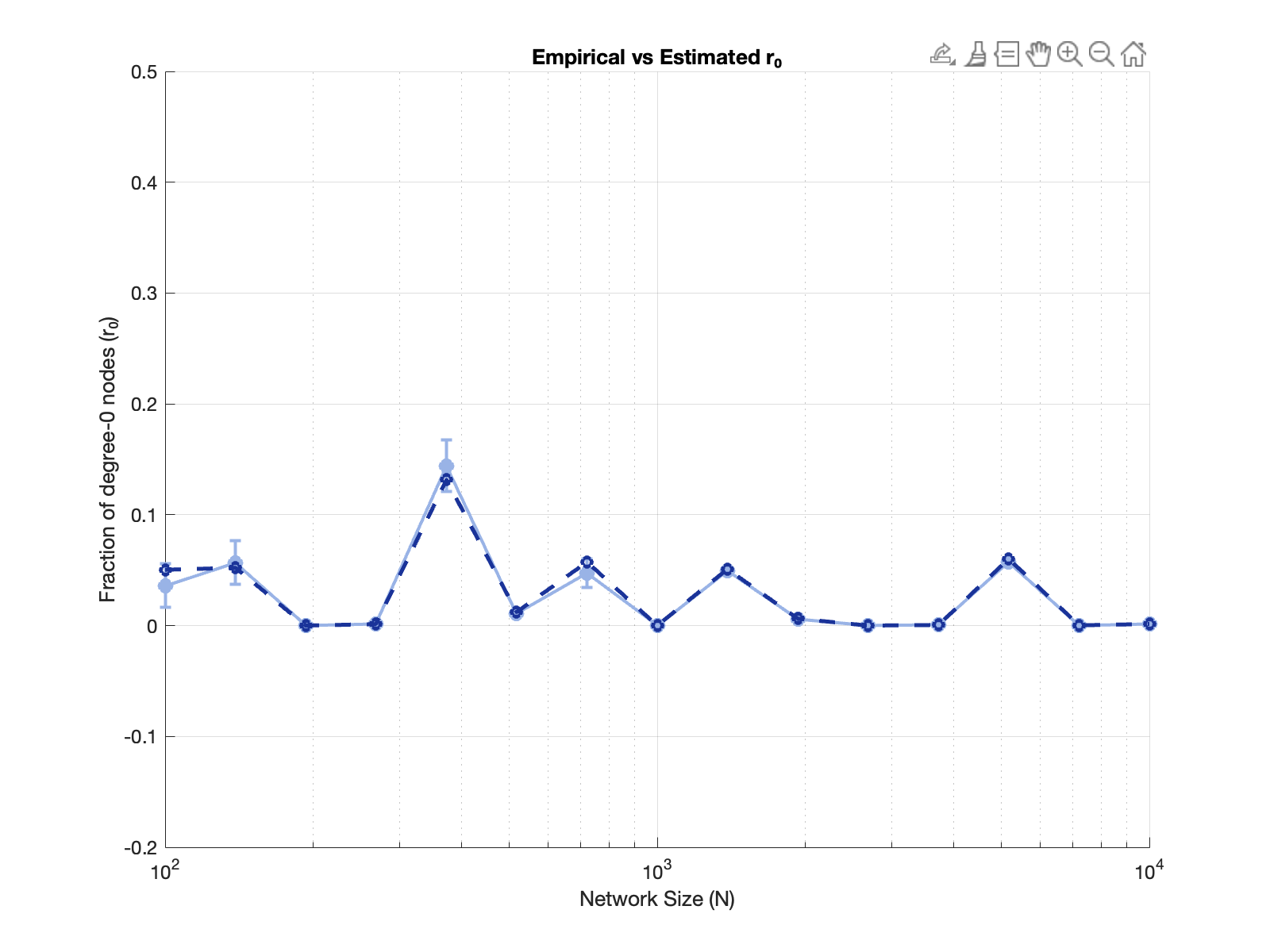}\\
    \includegraphics[width=0.49\linewidth, height= 0.32\linewidth]{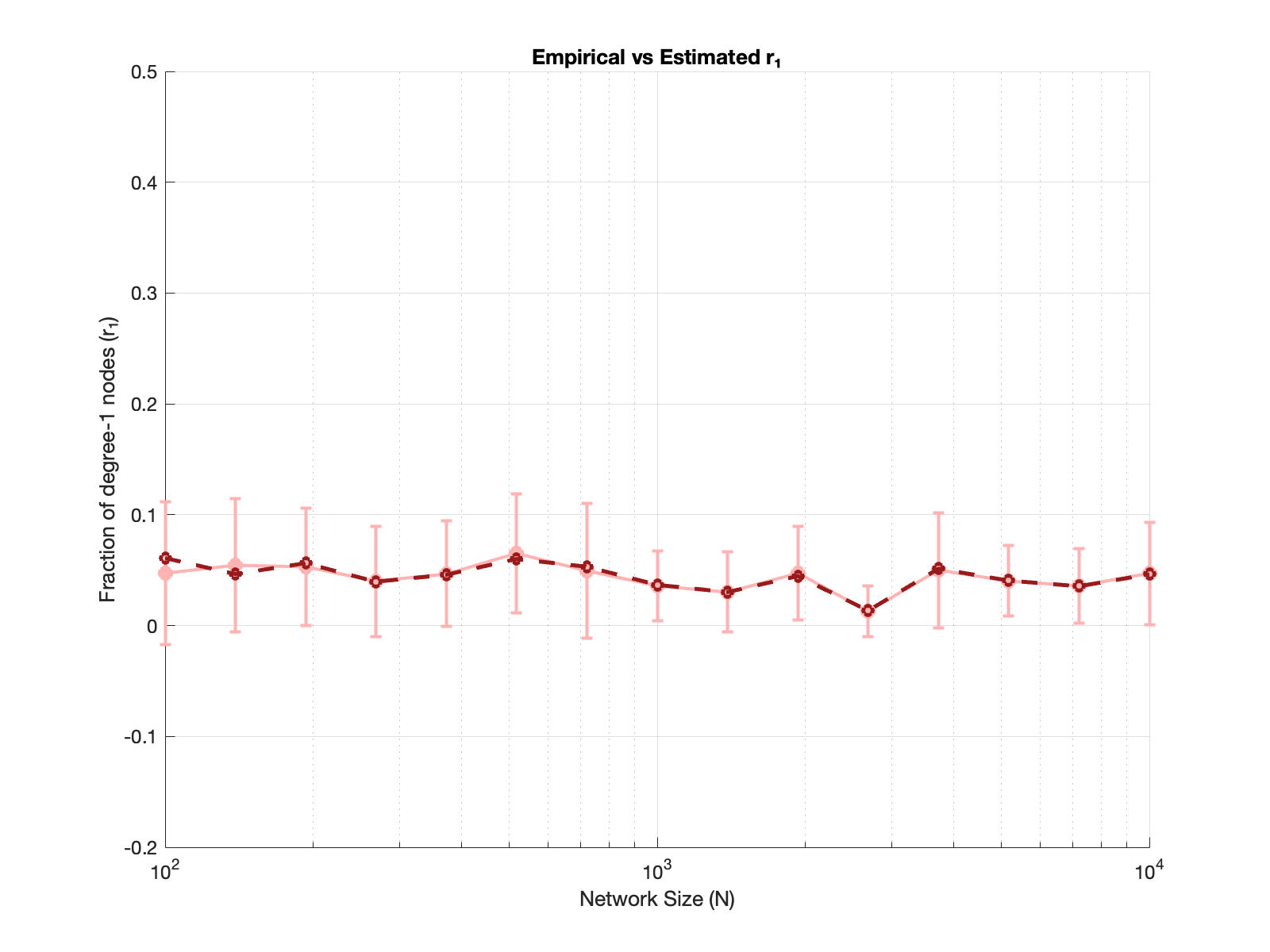}
    \includegraphics[width=0.49\linewidth, height= 0.32\linewidth]{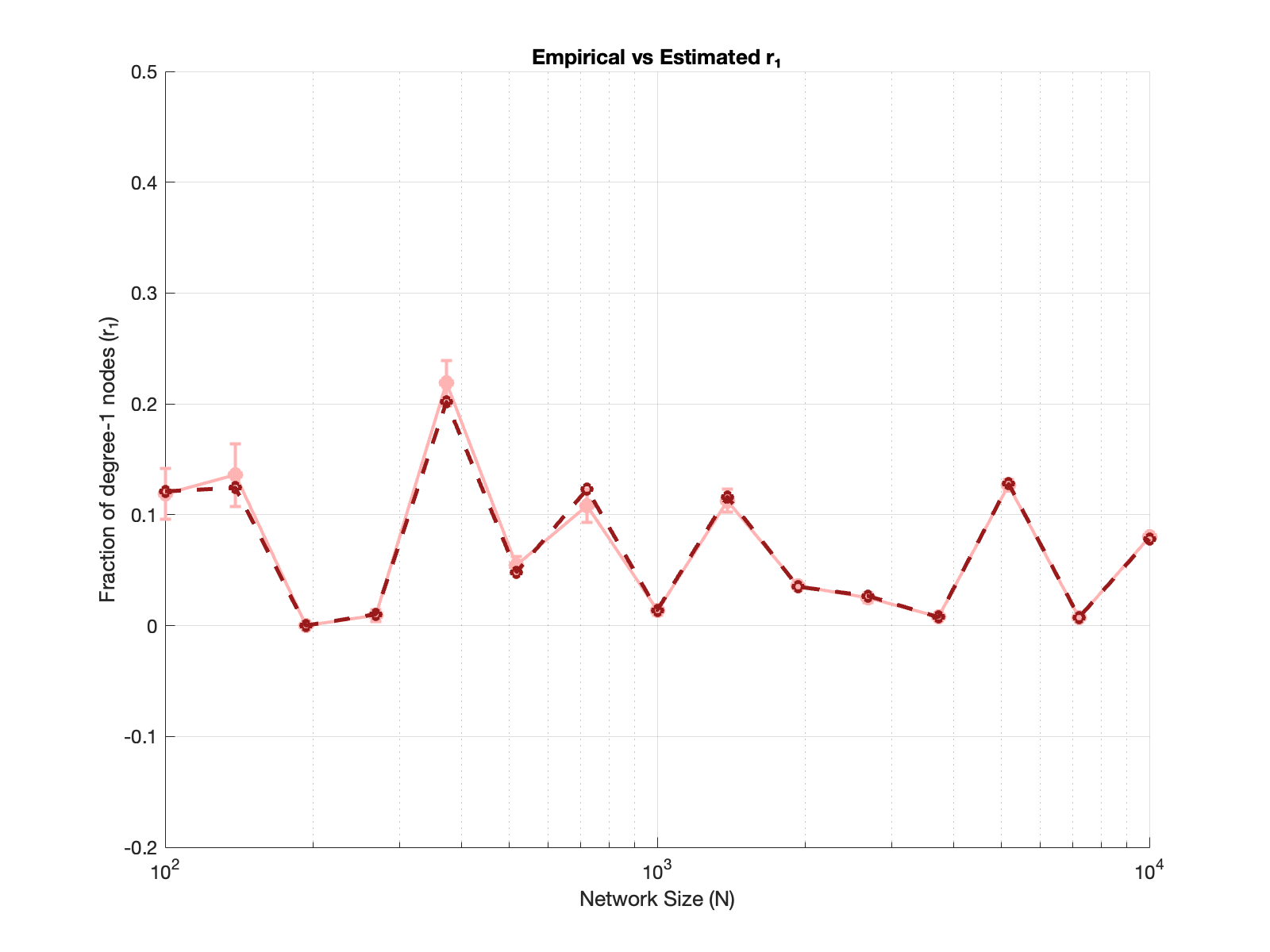}
    \caption{\textbf{Average local clustering coefficient $C$ and distance to $r_{0/1}$ versus network size $n$ (for $\alpha=0.7$).} 
    On the left, simulations are done by resampling weights every time an actual network is realized (sample of $10$), while on the left for each $n$ weights are extracted only once, and $10$ different adjacency matrices are realized from the same weights. On top
    we show the node-averaged local clustering coefficient ${C}$, both including (blue symbols) and excluding (purple symbols) nodes with degree $k=0,1$ (note that the latter evolves smoothly towards 1 with shrinking error bars as $n$ increases, while the former fluctuates with non-vanishing error bars, as a result of non-self-averaging).    
    The dashed blue line is $1$ minus the average of $r_{0/1}$ over realizations. Finally, in green the difference between $1-r_{0/1}$ and ${C}$ computed including nodes with $k<2$ (notice the shrinking error bars). In the middle, the light blue line with error bars is the actual value of $r_0$, while the dashed darker one is the approximation we derive in~\eqref{r0-approximation}. At the bottom, the light red line with error bars is the actual value of $r_1$, while the dashed darker one is the approximation we derive in~\eqref{r1-approximation}. }
    \label{fig:pan_07}
\end{figure}

\section{Extension to stable weights}
In this section we briefly discuss how our results can be extended to the case in which the weights are sampled from a genuine $\alpha$-stable distribution $\tilde{\rho}_\alpha(w)$ with positive support and diverging mean. Throughout, we follow the formalism introduced in~\cite{Nola20}, and in particular we adopt the first parametrization defined there, corresponding to $k=0$.

For stable laws, our arguments can be adapted by replacing the exact expression for the Pareto weight density,
$\rho_{\alpha}(w)=\alpha w^{-(\alpha+1)}$ for $w\geq 1$ in~\eqref{density-Pareto}, with an asymptotic relation. Specifically, Theorem~1.2 of~\cite{Nola20} shows that the density of an $\alpha$-stable law satisfies
\begin{equation}
\label{eq:density-stable}
    \tilde{\rho}_\alpha(w)=(1+o(1))\,\frac{\alpha \gamma^{\alpha}(1+\beta)c_\alpha}{w^{\alpha+1}}, 
    \qquad w\geq 1,
\end{equation}
where the tail parameter satisfies $\alpha\in(0,1)$, $c_\alpha=\frac{\Gamma(\alpha)}{\pi}\sin\!\left(\frac{\pi\alpha}{2}\right)$, $\gamma$ denotes the scale parameter, and $\beta$ is the skewness parameter of the stable distribution.

In order to work with a distribution supported on the positive real axis, we choose $\beta=1$ and set the infimum of the support to zero. This choice fixes the location parameter to be $\delta=\gamma\tan\!\left(\frac{\pi\alpha}{2}\right)$. The only remaining free parameter is therefore the scale $\gamma$, which we determine by requiring that the tails of the stable and Pareto distributions coincide asymptotically. Imposing this matching condition yields
\begin{equation}
    \gamma = \left[\frac{\pi}{2\,\Gamma(\alpha)\sin\!\left(\frac{\pi\alpha}{2}\right)}\right]^{1/\alpha}\label{eq:tails}.
\end{equation}

With this choice of parameters, all the results we derived for Pareto-distributed weights extend directly to the case of stable sampling. In particular, since the integrals appearing in Section~\ref{app:firstMoments} are dominated by contributions from large values of $w$, the asymptotic estimates derived in Section~\ref{app:asymptotics} remain valid. Similarly, the arguments used in Section~\ref{app:rzerouno} to compute the fraction of disconnected nodes and nodes of degree one carry over almost verbatim to the stable case. The only modification is the appearance of a constant $c$ in~\eqref{conv-sum-Pareto-order-statistics}, which is now directly related to the scale parameter of the underlying stable law.

Indeed, in the convergence statement~\eqref{conv-sum-Pareto-order-statistics}, the partial sum $S_n$ has a strictly $\alpha$-stable distribution for every $n$. Interpreting $S_n$ as the value at time $1$ of a stable Lévy process, the sum $c\sum_{j\geq 1}\Gamma_j^{-1/\alpha}$ can be viewed as the collection of jump sizes of the process, ordered by decreasing magnitude. Correspondingly, the rescaled weights $n^{-1/\alpha}W_i$ represent the increments of the process over the time intervals $[(i-1)/n,i/n]$, and the variables $(y_k^{\sss(n)})_{k=1}^n$ in~\eqref{ordered-resceled-weights} can be interpreted as the ordered increments over these intervals. Since, with high probability and for large $n$, each interval contains at most one large jump, it follows that~\eqref{conv-sum-Pareto-order-statistics} continues to hold in this setting, with the first coordinate (corresponding to the sum) having exactly the same distribution as in the Pareto case.

Finally, in Figures~\ref{fig:stable_cc},~\ref{fig:pan_03_stable},~\ref{fig:pan_05_stable}, and~\ref{fig:pan_07_stable}, we provide numerical evidence confirming that the results presented in the main text remain valid when the weights are drawn from an $\alpha$-stable distribution, in agreement with the arguments given above.

\begin{figure}[h]
    \centering
    \includegraphics[width=0.32\linewidth, height= 0.2\linewidth]{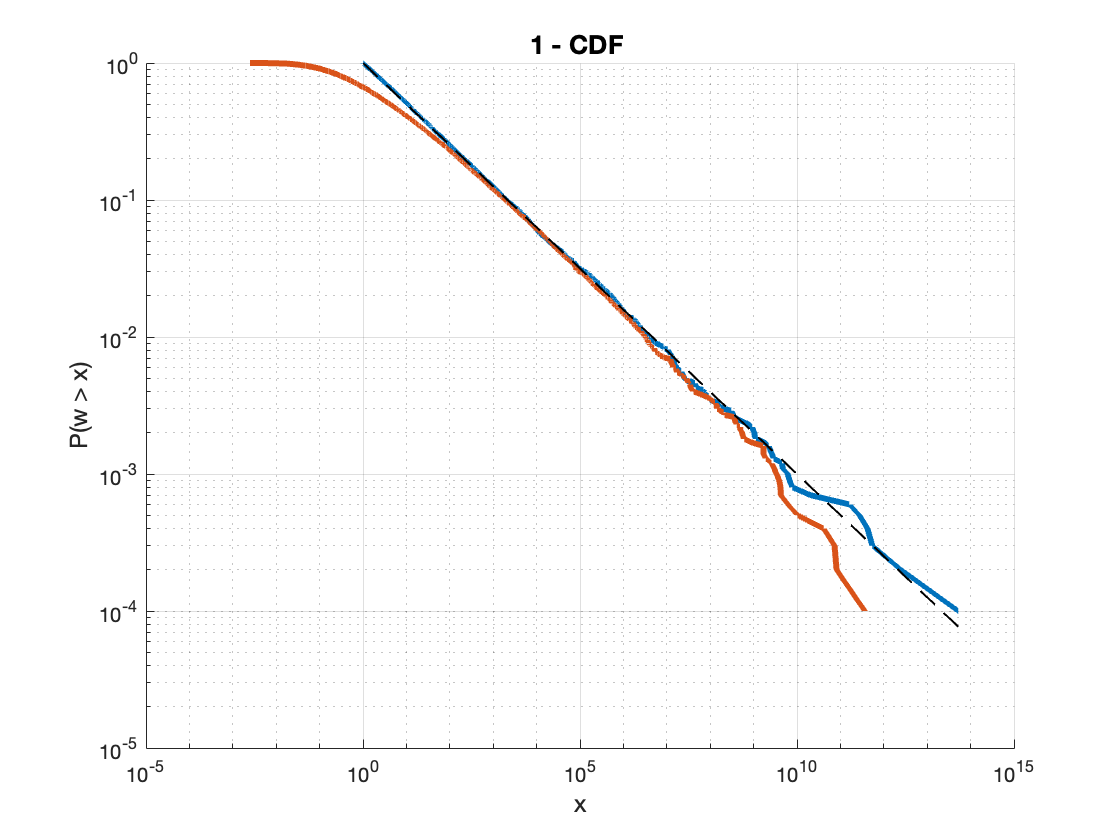}
    \includegraphics[width=0.32\linewidth, height= 0.2\linewidth]{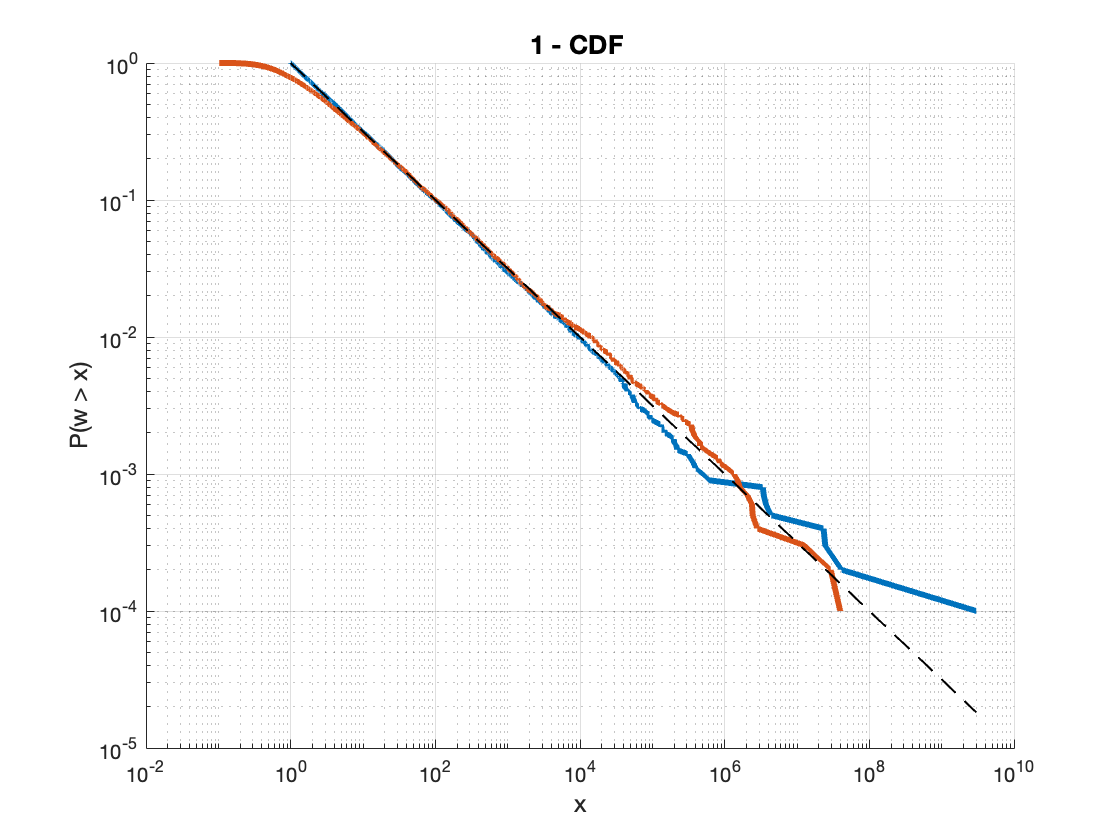}
    \includegraphics[width=0.32\linewidth, height= 0.2\linewidth]{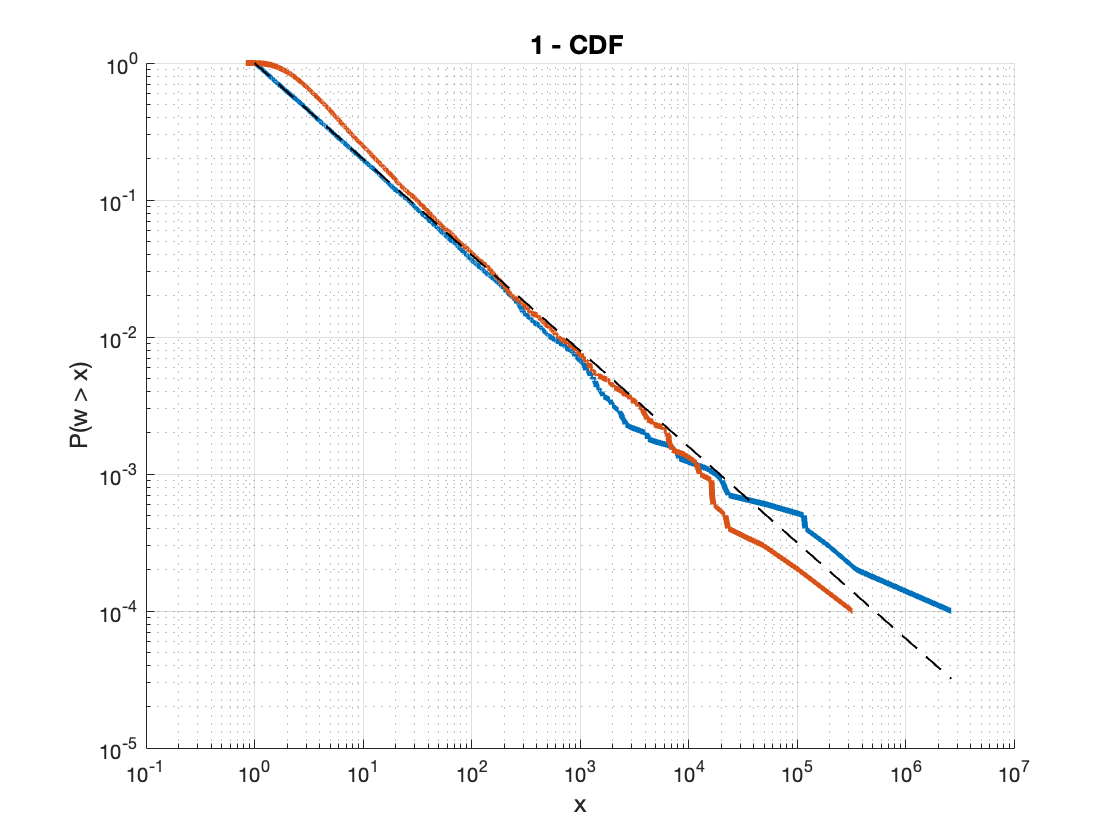}
    \caption{\textbf{Comparison between pure Pareto and Stable distributions in the tail behaviour}. Using the parameter of~\eqref{eq:tails}, we can match the tails of the two distributions. From left to right $\alpha =0.3,0.5,0.7$, sample of $10^4$.}
    \label{fig:placeholder}
\end{figure}

\begin{figure}[h]
    \includegraphics[width=0.32\linewidth, height= 0.2\linewidth]{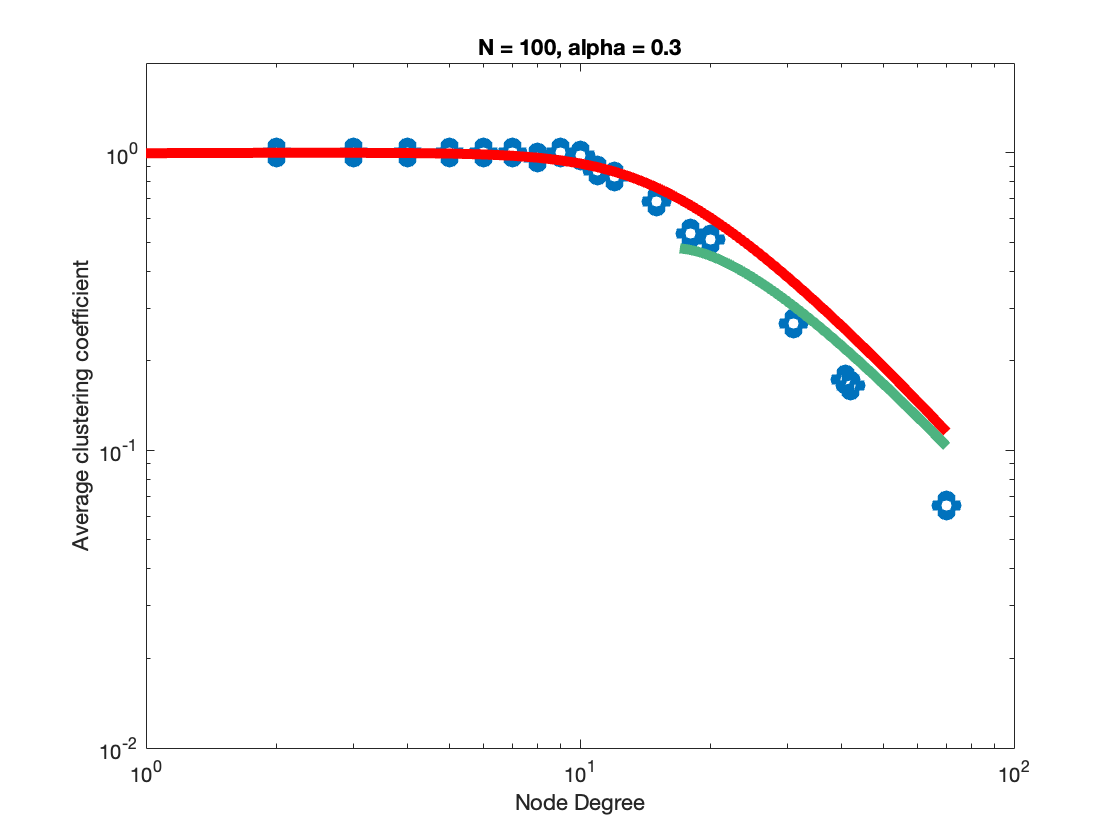} 
    \includegraphics[width=0.32\linewidth, height= 0.2\linewidth]{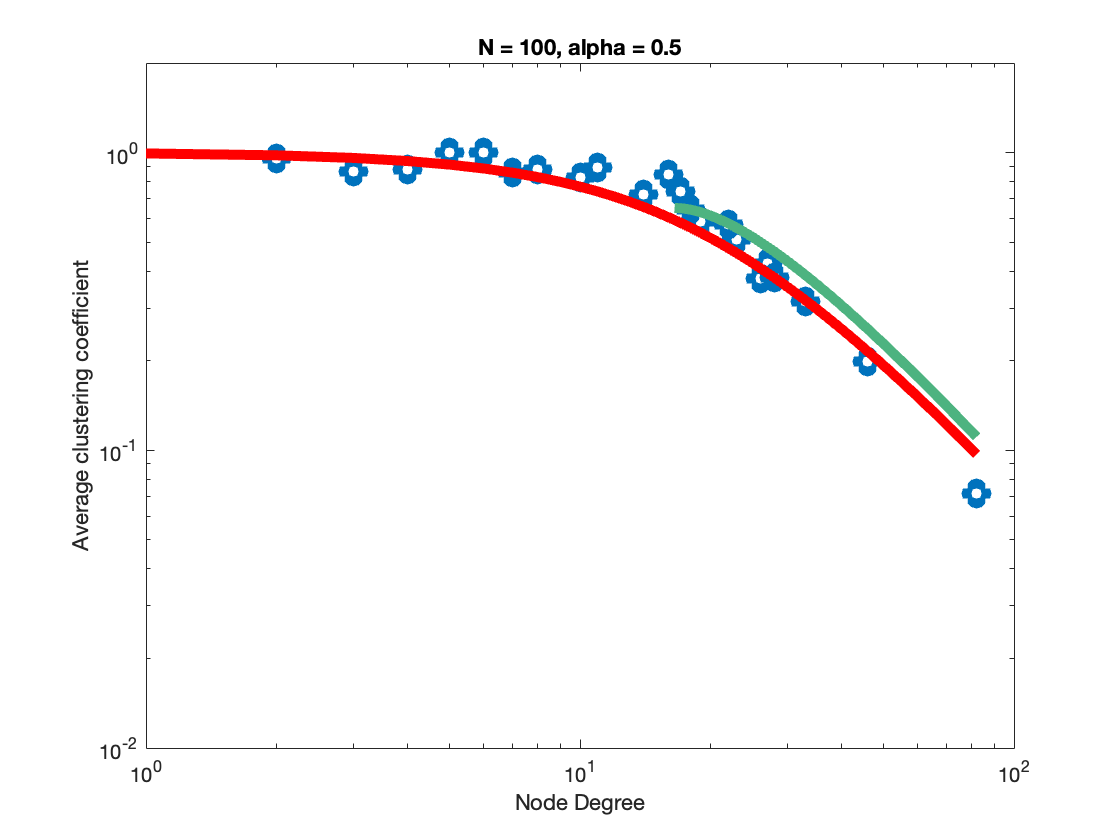}
    \includegraphics[width=0.32\linewidth, height= 0.2\linewidth]{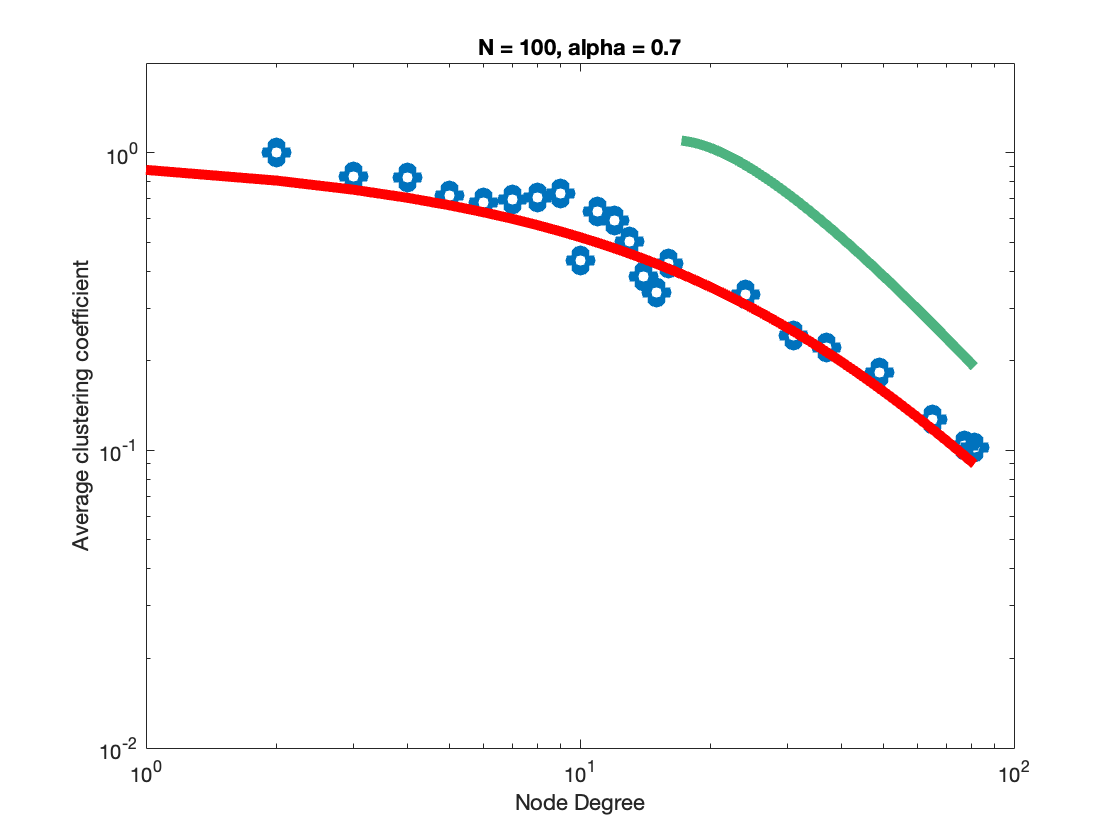}\\
    \includegraphics[width=0.32\linewidth, height= 0.2\linewidth]{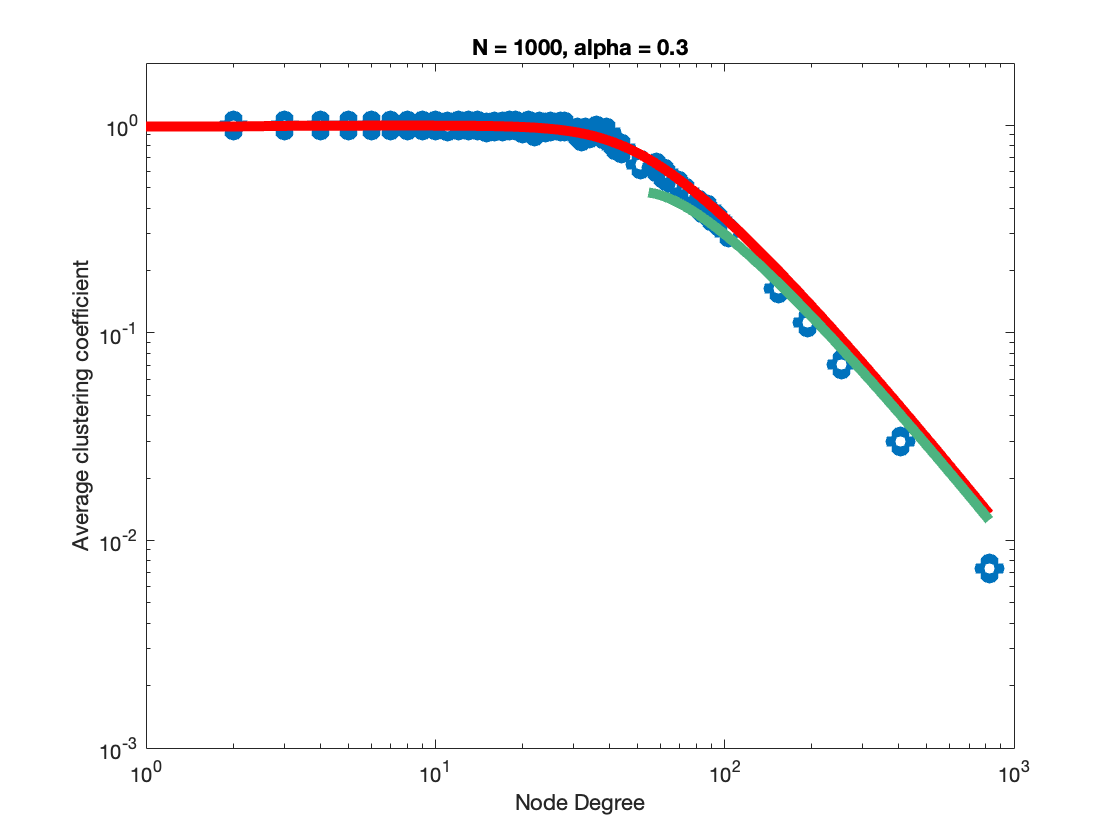} 
    \includegraphics[width=0.32\linewidth, height= 0.2\linewidth]{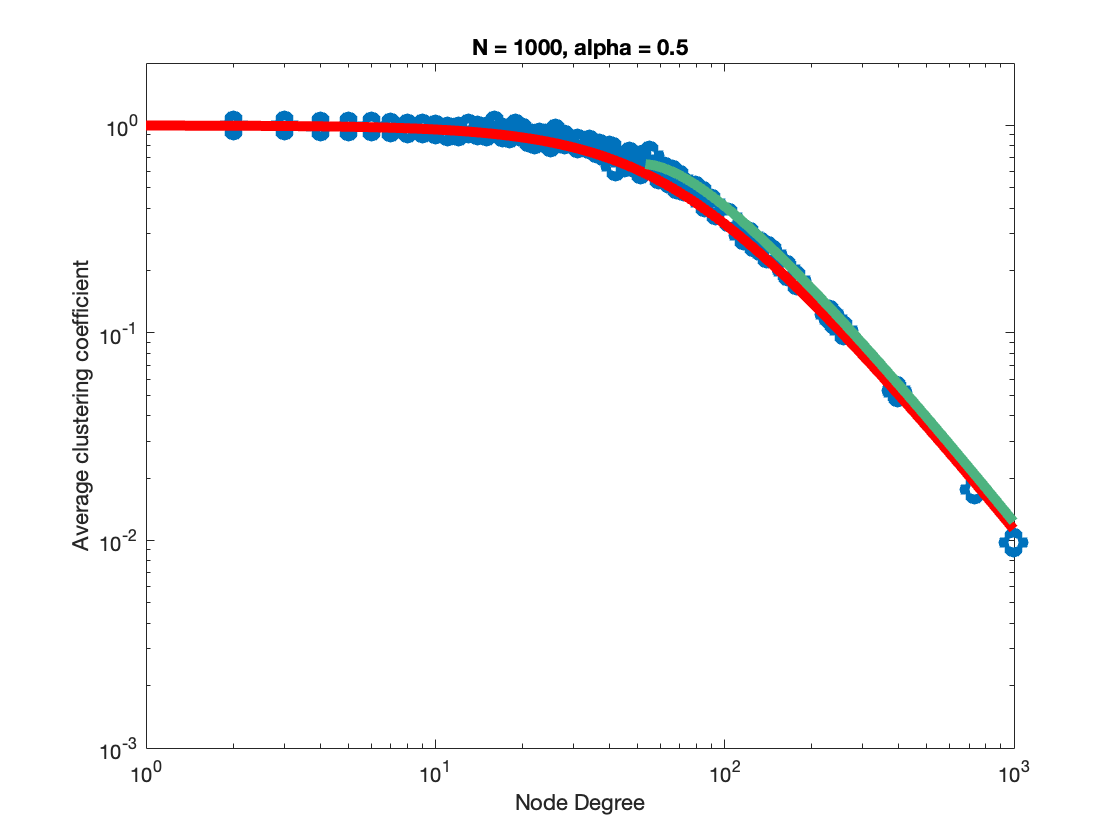}
    \includegraphics[width=0.32\linewidth, height= 0.2\linewidth]{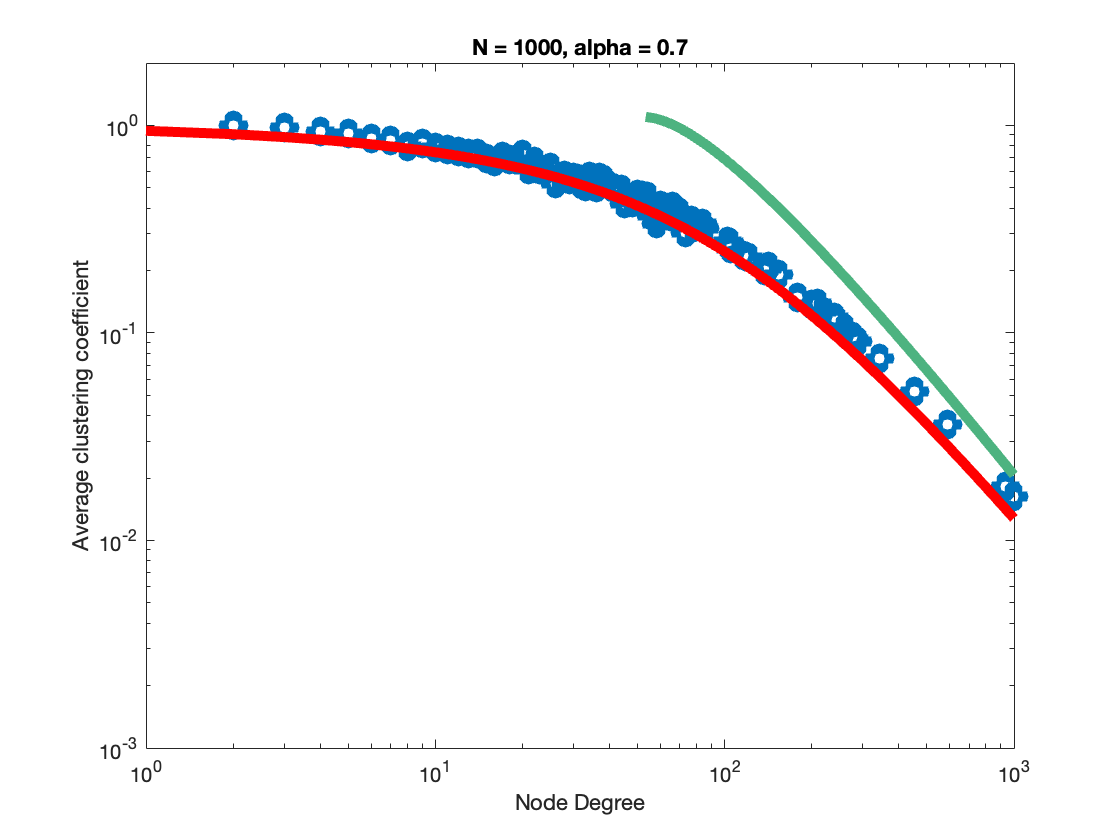}\\
    \includegraphics[width=0.32\linewidth, height= 0.2\linewidth]{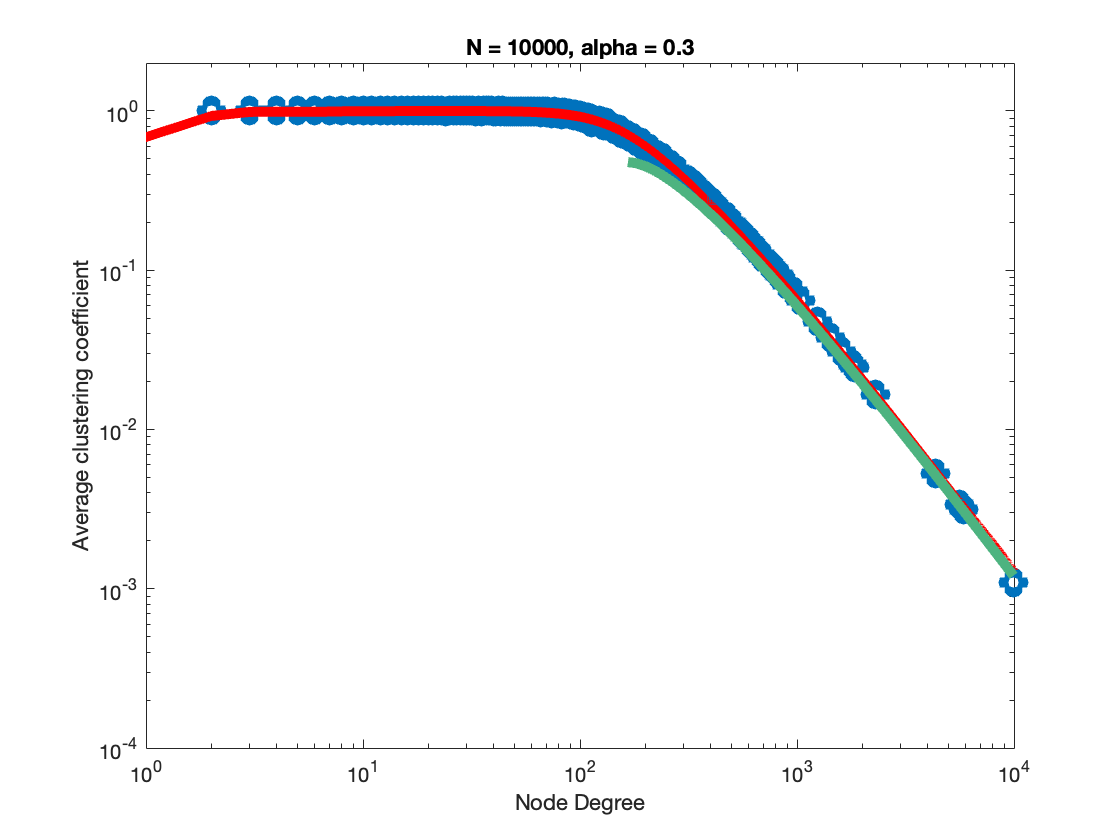} \includegraphics[width=0.32\linewidth, height= 0.2\linewidth]{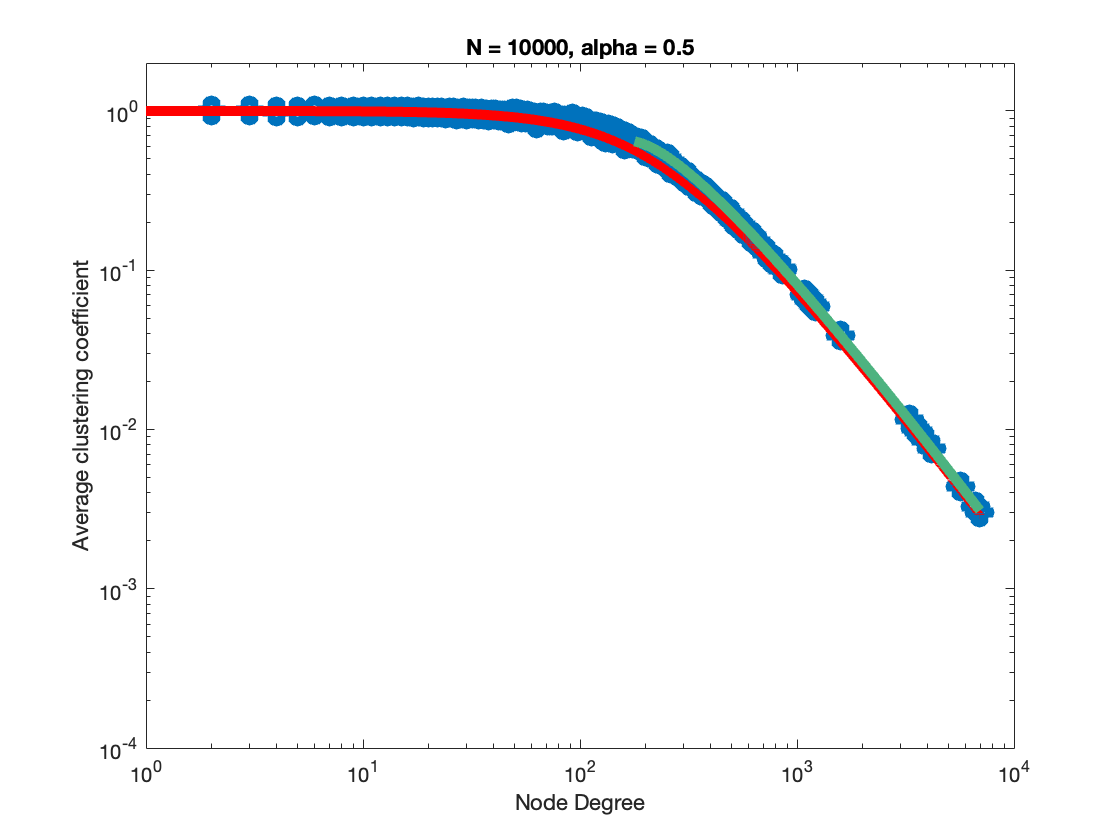}
    \includegraphics[width=0.32\linewidth, height= 0.2\linewidth]{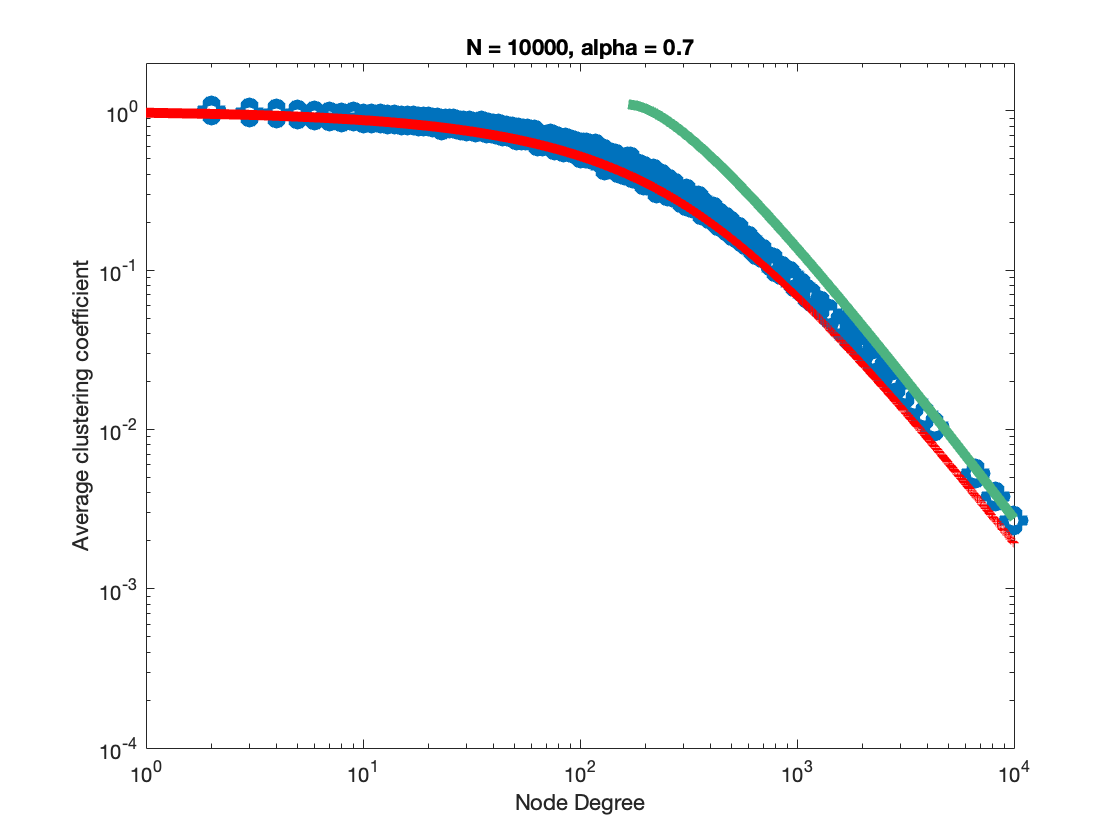}
    \caption{{\bf Clustering functions for different values of $n$ and $\alpha$ for the MSM model~\eqref{eq:pizza} with Stable weights~\eqref{eq:density-stable}.} Blue circles: empirical clustering function~\eqref{eq:cc_empirical} (versus the reduced degree $a=k/\sqrt{n}$) computed on actual realized graphs sampled from the model (obtained by sampling the weights once, and sampling the graph once conditionally on the realized weights). Red curves: our analytical expression~\eqref{eq:analytical_clustering} for the annealed clustering function. 
    Green curves: our asymptotic calculation~\eqref{eq:CC_hub} valid for diverging reduced degrees. We evaluate and plot all functions only for degrees larger than $1$, to avoid ambiguities in the definition of the clustering coefficient for $k<2$.
    From top to bottom: $n=10^2,10^3,10^4$. 
    From left to right: $\alpha=0.3,0.5,0.7$.}   \label{fig:stable_cc}
\end{figure}

\begin{figure}[h!]
    \centering
    \includegraphics[width=0.49\linewidth, height= 0.32\linewidth]{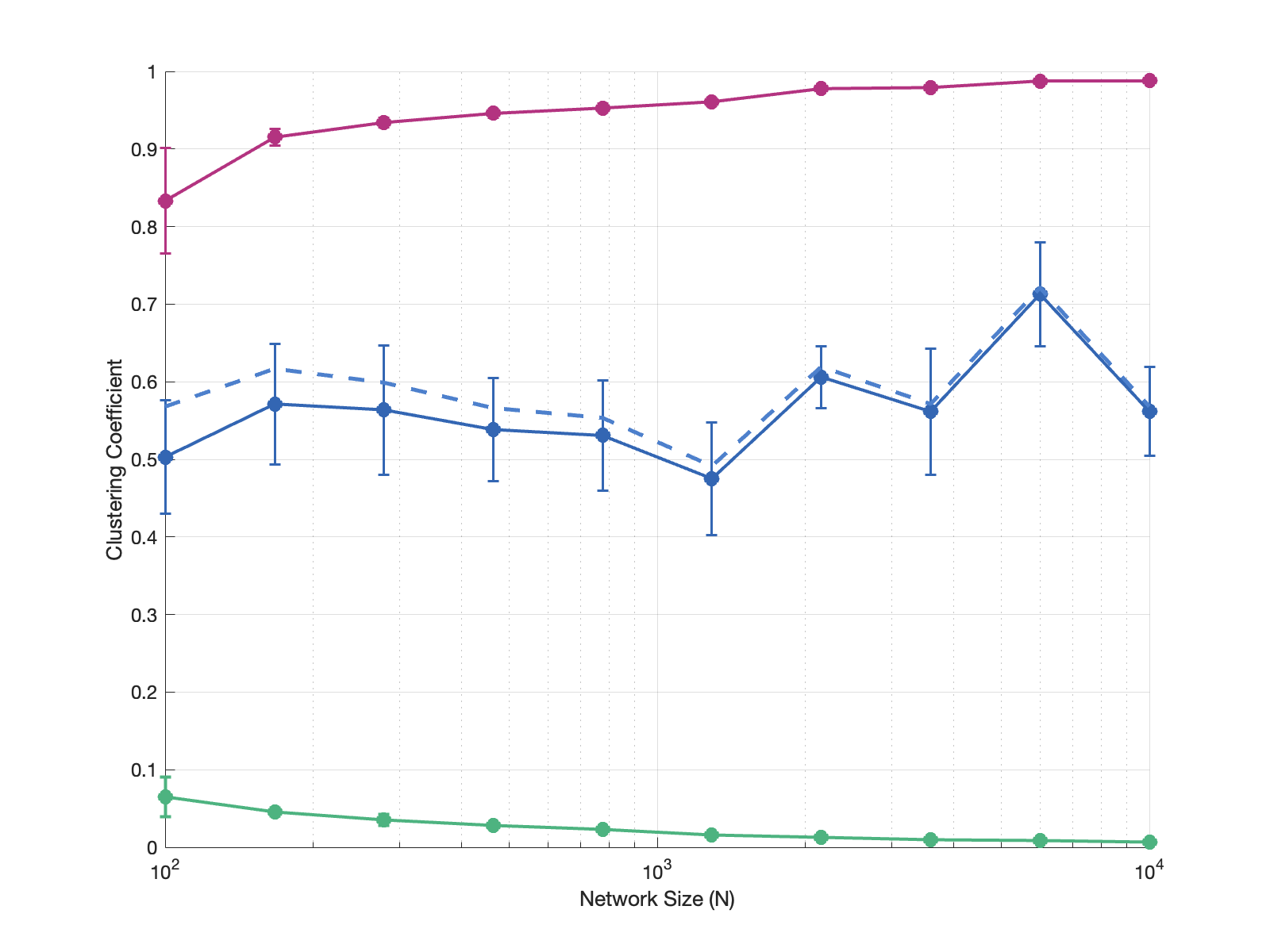}
    \includegraphics[width=0.49\linewidth, height= 0.32\linewidth]{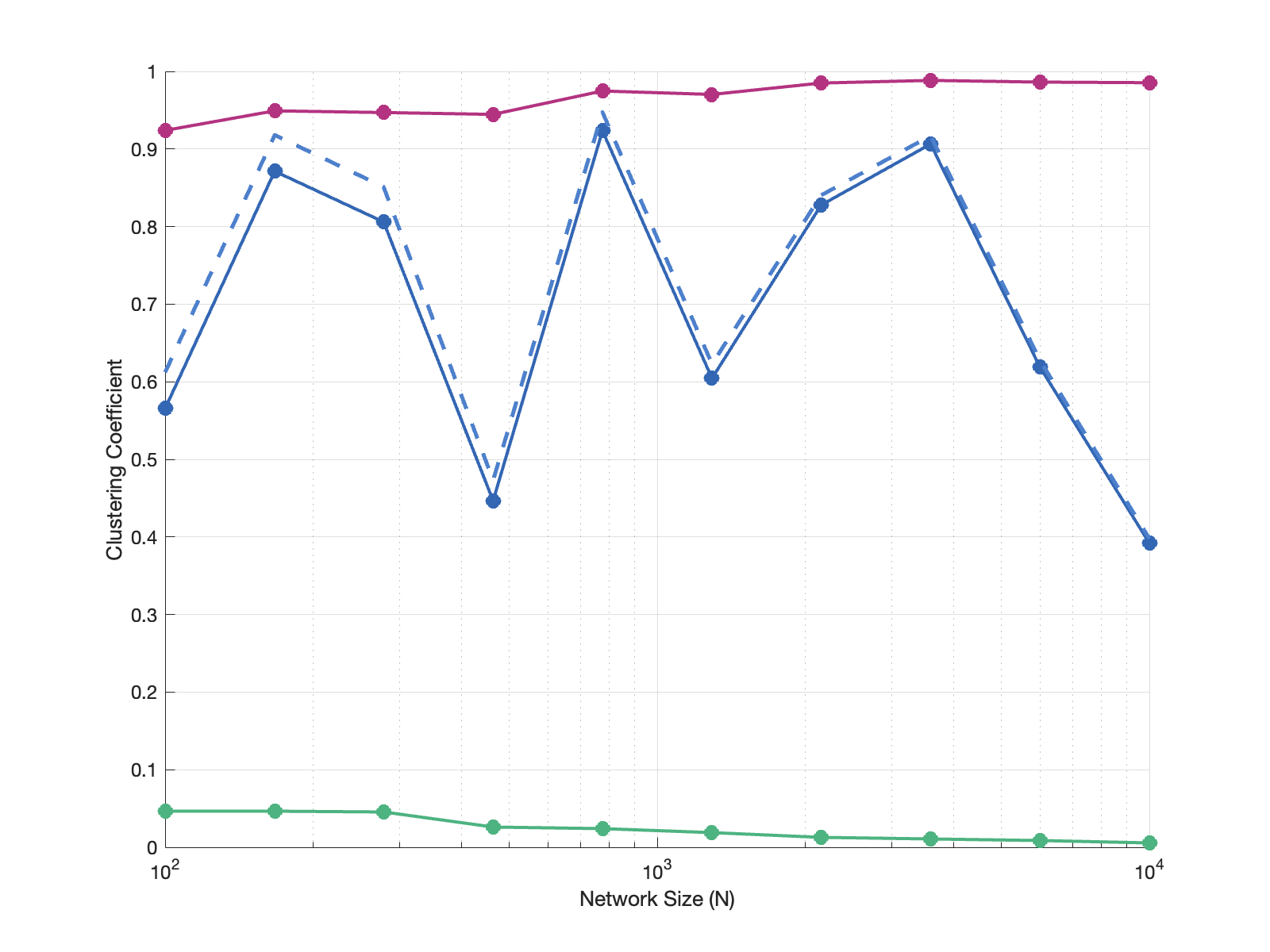}\\
    \includegraphics[width=0.49\linewidth, height= 0.32\linewidth]{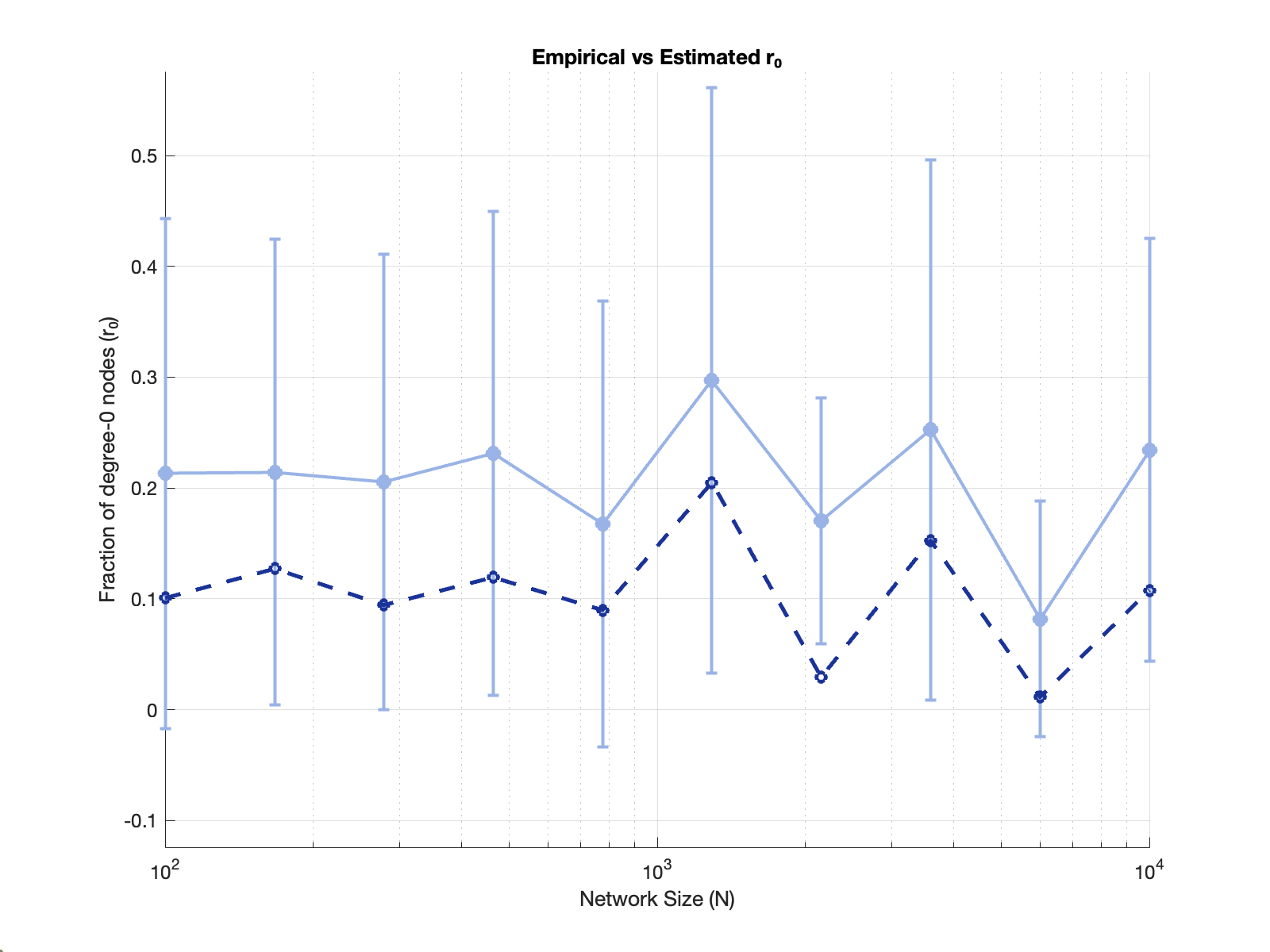}
    \includegraphics[width=0.49\linewidth, height= 0.32\linewidth]{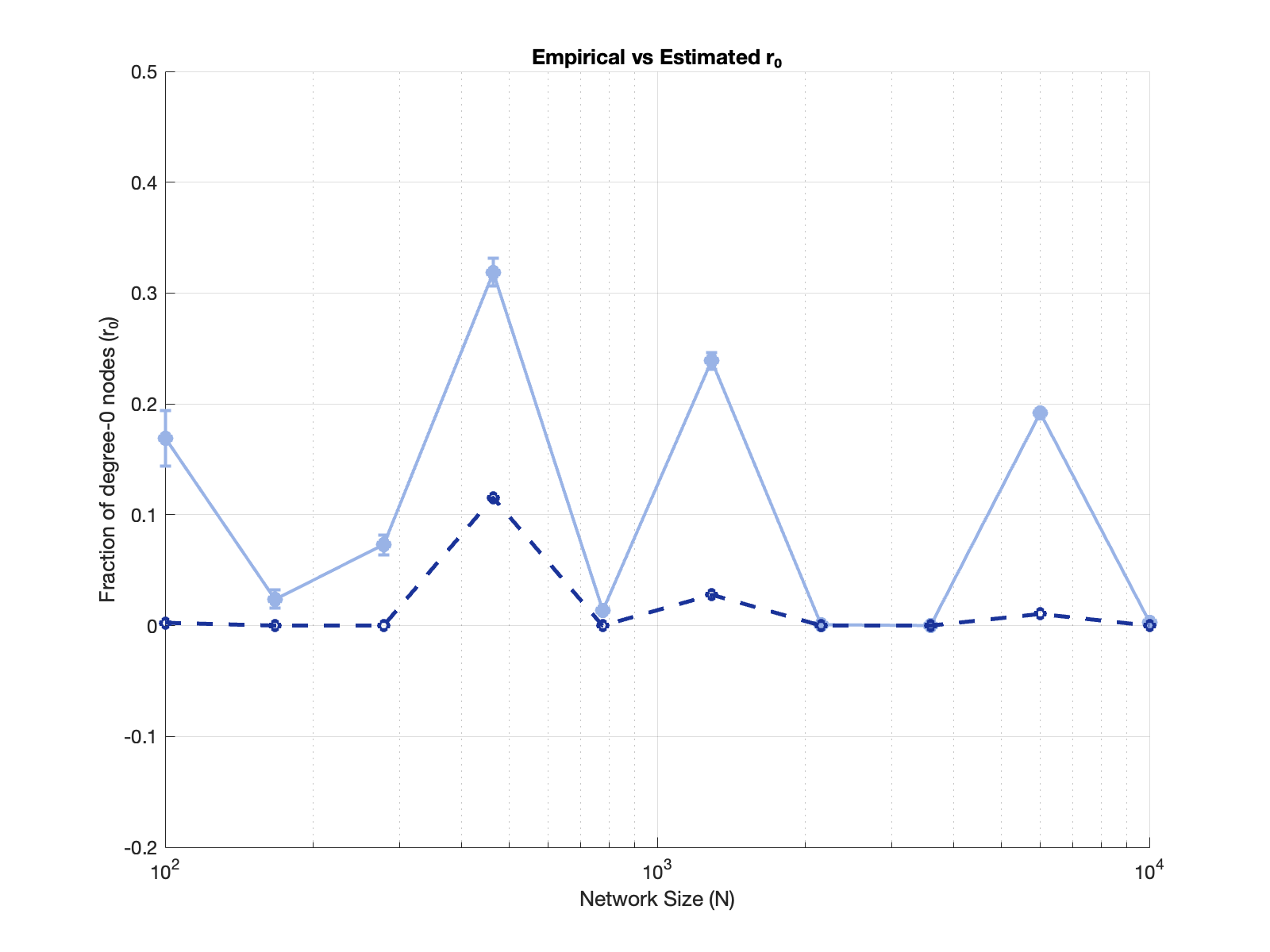}\\
    \includegraphics[width=0.49\linewidth, height= 0.32\linewidth]{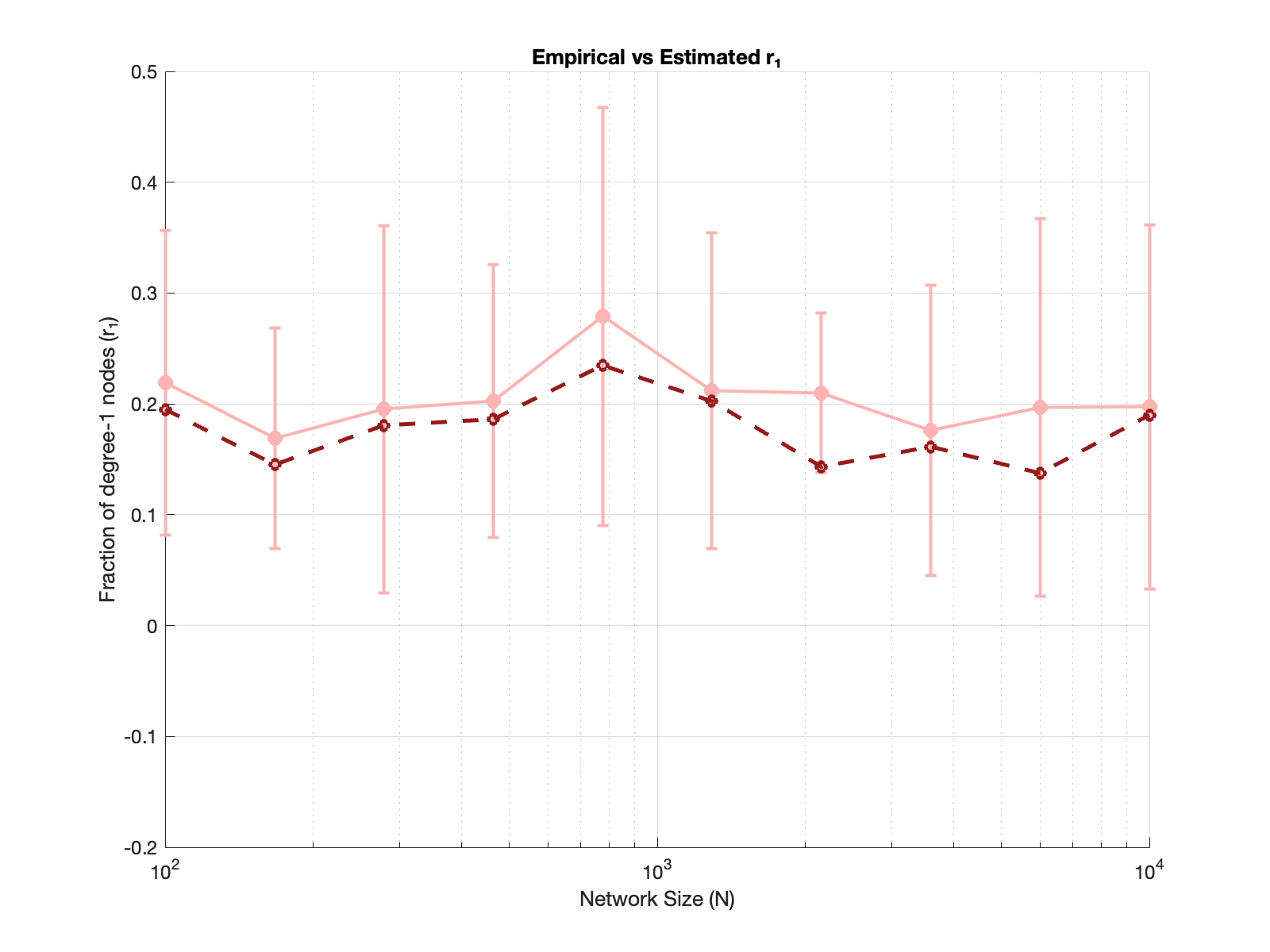}
    \includegraphics[width=0.49\linewidth, height= 0.32\linewidth]{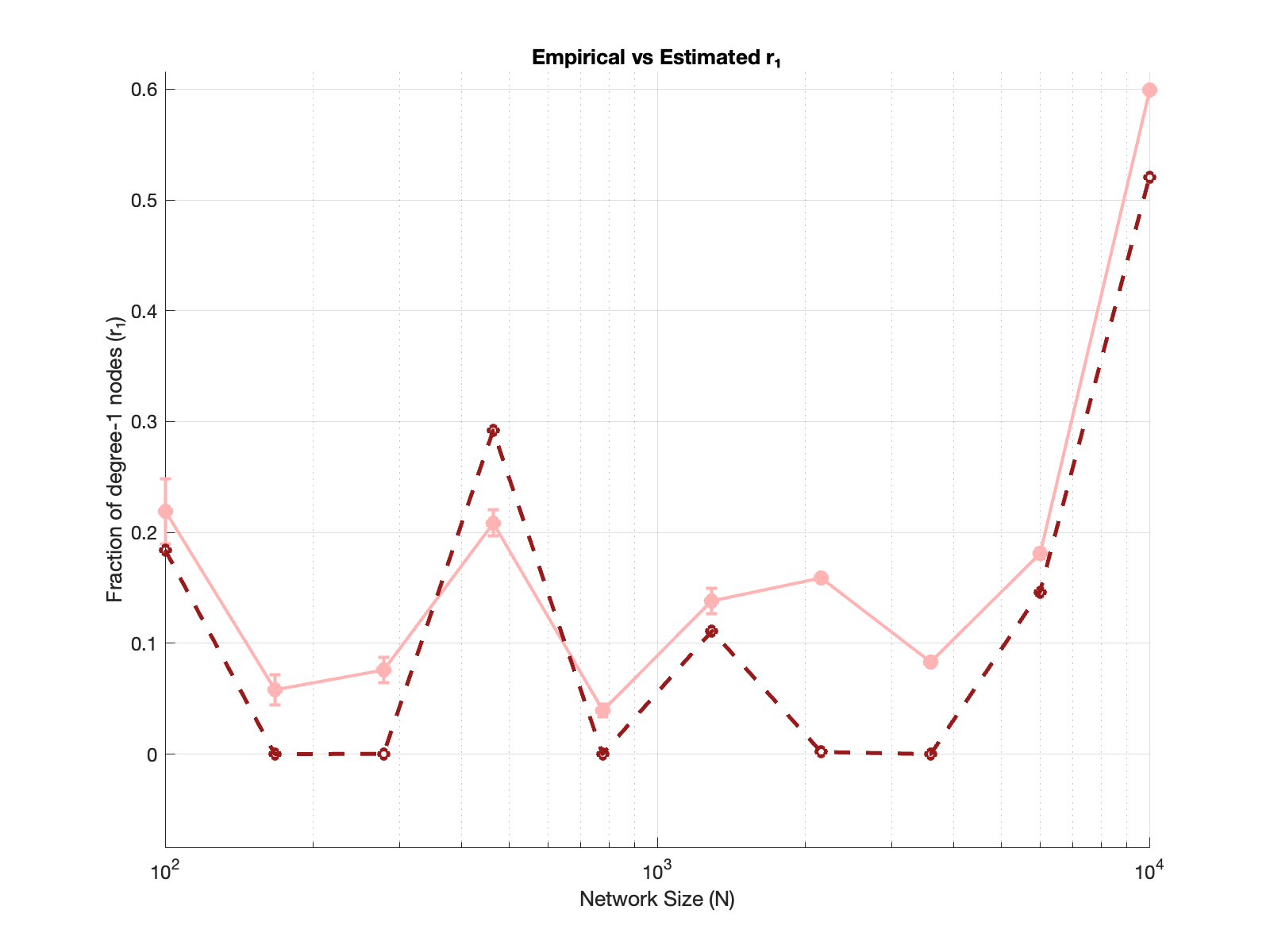}
    \caption{\textbf{Average local clustering coefficient $C$ and distance to $r_{0/1}$ versus network size $n$, with weights drawn from a Stable distribution (for $\alpha=0.3$).} 
    On the left, simulations are done by resampling weights every time an actual network is realized (sample of $10$), while on the left for each $n$ weights are extracted only once, and $10$ different adjacency matrices are realized from the same weights. On top
    we show the node-averaged local clustering coefficient ${C}$, both including (blue symbols) and excluding (purple symbols) nodes with degree $k=0,1$ (note that the latter evolves smoothly towards 1 with shrinking error bars as $n$ increases, while the former fluctuates with non-vanishing error bars, as a result of non-self-averaging).    
    The dashed blue line is $1$ minus the average of $r_{0/1}$ over realizations. Finally, in green the difference between $1-r_{0/1}$ and ${C}$ computed including nodes with $k<2$ (notice the shrinking error bars). In the middle, the light blue line with error bars is the actual value of $r_0$, while the dashed darker one is the approximation we derive in~\eqref{r0-approximation}. At the bottom, the light red line with error bars is the actual value of $r_1$, while the dashed darker one is the approximation we derive in~\eqref{r1-approximation}. }
    \label{fig:pan_03_stable}
\end{figure}

\begin{figure}[h!]
    \centering
    \includegraphics[width=0.49\linewidth, height= 0.32\linewidth]{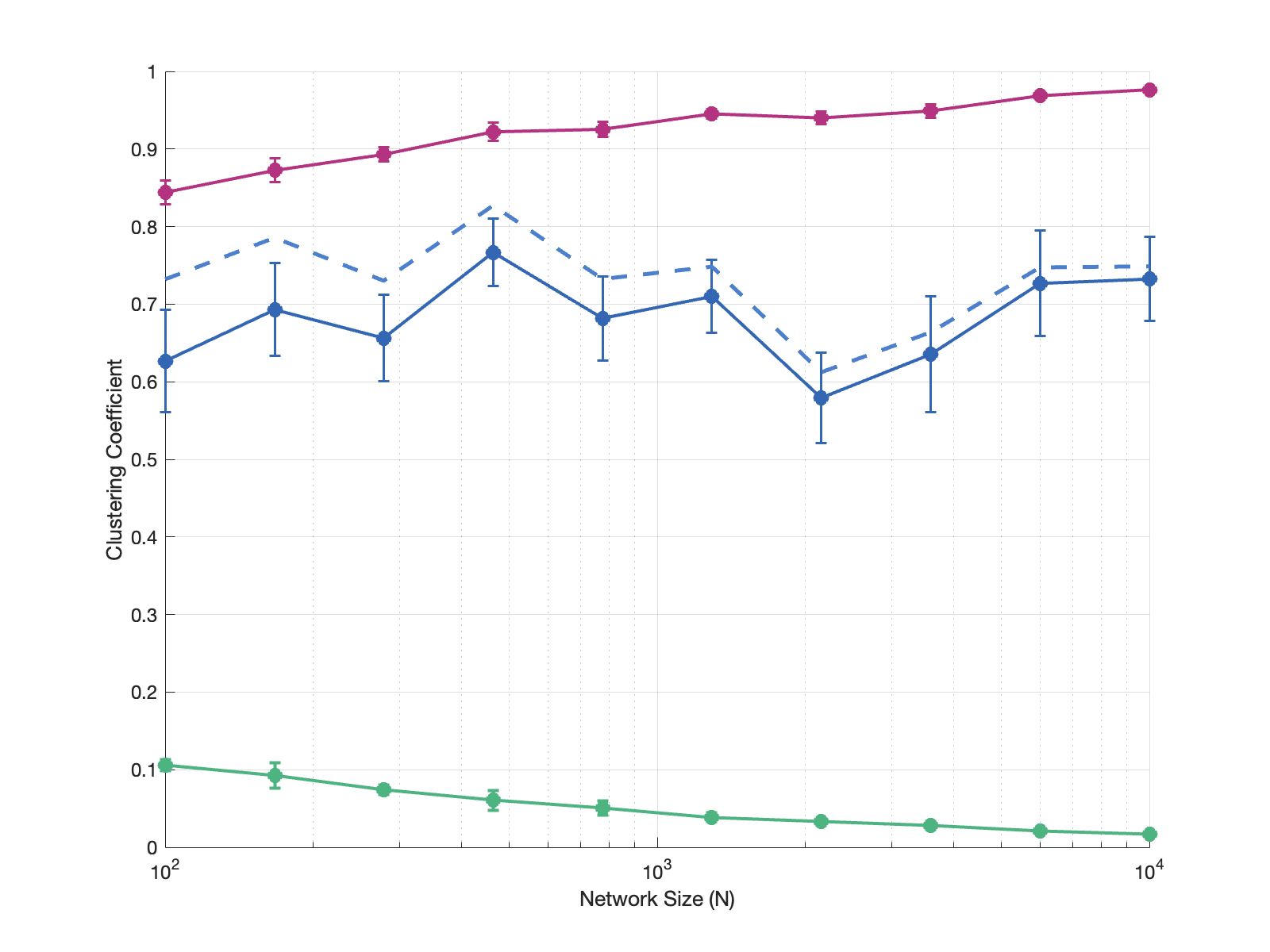}
    \includegraphics[width=0.49\linewidth, height= 0.32\linewidth]{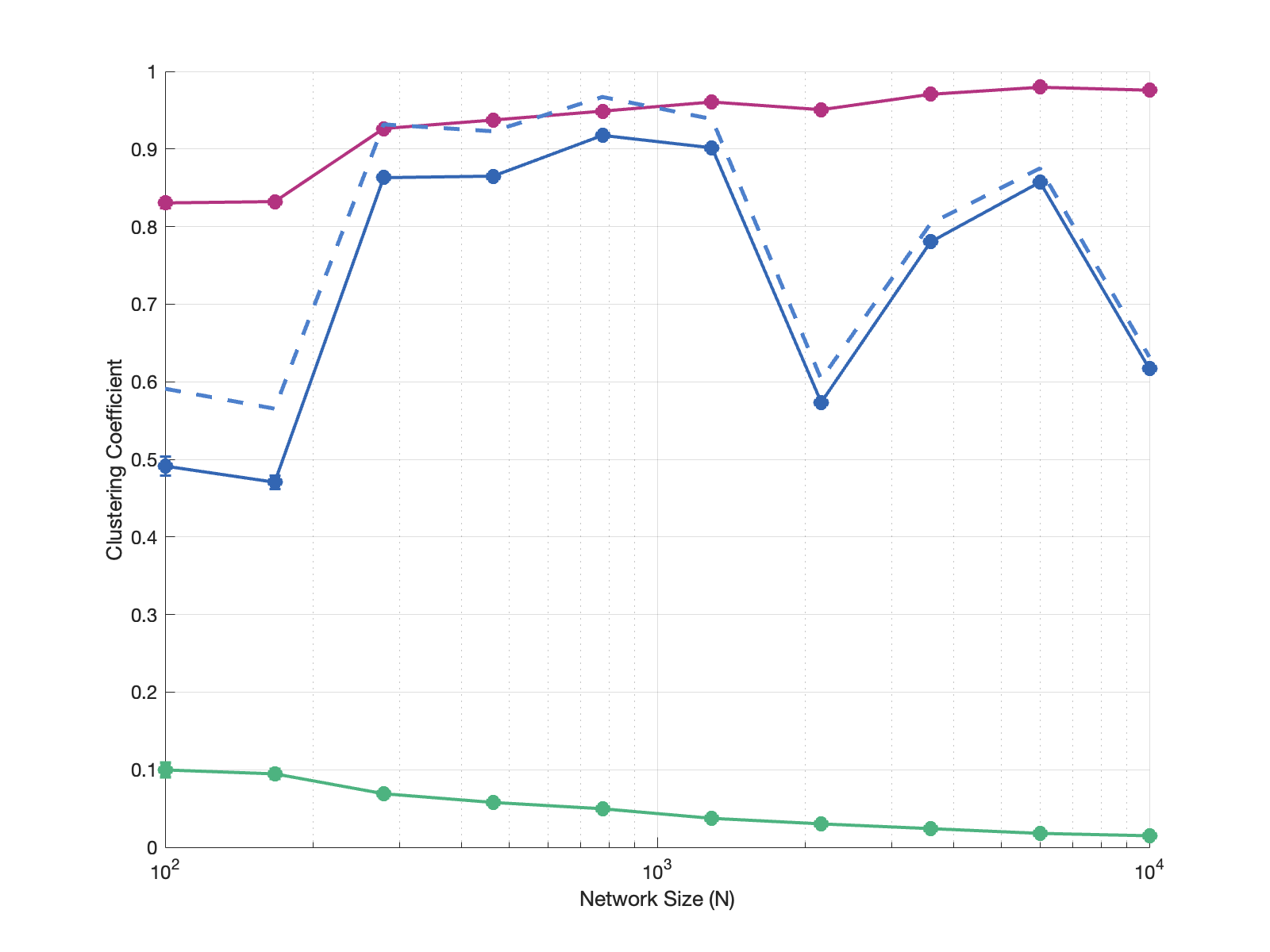}\\
    \includegraphics[width=0.49\linewidth, height= 0.32\linewidth]{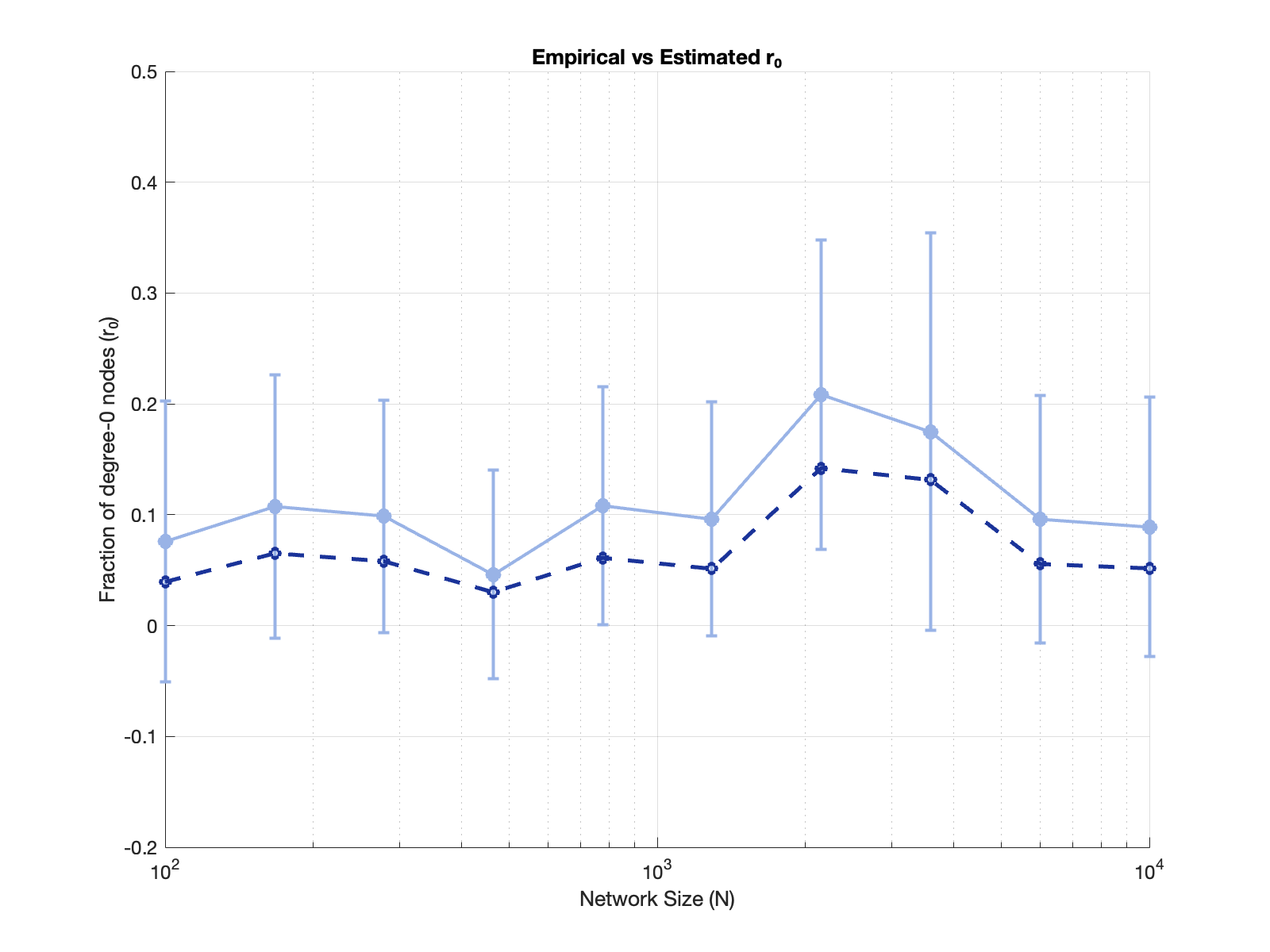}
    \includegraphics[width=0.49\linewidth, height= 0.32\linewidth]{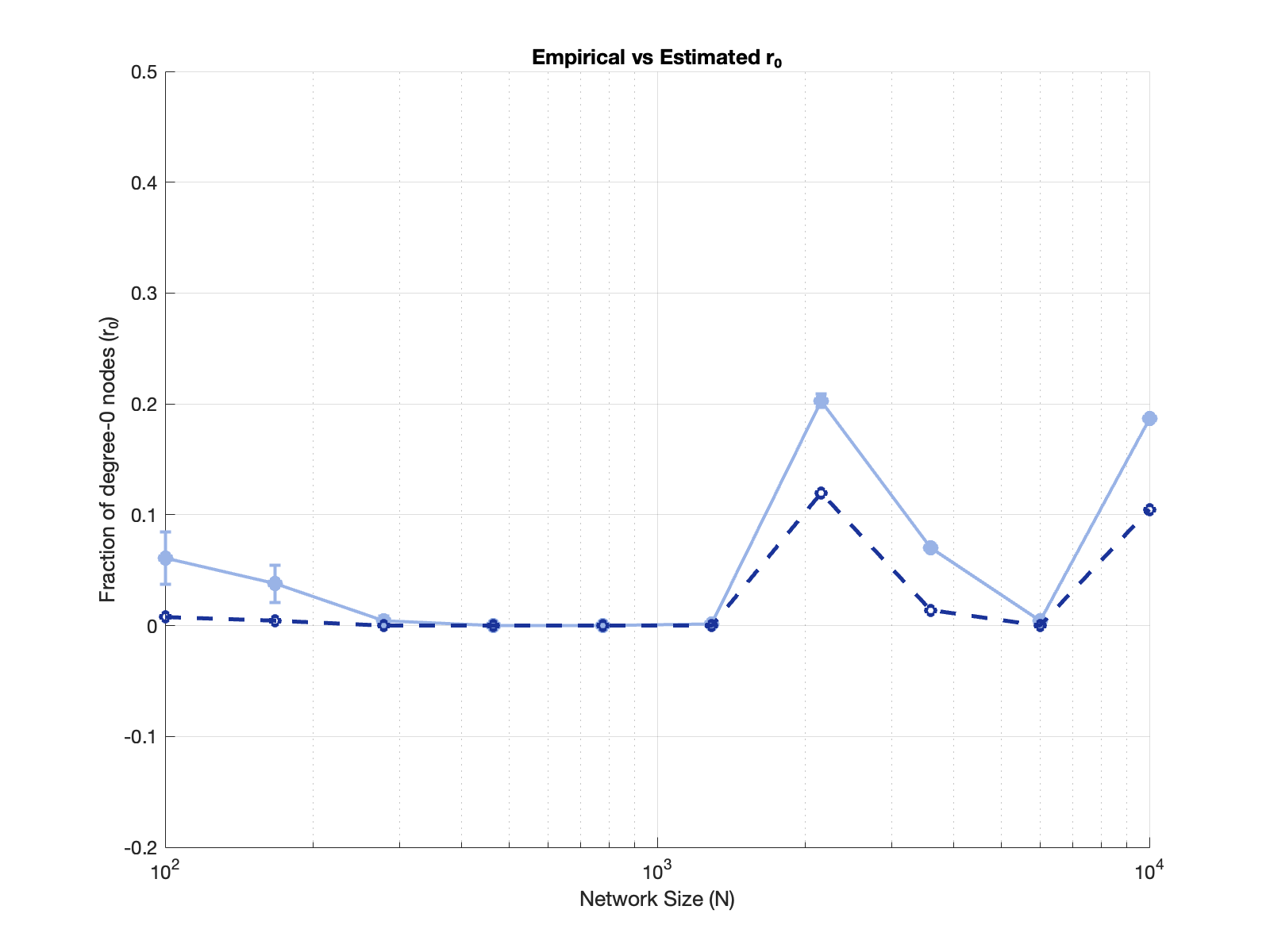}\\
    \includegraphics[width=0.49\linewidth, height= 0.32\linewidth]{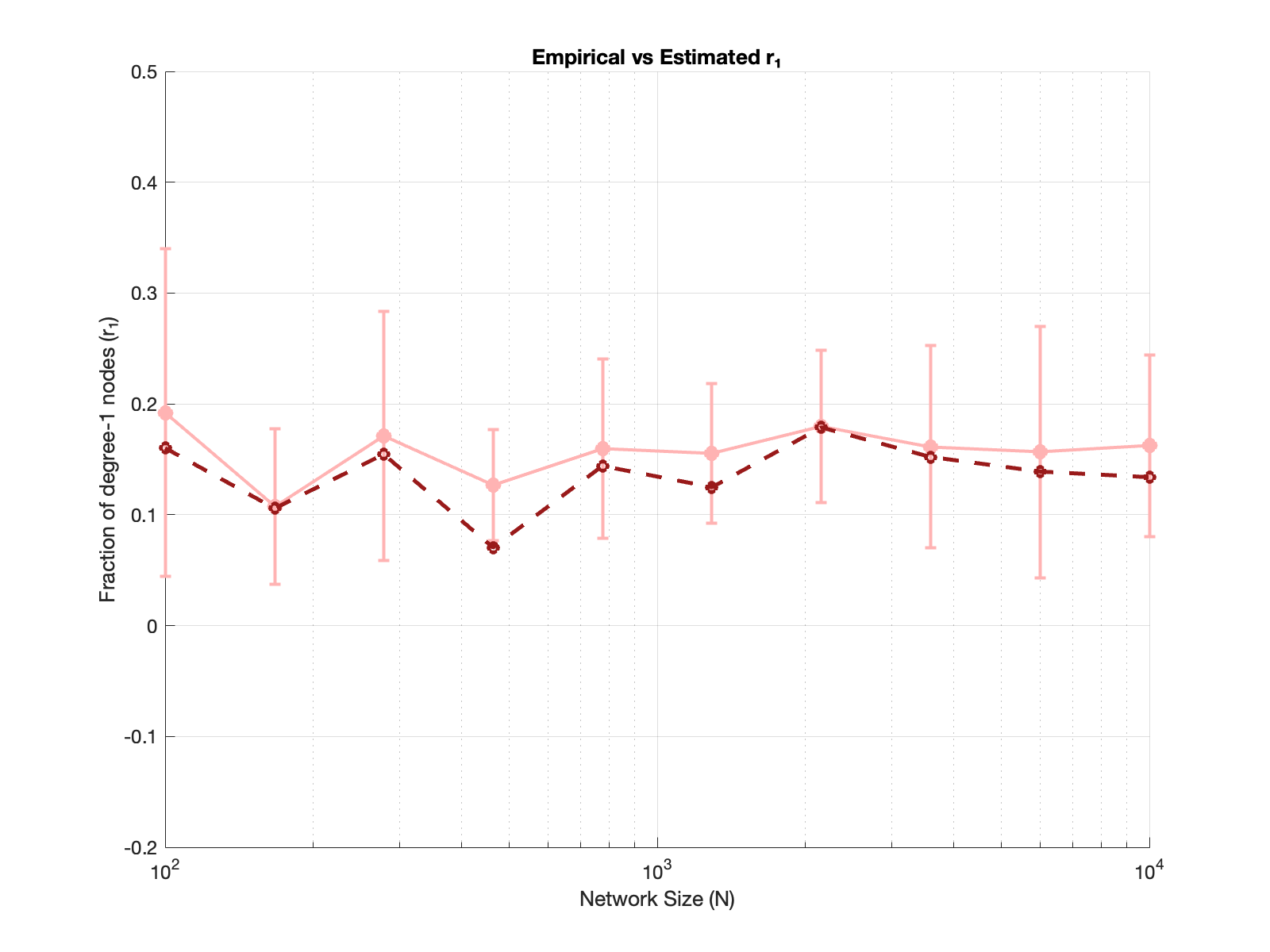}
    \includegraphics[width=0.49\linewidth, height= 0.32\linewidth]{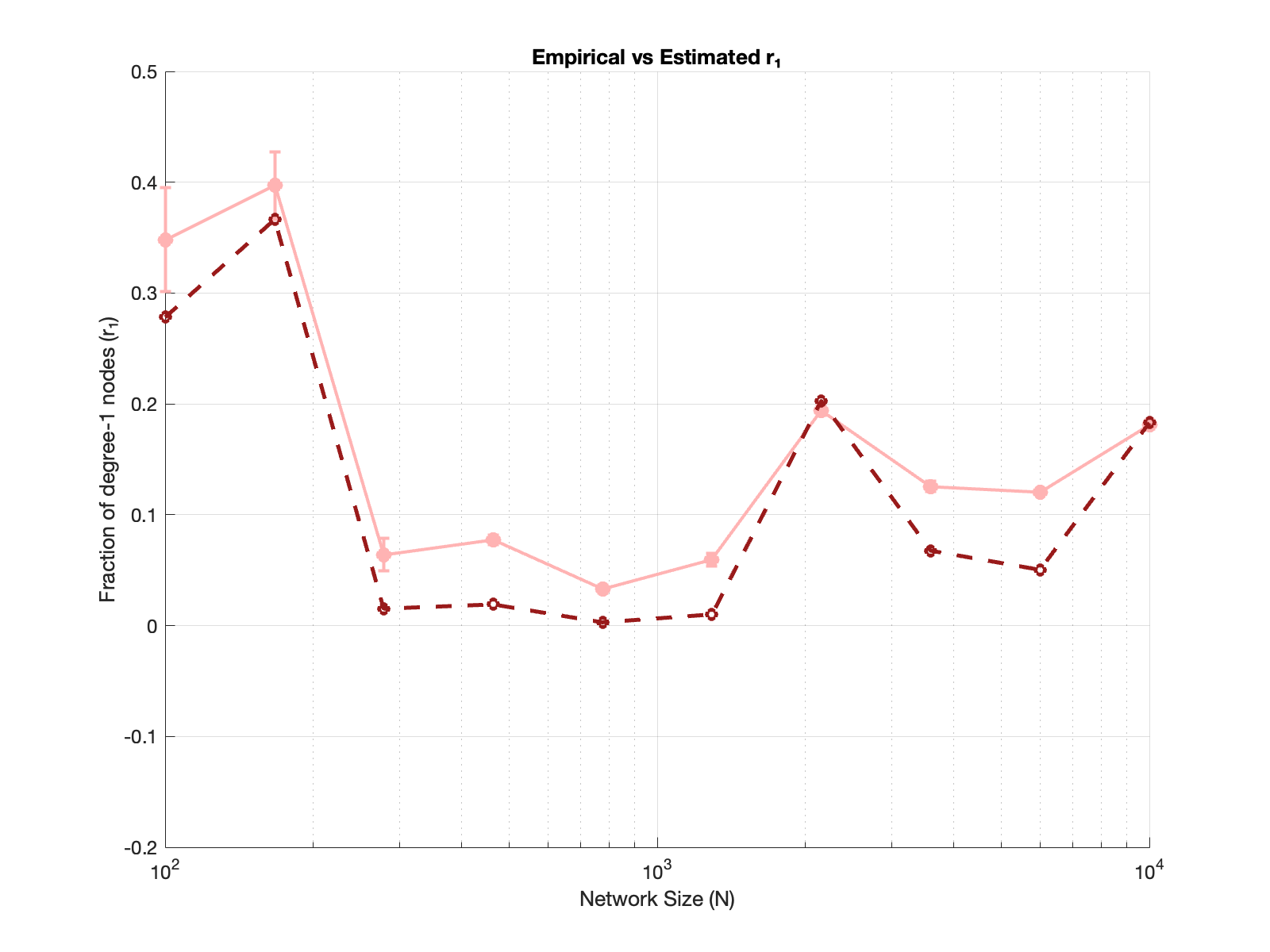}
    \caption{\textbf{Average local clustering coefficient $C$ and distance to $r_{0/1}$ versus network size $n$, with weights drawn from a Stable distribution (for $\alpha=0.5$).} 
    On the left, simulations are done by resampling weights every time an actual network is realized (sample of $10$), while on the left for each $n$ weights are extracted only once, and $10$ different adjacency matrices are realized from the same weights. On top
    we show the node-averaged local clustering coefficient ${C}$, both including (blue symbols) and excluding (purple symbols) nodes with degree $k=0,1$ (note that the latter evolves smoothly towards 1 with shrinking error bars as $n$ increases, while the former fluctuates with non-vanishing error bars, as a result of non-self-averaging).    
    The dashed blue line is $1$ minus the average of $r_{0/1}$ over realizations. Finally, in green the difference between $1-r_{0/1}$ and ${C}$ computed including nodes with $k<2$ (notice the shrinking error bars). In the middle, the light blue line with error bars is the actual value of $r_0$, while the dashed darker one is the approximation we derive in~\eqref{r0-approximation}. At the bottom, the light red line with error bars is the actual value of $r_1$, while the dashed darker one is the approximation we derive in~\eqref{r1-approximation}. }
    \label{fig:pan_05_stable}
\end{figure}

\begin{figure}[h!]
    \centering
    \includegraphics[width=0.49\linewidth, height= 0.32\linewidth]{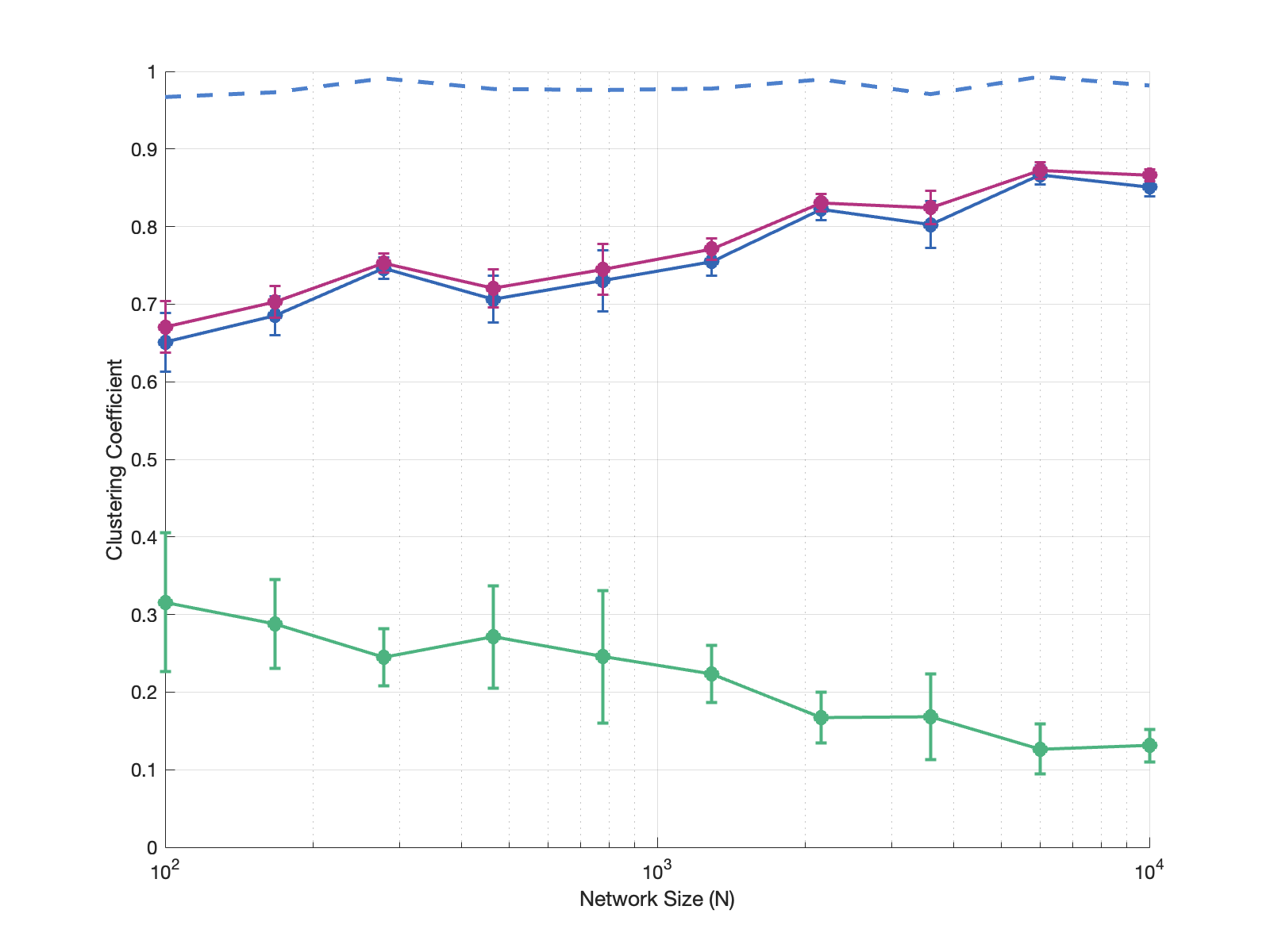}
    \includegraphics[width=0.49\linewidth, height= 0.32\linewidth]{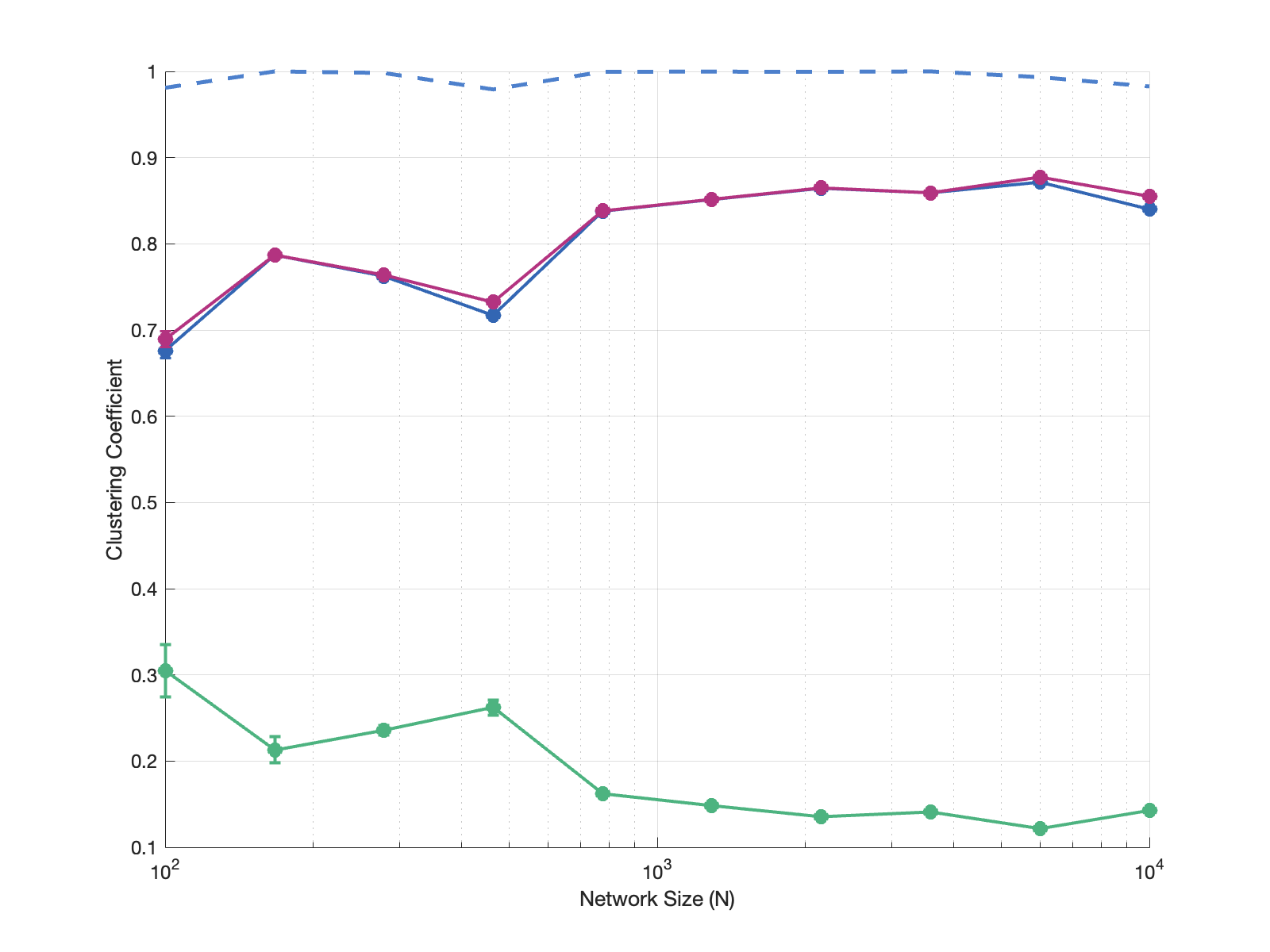}\\
    \includegraphics[width=0.49\linewidth, height= 0.32\linewidth]{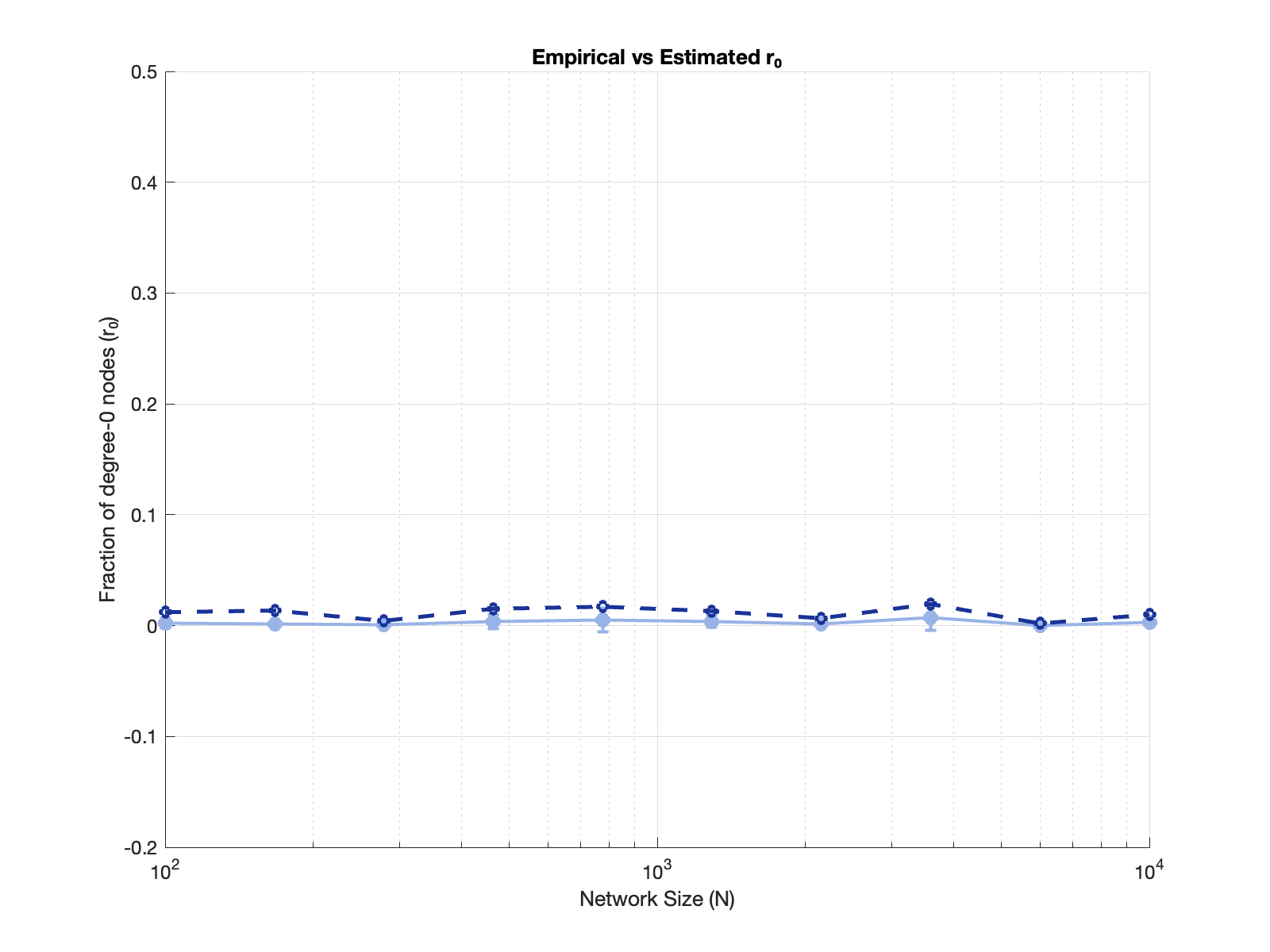}
    \includegraphics[width=0.49\linewidth, height= 0.32\linewidth]{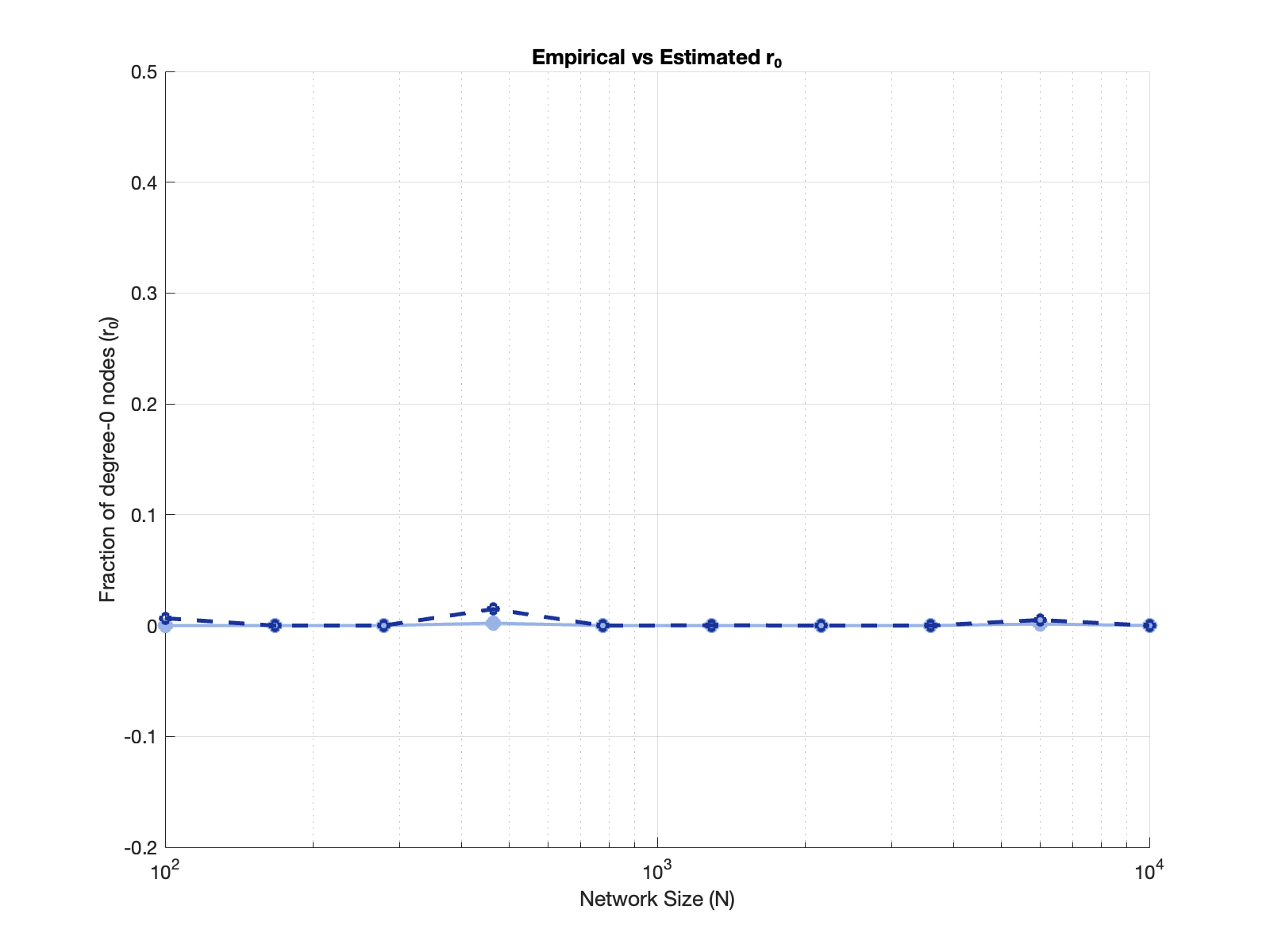}\\
    \includegraphics[width=0.49\linewidth, height= 0.32\linewidth]{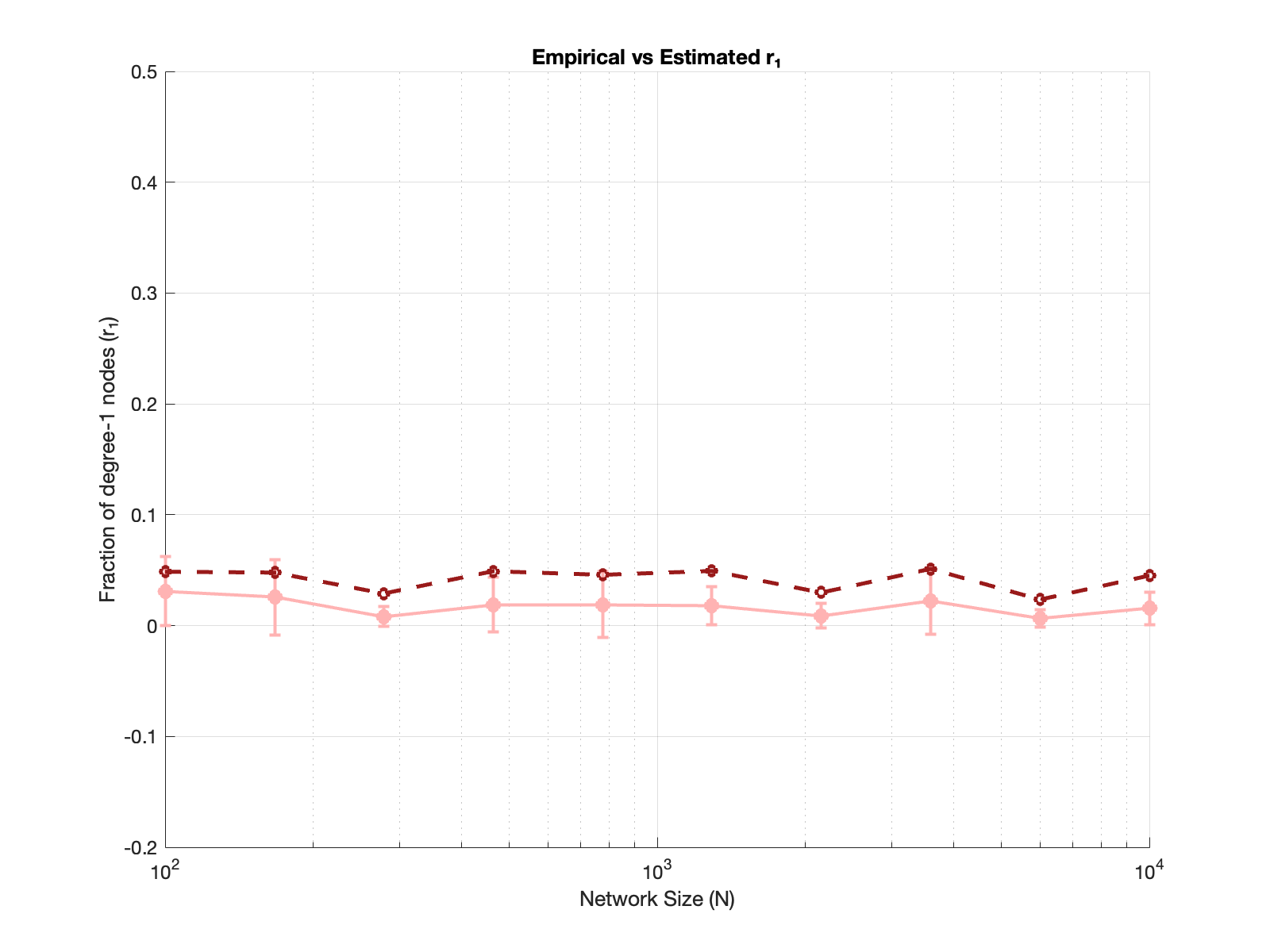}
    \includegraphics[width=0.49\linewidth, height= 0.32\linewidth]{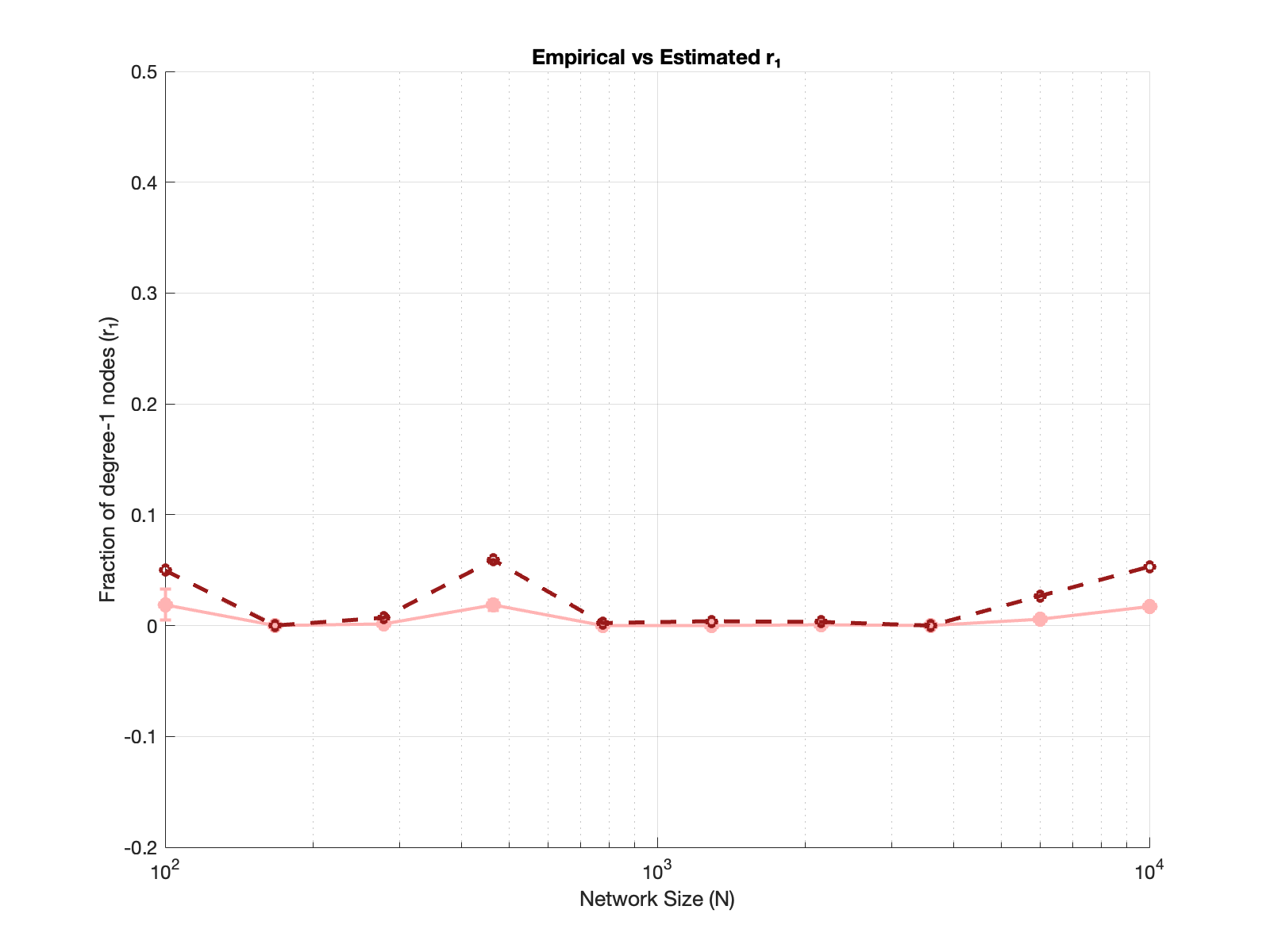}
    \caption{\textbf{Average local clustering coefficient $C$ and distance to $r_{0/1}$ versus network size $n$, with weights drawn from a Stable distribution (for $\alpha=0.7$).} 
    On the left, simulations are done by resampling weights every time an actual network is realized (sample of $10$), while on the left for each $n$ weights are extracted only once, and $10$ different adjacency matrices are realized from the same weights. On top
    we show the node-averaged local clustering coefficient ${C}$, both including (blue symbols) and excluding (purple symbols) nodes with degree $k=0,1$ (note that the latter evolves smoothly towards 1 with shrinking error bars as $n$ increases, while the former fluctuates with non-vanishing error bars, as a result of non-self-averaging).    
    The dashed blue line is $1$ minus the average of $r_{0/1}$ over realizations. Finally, in green the difference between $1-r_{0/1}$ and ${C}$ computed including nodes with $k<2$ (notice the shrinking error bars). In the middle, the light blue line with error bars is the actual value of $r_0$, while the dashed darker one is the approximation we derive in~\eqref{r0-approximation}. At the bottom, the light red line with error bars is the actual value of $r_1$, while the dashed darker one is the approximation we derive in~\eqref{r1-approximation}. }
    \label{fig:pan_07_stable}
\end{figure}

\end{document}